\theoremstyle{plain} 
\newtheorem{theorem}{\noindent\bf Theorem}[section] 
\newtheorem{lemma}[theorem]{\noindent\bf Lemma}
\newtheorem{proposition}[theorem]{\noindent\bf Proposition}
\theoremstyle{definition} 
\newtheorem{definition}[theorem]{\noindent\bf Definition}
\newtheorem{remark}[theorem]{\noindent\it Remark}
\newcommand{\T}[0]{\mathbb{T}}
\newcommand{\C}[0]{\mathbb{C}}
\newcommand{\R}[0]{\mathbb{R}}
\newcommand{\SympA}[0]{\operatorname{Symp}(\T^2,\sigma)}
\newcommand{\SympB}[0]{\operatorname{Symp}(\T^4,\omega)}
\newcommand{\LD}[2]{\langle\langle\,#1\,,\,#2\,\rangle\rangle}
\newcommand{\pdt}[0]{\left.\frac{\partial}{\partial t}\right|_{t=0}}
\numberwithin{equation}{section}
\theoremstyle{remark}
\begin{document}

\title[Symplectic gradient flow on the Hyperkähler torus]
{Local existence of the symplectic gradient flow on the HyperKähler four-dimensional flat torus} 
\author[L. Pinsard Morel]
{Lucas Pinsard Morel}

\keywords{ Symplectic group, Flat Tori, Moment map, Gradient Flow, HyperKähler manifolds, DeTurck trick.
}
\thanks{
}
\address{
Lucas Pinsard Morel:
Faculté des Sciences et des Techniques, 2 Chem. de la Houssinière, 44322 Nantes,
France
}
\email{lucas.morel1@univ-nantes.fr}

\begin{abstract}
Introducing a moment map whose zero locus is the group of symplectomorphisms of the real four-dimensional torus, we exhibit a gradient flow that can be made into a strictly parabolic flow by mean of a DeTurck trick (famously known for its use in the study of the Ricci flow), showing the local existence and regularity for the solutions of this flow and hence showing that the group of symplectomorphisms of the real four-dimensional torus is locally contractible. This work follows the ideas introduced by Yann Rollin in \cite{YR24}, even though the moment map picture comes from different considerations. 
\end{abstract}

\maketitle

\tableofcontents

\section{Introduction}
\subsection{Motivations}
The group of symplectomorphisms of a symplectic manifold is an important object in symplectic geometric, but very few things are known about its topology \cite[\S$10.4$]{MS16}. As an infinite dimensional Lie group, its local structure is completely described. However, except in very specific cases (see \cite{AM00}), its global topology remains a mystery. In particular, nothing is known about the global topology of the group of symplectomorphisms of the real four-dimensional torus. Thanks to Moser's lemma, it can be shown that the space of symplectic forms over the real two-dimensional torus ($\T^2$) is path connected. This implies that its group of symplectomorphisms for a fixed symplectic form is homotopic to a lattice (i.e. each of its connected components is contractible). This project aims to use a gradient flow to study the topology of this group, seeing it as a sort of Morse-Bott function. This gradient flow will come from a moment map whose zeros are exactly the symplectomorphisms. To realize this moment map, we use the natural flat HyperKähler structure that one can put on the real four-dimensional torus, and the fact that the degree two cohomology of this space can be completely described. We also first write this approach in the familiar and simpler setting of the real two-dimensional torus in order to build up the intuition and maybe see a certain partern appering that could lead to the study of any group of symplectomorphisms of even-dimensional tori.

\subsection{Notations}\label{nota}
An (Hyper-)Kähler torus is a quotient $V/\Lambda$, where
\begin{itemize}
    \item $V$ is a real two-dimensional vector space (resp. $4$-dimensional in the HyperKähler setting) ;
    \item $\Lambda$ is a lattice of $V$, and it acts on $V$ by translation ;
    \item $V$ is identified via a linear isomorphism to $\C$ (resp. the quaternionic space $\mathbb{H}$ in the HyperKähler setting).
\end{itemize}
\noindent
When the dimension is $2$, we will take $\T^2=\C/\Lambda$ for simplicity. And in dimension $4$, we will take $\T^4=\mathbb{H}/\Lambda$ with $\mathbb{H}=\{x_1+ix_2+jx_3+kx_4\,|\,(x_1,x_2,x_3,x_4)\in\R^4\}$ (recall the quaternionic relations $i^2=j^2=k^2=ijk=-1$).
\begin{remark}
\begin{enumerate}
    \item The quotient torus $\T^2$, as a quotient of the complex plane, possesses numerous structures coming from $\C$ that are preserved by the action by translation of the lattice $\Lambda$. In particular, it is a Riemannian manifold when endowed with the metric $g=|dz|^2$ on each fiber, and it is a Kähler manifold when endowed with the complex structure $I$, corresponding to the multiplication by $i$ in each fibers, and the $2$-form $\sigma=\frac{i}{2}dz\wedge d\Bar{z}$.
    \item The quotient torus $\T^4$ inherits: a Riemannian structure $g=|dq|^2$ inherited from $\mathbb{H}$ ; a complex structure $J$ corresponding on each tangent space to the multiplication on the right by $j$ on $\mathbb{H}$ (since we also have a complex structure acting by multiplication on the right by $j$ that we also want to write $J$, we will keep writing $JX$ or $XJ$ depending on which complex structure we think of) ; a Kähler structure $\omega\overset{\text{loc}}{=}dx_1\wedge dx_3+dx_2\wedge dx_4=g(\cdot J,\cdot)$ ; three other complex structures corresponding this time to the multiplication on the right by $i$, $j$ and $k$ on tangent spaces ; three other Kähler forms $\omega_I(\cdot,\cdot)=g(I\cdot,\cdot)$, $\omega_J(\cdot,\cdot)=g(J\cdot,\cdot)$ and $\omega_K(\cdot,\cdot)=g(K\cdot,\cdot)$ that together form a HyperKähler structure on $\T^4$. Note that each of the $\omega_\bullet$ for $\bullet\in\{I,J,K\}$ induces the opposite orientation on $\T^4$ than $\omega$.
\end{enumerate}
\end{remark}
\noindent
In what follows, we will be interested by the group of symplectomorphisms $\SympA$ of $\T^2$, and $\SympB$ of $\T^4$.

\subsection{Main Results}
In this subsection, we expose a version of our two main theorems. The gauge groups of symplectic Hamiltonian transformations 
$$
\begin{array}{ccc}
Ham(\T^2,\sigma) & \text{ and } & Ham(\T^4,\omega)
\end{array}
$$
\noindent
act by precomposition respectively on
$$
\begin{array}{ccc}
\operatorname{Diff}^+(\T^2)=\{f\in\operatorname{Diff}(\T^2)\,|\,f^*[\sigma]=[\sigma]\} & \text{ and } & \operatorname{Diff}_0(\T^4)=\{f\in\operatorname{Diff}(\T^4)\,|\,f\sim\operatorname{Id}\},
\end{array}
$$
\noindent
where $[\sigma]$ is the cohomological class associated to $\sigma$ and $f\sim\operatorname{Id}$ means that $f$ is homotopic to the identity. In the case of the two-dimensional torus, we show that $\operatorname{Diff}^+(\T^2)$ admits a Kähler structure as a Banach manifold as long as a Hamiltonian action by $Ham(\T^2,\sigma)$ (cf \cite{AT22} for the definitions related to Banach manifolds):
\begin{theorem}\label{thm12}
The space $\operatorname{Diff}^+(\T^2)$ admits a Kähler strucutre $(\operatorname{Diff}^+(\T^2),G,I)$, where $G$ is a scalar product compatible with the complex structure $I$. We denote by $\Omega$ the associated Kähler form. This form is invariant under the action by precomposition of $Ham(\T^2,\sigma)$. Moreover, the action of $Ham(\T^2,\sigma)$ is Hamiltonian with respect to $\Omega$, and the associated moment map is
$$
\mu:\left\{\begin{array}{ll}\operatorname{Diff}^+(\T^2)\rightarrow\mathcal{C}^\infty_0(\T^2)\\ f\mapsto-\frac{f^*\sigma}{\sigma}+1\end{array}\right.,
$$
\noindent
its zero locus is exactly the space of symplectomorphisms $\SympA$.
\end{theorem}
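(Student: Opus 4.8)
\emph{The concrete model.} The plan is to write down the Kähler structure, its invariance and the moment map explicitly, exploiting the flat parallelism of $\T^2=\C/\Lambda$, so that the only non‑formal ingredient is one integration by parts. Since $\T^2=\C/\Lambda$ is parallelizable, pulling back the flat trivialization $T\T^2\cong\T^2\times\C$ identifies, for every $f$, the tangent space $T_f\operatorname{Diff}^+(\T^2)=\Gamma(f^*T\T^2)$ with $\mathcal{C}^\infty(\T^2,\C)$; this is a global trivialization of $T\operatorname{Diff}^+(\T^2)$ in which the ``constant'' vector fields $\mathcal{V}_\xi\colon f\mapsto\xi$ pairwise commute, their flows being $f\mapsto f+t\xi$ in the affine structure of the target. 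In this frame I would define $I_fX:=iX$ (multiplication by $i$ in the target $\C$, i.e.\ the ambient complex structure $I$ of $\T^2$), $G_f(X,Y):=\int_{\T^2}\langle X,Y\rangle_\C\,\sigma$ (the $L^2$ inner product against the \emph{fixed} area form $\sigma$ of the source), and $\Omega_f(X,Y):=G_f(I_fX,Y)$; a one‑line computation in $\C$ gives the equivalent form $\Omega_f(X,Y)=\int_{\T^2}\sigma(X,Y)\,\sigma$, where inside the integral $\sigma$ is the (translation invariant) symplectic form of the target evaluated pointwise on $X(p),Y(p)$. Finally one observes that $\operatorname{Diff}^+(\T^2)$ is open in $\operatorname{Maps}(\T^2,\T^2)$ — the target regarded as the complex manifold $\C/\Lambda$ — which is a complex Banach manifold whose complex structure is exactly $I$, so $I$ is integrable.

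\emph{Kähler and invariance.} In the above frame $I$, $G$ and $\Omega$ are \emph{independent of $f$}. Hence $I$ is integrable (its Nijenhuis tensor vanishes, being evaluated on commuting vector fields), $G$ is a (weak) Hermitian metric compatible with $I$, and $\Omega$ is closed: for constant fields $X,Y,Z$ the functions $\Omega(Y,Z)$, $\Omega(X,Z)$, $\Omega(X,Y)$ are constant and all brackets vanish, so $d\Omega(X,Y,Z)=0$, whence $d\Omega=0$. This produces the Kähler Banach manifold $(\operatorname{Diff}^+(\T^2),G,I)$ with Kähler form $\Omega$. For the invariance, $\phi\in Ham(\T^2,\sigma)$ acts on $T_f$ by $X\mapsto X\circ\phi$, and since $\phi^*\sigma=\sigma$ the substitution $p\mapsto\phi(p)$ in the integral defining $G$ (hence $\Omega$) shows $\Omega$ and $G$ are preserved, while $I$ obviously is; so $Ham(\T^2,\sigma)$ acts by Kähler isometries.

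\emph{The moment map.} Put $\rho_f:=f^*\sigma/\sigma$. For $h$ in the Lie algebra $\mathcal{C}^\infty_0(\T^2)$ of $Ham(\T^2,\sigma)$, let $X_h$ be the Hamiltonian vector field, $\iota_{X_h}\sigma=dh$; differentiating $f\mapsto f\circ\varphi^{X_h}_t$ yields the induced vector field $\mathcal{X}_h(f)=df\circ X_h$ on $\operatorname{Diff}^+(\T^2)$. Writing a test vector at $f$ as $Y=df\circ\tilde Y$ with $\tilde Y$ a vector field on $\T^2$ (possible because $f$ is a diffeomorphism), and using that in real dimension two $(f^*\sigma)(X_h,\tilde Y)=\rho_f\,\sigma(X_h,\tilde Y)=\rho_f\,dh(\tilde Y)$, one finds $(\iota_{\mathcal{X}_h}\Omega)_f(Y)=\int_{\T^2}\rho_f\,dh(\tilde Y)\,\sigma$. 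On the other hand, with the $L^2$ pairing $\langle\nu,h\rangle=\int_{\T^2}\nu\,h\,\sigma$ one has $\langle\mu(f),h\rangle=\int_{\T^2}(1-\rho_f)h\,\sigma=-\int_{\T^2}\rho_f\,h\,\sigma$ (the constant term vanishes since $\int_{\T^2}h\,\sigma=0$); differentiating along $f_t=f\circ\psi_t$, where $\psi_t$ is the flow of $\tilde Y$, using $\frac{d}{dt}\big|_0 f_t^*\sigma=\mathcal{L}_{\tilde Y}(f^*\sigma)=d(\rho_f\,\iota_{\tilde Y}\sigma)$ and integrating by parts on the closed surface $\T^2$ — with $dh\wedge\iota_{\tilde Y}\sigma=\sigma(X_h,\tilde Y)\sigma=dh(\tilde Y)\sigma$ — gives $d\langle\mu,h\rangle_f(Y)=\int_{\T^2}\rho_f\,dh(\tilde Y)\,\sigma$, equal to $(\iota_{\mathcal{X}_h}\Omega)_f(Y)$. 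With the sign conventions just used (Hamiltonian field $\iota_{X_h}\sigma=dh$, action by precomposition $\phi\cdot f=f\circ\phi$, moment map equation $\iota_{\mathcal{X}_h}\Omega=d\langle\mu,h\rangle$) this reads $\mu(f)=-\rho_f+1=-\frac{f^*\sigma}{\sigma}+1$. Equivariance $\mu(f\circ\phi)=\mu(f)\circ\phi$ follows from $\rho_{f\circ\phi}=\rho_f\circ\phi$; $\mu$ indeed lands in $\mathcal{C}^\infty_0(\T^2)$ because $\int_{\T^2}\mu(f)\sigma=-\int_{\T^2}f^*\sigma+\int_{\T^2}\sigma=0$, which uses precisely $f^*[\sigma]=[\sigma]$; and $\mu(f)=0\iff f^*\sigma=\sigma\iff f\in\SympA$.

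\emph{Main obstacle.} The genuine difficulty lies not in these computations but in the infinite‑dimensional framework: one must fix a Banach model (e.g.\ $\mathcal{C}^k$ or Sobolev $H^s$ diffeomorphisms, in the spirit of \cite{AT22}) in which $\operatorname{Diff}^+(\T^2)$ is a Banach manifold, $\operatorname{Maps}(\T^2,\T^2)$ a complex Banach manifold, $\mu$ a smooth map, and $\Omega$ a \emph{weak} symplectic form — it injects $T_f$ into, but does not exhaust, its topological dual, so the moment map identity must be read as an equality of $1$‑forms and the dual of the Lie algebra interpreted as the smooth dual $\mathcal{C}^\infty_0(\T^2)$ under the $L^2$ pairing — while keeping the sign conventions (for $X_h$, for the induced action, and for the moment map) coherent throughout. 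Once that scaffolding is in place, every assertion of the theorem reduces to the short arguments above.
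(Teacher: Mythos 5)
Your proposal is correct and follows essentially the same route as the paper's own proof (Proposition~\ref{prop23} together with Theorem~\ref{thm24}): the $L^2$-type tensors $G$ and $\Omega$ obtained by integrating the pointwise structures against $\sigma$, invariance via the substitution $\varphi^*\sigma=\sigma$, and the moment map identity via the Cartan formula, the vanishing of $3$-forms on a surface, and Stokes. Your sign conventions for $X_h$ and for the moment map equation are each opposite to the paper's, so the two flips cancel and you land on the same $\mu=-\frac{f^*\sigma}{\sigma}+1$; your verification of $d\Omega=0$ and of the integrability of $I$ through the flat trivialization $T\T^2\cong\T^2\times\C$ is, if anything, more explicit than the paper's.
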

\noindent
We can now, using the ideas first introduced by Atiyah and Bott, consider the associated norm map 
$$
\phi:\left\{\begin{array}{ll}\operatorname{Diff}^+(\T^2)\rightarrow\R\\ f\mapsto\frac{1}{2}\|\mu(f)\|_{L^2}^2\end{array}\right..
$$
\noindent
The function $\phi$ may be seen as a Morse-Bott function, and the study of its gradient flow may contain topological information about its critical locus. Here is the result formulated in the setting of $\T^2$ using this approach :
\begin{theorem}\label{thm13}
For all $f\in\operatorname{Diff}^+(\T^2)$, there exists $\epsilon(f)>0$ and a unique family $\{f_t\}\subset\operatorname{Diff}^+(\T^2)$ (smooth in $t$) that solves the differential problem
\begin{equation*}
    \begin{cases}
    \frac{\partial}{\partial t}f_t=-\nabla\phi(f)\\
    f_0=f
    \end{cases}
\end{equation*}
\noindent
for all $t\in[0,\epsilon(f))$.
\end{theorem}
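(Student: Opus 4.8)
The plan is to make the flow explicit as a quasilinear second‑order evolution equation for $f_t\colon\T^2\to\T^2$, to observe that the $Ham(\T^2,\sigma)$‑invariance of $\phi$ and $\Omega$ noted after Theorem~\ref{thm12} forces this equation to be only weakly parabolic, to restore strict parabolicity by a DeTurck‑type modification, to solve the modified flow by the standard short‑time existence theory for strictly parabolic systems in the Banach setting of \cite{AT22}, and finally to recover a solution of the genuine gradient flow — with uniqueness — by undoing the gauge through an ODE on $\T^2$, exactly as in the DeTurck treatment of the Ricci flow. Concretely, the moment map identity from Theorem~\ref{thm12} gives, for $v\in T_f\operatorname{Diff}^+(\T^2)$,
\[
d\phi_f(v)=\LR{\mu(f)}{d\mu_f(v)}_{L^2}=\Omega_f\bigl(\rho_f(\mu(f)),v\bigr)=G_f\bigl(I\,\rho_f(\mu(f)),v\bigr),
\]
where $\rho_f(h)=df(X_h)$ is the infinitesimal action of $h\in\mathcal{C}^\infty_0(\T^2)$, $X_h$ the Hamiltonian vector field of $h$ on $\T^2$ and $I$ the Kähler complex structure of Theorem~\ref{thm12}. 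Hence $\nabla\phi(f)=I\bigl(df(X_{\mu(f)})\bigr)$ with $\mu(f)=1-\frac{f^*\sigma}{\sigma}$, and the flow reads $\partial_t f_t=-I\bigl(df_t(X_{\mu(f_t)})\bigr)$, which is second order in $f_t$ since $\mu$ is first order in $f$ and $h\mapsto X_h$ first order in $h$.

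Linearising the right‑hand side at $f=\operatorname{Id}$, and writing $u=f-\operatorname{Id}$, one finds the principal part $u\mapsto\nabla(\operatorname{div}u)$, whose symbol at a covector $\xi$ is the rank‑one negative semidefinite map $u\mapsto-\LR{\xi}{u}\xi$. So the flow is weakly parabolic but its symbol degenerates along the divergence‑free vector fields — in particular along the directions tangent to the $Ham(\T^2,\sigma)$‑orbits, which is exactly where $\phi$ is constant. Uniqueness therefore cannot hold without gauge‑fixing.

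To restore parabolicity I introduce a gauge‑fixing Hamiltonian $W(f)\in\mathcal{C}^\infty_0(\T^2)$: a natural first‑order expression in $f$ measuring the rotational part of $df$ relative to the background Kähler structure — the analogue of the DeTurck vector field $g^{ij}(\Gamma^k_{ij}-\bar{\Gamma}^k_{ij})$ of the Ricci–DeTurck flow — linearising to $W(\operatorname{Id}+u)\approx-\operatorname{curl}u$, and consider
\[
\partial_t\tilde f_t=-\nabla\phi(\tilde f_t)+d\tilde f_t\bigl(X_{W(\tilde f_t)}\bigr),\qquad\tilde f_0=f.
\]
The added term contributes, to leading order, $\nabla^{\perp}(\operatorname{curl}u)$, and by the Hodge identity $\Delta u=\nabla(\operatorname{div}u)+\nabla^{\perp}(\operatorname{curl}u)$ on the flat torus the linearised modified flow becomes $\partial_t u=\Delta u$; its principal symbol is $-|\xi|^2\operatorname{Id}$, so the modified flow is a strictly parabolic quasilinear system. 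Verifying this — that is, choosing $W(f)$ intrinsically so that the symbol is completed to $-|\xi|^2\operatorname{Id}$ while $W(f)$ stays a zero‑mean Hamiltonian, so that the modification remains inside the $Ham(\T^2,\sigma)$‑directions — is the crux of the argument.

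Granting this, standard short‑time existence and regularity for quasilinear strictly parabolic systems on the compact manifold $\T^2$, carried out in the Hölder (or Sobolev) scale underlying the Banach‑manifold framework of \cite{AT22}, yields $\epsilon(f)>0$ and a unique $\tilde f_t$ on $[0,\epsilon(f))$, smooth in $t$ (and, by parabolic bootstrapping, in the space variables); $\tilde f_t$ remains a diffeomorphism (an open condition) with $\tilde f_t^*[\sigma]=[\sigma]$ (since both $\nabla\phi$ and $d\tilde f_t(X_{W})$ are tangent to $\operatorname{Diff}^+(\T^2)$, the latter being the infinitesimal action of an exact Hamiltonian), hence $\tilde f_t\in\operatorname{Diff}^+(\T^2)$. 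One then solves the non‑autonomous ODE $\partial_t\psi_t=-X_{W(\tilde f_t)}\circ\psi_t$, $\psi_0=\operatorname{Id}$, on $\T^2$; as each $X_{W(\tilde f_t)}$ is Hamiltonian with zero‑mean Hamiltonian, $\psi_t\in Ham(\T^2,\sigma)$. Setting $f_t:=\tilde f_t\circ\psi_t$, the chain rule together with the $Ham(\T^2,\sigma)$‑equivariance of $\nabla\phi$ (which follows from the invariance of the Kähler structure of Theorem~\ref{thm12} and of $\phi$) makes the two $X_{W}$ contributions cancel, so that $\partial_t f_t=-\nabla\phi(f_t)$ with $f_0=f$ and $f_t\in\operatorname{Diff}^+(\T^2)$, smooth in $t$; and given two solutions of the gradient flow issued from $f$, conjugating each by the solution of this ODE produces two solutions of the strictly parabolic DeTurck flow with the same initial datum, which must then coincide, so $f_t$ is unique. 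The main obstacle is the strict‑parabolicity step above; a secondary technical point is selecting the function space in which both the parabolic existence theorem and the ODE for $\psi_t$ apply and the regularity bootstraps to $\mathcal{C}^\infty$.
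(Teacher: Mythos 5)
Your overall route is exactly the paper's: make the flow explicit, observe that the linearised gradient is only weakly parabolic because its symbol degenerates along the gauge directions, add a DeTurck term $(f_t)_*W(f_t)$ with $W(f)$ a Hamiltonian vector field built from the lift $F$ of $f$, solve the resulting strictly parabolic equation, and undo the gauge by composing with the flow of $-W$. Your candidate for $W$ is also the right one: the paper (Definition~\ref{def134}) takes $W(f)=-\sharp_\sigma dd^*\iota_F\sigma_0$, whose Hamiltonian function $d^*\iota_F\sigma$ is, up to the musical isomorphisms, precisely the ``rotational part of $df$'' you propose, and at $f=\operatorname{Id}$ the combined principal part is indeed the Hodge Laplacian $dd^*+d^*d$ on $1$-forms, which is your $\nabla\operatorname{div}+\nabla^{\perp}\operatorname{curl}$ identity.

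The step you defer is, however, a genuine gap, and it is slightly larger than your sketch suggests. Linearising only at $f=\operatorname{Id}$ is not sufficient: short-time existence for the quasilinear system requires ellipticity of the linearisation at the arbitrary initial datum $f\in\operatorname{Diff}^+(\T^2)$, where the principal part is $-f_*\sharp_\sigma\bigl((H_f)^2d^*d+dd^*\bigr)\iota_Yf^*\sigma$ with $H_f=f^*\sigma/\sigma$. Because the two summands carry different weights, the symbol is not $-|\xi|^2\operatorname{Id}$ and the Hodge identity does not literally complete it to the Laplacian; one must compute the symbol matrix and its determinant, which equals $H_f^2(\xi_x^2+\xi_y^2)^2$ and is nonzero only because $H_f>0$ for orientation-preserving $f$ (this is the content of Proposition~\ref{prop213}, which your proof cannot skip). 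A secondary soft spot is uniqueness: to send two gradient-flow solutions back to DeTurck solutions you must solve for the gauge $\chi_t$ in an equation where $W$ is evaluated along the unknown composition $\tilde f_t\circ\chi_t$, which is a first-order problem in $\chi_t$ rather than the plain ODE you invoke; this is the standard subtlety of the DeTurck uniqueness argument and deserves at least a remark.
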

\begin{remark}
Note that no comment has been made regarding the regularity of the initial condition nor the solution, which is technically important. We refer to \ref{thmA} for the technical details.
\end{remark}
\noindent
The results obtained in the case of $\T^4$ are somewhat quite similar but tainted with this extra HyperKähler structure. 
\begin{theorem}\label{thm15}
The space $\operatorname{Diff}_0(\T^4)$ admits a HyperKähler structure
$$
(\operatorname{Diff}_0(\T^4),G,I,J,K),
$$
\noindent
where $G$ is a scalar product compatible with all the complex structures $I$, $J$ et $K$ (whether they act on the left or on the right). We denote by $\Omega_I$, $\Omega_J$ et $\Omega_K$ the Kähler forms associated to $I$, $J$ and $K$ acting on the right. This HyperKähler structure is invariant under the action by precomposition of $Ham(\T^4,\omega)$. Moreover, the action of $Ham(\T^4,\omega)$ is Hamiltonian with respect to each $\Omega_\bullet$, the moment map associated to $\Omega_\bullet$ is
$$
\mu_\mathcal{\bullet}(f):=-\frac{1}{\text{vol}}(f^*\omega_{\bullet}\wedge\omega)
$$
\noindent
where $\operatorname{vol}=-\frac{1}{2}\omega\wedge\omega\underset{loc}{=}dx_1\wedge dx_2\wedge dx_3\wedge dx_4$ is the volume form associated to $\omega$ ; the zeros of $\underline{\mu}:=(\mu_I,\mu_J,\mu_K)$ are exactly the symplectomorphims of $\SympB$.
\end{theorem}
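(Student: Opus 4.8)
\smallskip

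\noindent\emph{Proof strategy.} The plan is to transplant the moment--map construction behind Theorem~\ref{thm12} to the HyperKähler setting, in the spirit of \cite{YR24}; the only step that genuinely uses the quaternionic structure of $\T^4$ is the identification of the zero locus, which I would carry out last. Since $\T^4=\mathbb{H}/\Lambda$ is flat, $T\T^4$ is canonically trivial ($T_p\T^4\cong\mathbb{H}$), and lifting to the universal cover writes any $f\in\operatorname{Diff}_0(\T^4)$ as $f(p)=p+u(p)$ with $u\colon\T^4\to\mathbb{H}$; thus $\operatorname{Diff}_0(\T^4)$ is an open subset of the Banach space $\mathcal{C}^{k,\alpha}(\T^4,\mathbb{H})$ (see \cite{AT22}), and $T_f\operatorname{Diff}_0(\T^4)\cong\mathcal{C}^{k,\alpha}(\T^4,\mathbb{H})$ for every $f$. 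I would equip this tangent space with the flat $L^2$ metric $G(X,Y)=\int_{\T^4}g(X,Y)\,\operatorname{vol}$ and the pointwise complex structures $I,J,K$ coming from $\mathbb{H}$; since $G$ has constant coefficients and $I,J,K$ are parallel and satisfy the quaternion relations, $(\operatorname{Diff}_0(\T^4),G,I,J,K)$ is (flat) HyperKähler, each $\Omega_\bullet:=G(\bullet\,\cdot,\cdot)$ is closed, and $\Omega_\bullet|_f(X,Y)=\int_{\T^4}\omega_\bullet(X,Y)\,\operatorname{vol}$.

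Next I would handle the $Ham(\T^4,\omega)$-action $f\mapsto f\circ h$: its differential on tangent vectors is $X\mapsto X\circ h$, which commutes with the pointwise operators $I,J,K$ and preserves $G$ because $h^*\omega=\omega$ forces $h^*\operatorname{vol}=\operatorname{vol}$; hence the HyperKähler structure is $Ham(\T^4,\omega)$-invariant. For $H\in\mathcal{C}^\infty_0(\T^4)=\mathrm{Lie}\,Ham(\T^4,\omega)$ the induced vector field on $\operatorname{Diff}_0(\T^4)$ is $f\mapsto df(X_H)$ with $\iota_{X_H}\omega=dH$; writing a variation of $f$ as $g_t\circ f$ with $\dot g_0=V$ a vector field on $\T^4$ and using $\pdt f_t^*\omega_\bullet=d(f^*\iota_V\omega_\bullet)$ (Cartan's formula together with $d\omega_\bullet=d\omega=0$), Stokes' theorem combined with the pointwise identities $\omega_\bullet(X,Y)=g(\bullet X,Y)$, $\alpha\wedge\iota_X\operatorname{vol}=\alpha(X)\operatorname{vol}$ and $\iota_{X_H}\omega\wedge\omega=-\iota_{X_H}\operatorname{vol}$ turns $\pdt\int_{\T^4}\mu_\bullet(f_t)\,H\,\operatorname{vol}$ into $\Omega_\bullet|_f\!\big(df(X_H),\dot f_0\big)$, up to the sign dictated by the conventions of \S\ref{nota}; equivariance $\mu_\bullet(f\circ h)=h^*\mu_\bullet(f)$ is immediate from $h^*\omega=\omega$. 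This reproduces, \emph{mutatis mutandis}, the argument behind Theorem~\ref{thm12}.

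For the zero locus I would fix on $\mathbb{H}$ the flat metric and the orientation $\operatorname{vol}=-\tfrac12\omega\wedge\omega$, which is precisely the orientation for which each $\omega_\bullet$ is Kähler; thus $\omega_I,\omega_J,\omega_K$ form a basis of the three-dimensional space of self-dual constant $2$-forms while $\omega$ --- being Kähler for the opposite orientation --- is anti-self-dual, and in particular $\omega_\bullet\wedge\omega=0$ pointwise. If $f\in\SympB$ then $f^*\omega_\bullet\wedge\omega=f^*\omega_\bullet\wedge f^*\omega=f^*(\omega_\bullet\wedge\omega)=0$, so $\underline\mu(f)=0$. Conversely, if $\underline\mu(f)=0$, i.e. $f^*\omega_\bullet\wedge\omega=0$ for $\bullet\in\{I,J,K\}$, then pulling back by $f^{-1}$ gives $\omega_\bullet\wedge\tau=0$ with $\tau:=(f^{-1})^*\omega$; since $\omega_\bullet\wedge\tau=\langle\omega_\bullet,\ast\tau\rangle\operatorname{vol}$ and the $\omega_\bullet$ span the self-dual forms, this forces $\tau$ to be anti-self-dual. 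As $f^{-1}\sim\operatorname{Id}$ one has $[\tau]=[\omega]$, so $\tau-\omega$ is closed, exact and anti-self-dual; a closed anti-self-dual form is co-closed, hence harmonic, and a harmonic exact form on the compact $\T^4$ vanishes, whence $\tau=\omega$ and $f\in\SympB$. Therefore $\underline\mu^{-1}(0)=\SympB$.

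The main obstacle will be the moment--map computation: choosing the right functional-analytic framework (which Banach completion, smoothness of the $Ham(\T^4,\omega)$-action, and the fact that the $1$-form defining $\mu_\bullet$ is genuinely exact and not merely closed, so that $\mu_\bullet$ actually exists) and carrying the Cartan/Stokes manipulation through with enough care that the density which appears is exactly $-\tfrac1{\operatorname{vol}}(f^*\omega_\bullet\wedge\omega)$ and not its opposite. The construction of the HyperKähler structure and the analysis of the zero locus are essentially formal once the flat trivialization and the self-dual/anti-self-dual splitting of $2$-forms on $\T^4$ are in place.
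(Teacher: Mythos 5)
Your strategy is correct and follows essentially the same route as the paper: the flat $L^2$ HyperKähler structure on the trivialized tangent spaces (Proposition~\ref{prop35}), invariance and the Cartan--Stokes computation of $\mu_\bullet$ (Theorem~\ref{thm36}), and the identification of the zero locus via the splitting $\Omega^2(\T^4)=\Omega^2_+\oplus\Omega^2_-$ with $\{\omega_I,\omega_J,\omega_K\}$ spanning one summand and $\omega$ lying in the other (Lemma~\ref{lem37} and Theorem~\ref{thm38}). The only differences are cosmetic: you call the span of the $\omega_\bullet$ the self-dual part where Lemma~\ref{lem37} labels it anti-self-dual (a pure orientation-convention issue the paper itself is not fully consistent about), and you conclude ``exact $+$ (anti-)self-dual $\Rightarrow$ closed and co-closed $\Rightarrow$ harmonic $\Rightarrow$ zero'' where the paper runs the equivalent Stokes computation $\int_{\T^4}d\alpha\wedge d\alpha=0=\pm\|d\alpha\|^2$.
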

\noindent
Introducing the same norm function as before
$$
\phi:\left\{\begin{array}{ll}\operatorname{Diff}_0(\T^4)\rightarrow\R\\ f\mapsto\frac{1}{2}\left(\|\mu_I(f)\|^2+\|\mu_J(f)\|^2+\|\mu_K(f)\|^2\right)\end{array}\right.
$$
\noindent
we study its associated gradient flow and obtain
\begin{theorem}\label{thm16}
For all $f\in\operatorname{Diff}_0(\T^4)$, there exists $\epsilon(f)>0$ and a unique family $\{f_t\}\subset\operatorname{Diff}_0(\T^4)$ (smooth in $t$) that solves the differential problem
\begin{equation*}
    \begin{cases}
    \frac{\partial}{\partial t}f_t=-\nabla\phi(f_t)\\
    f_0=f
    \end{cases}
\end{equation*}
\noindent
for all $t\in[0,\epsilon(f))$.
\end{theorem}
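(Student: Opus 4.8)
The plan is to establish local existence for the gradient flow by following the DeTurck strategy outlined in the introduction: the naive gradient flow equation $\partial_t f_t = -\nabla\phi(f_t)$ is degenerate (it is only weakly parabolic because the gauge group $Ham(\T^4,\omega)$ acts by precomposition preserving $\phi$, so the symbol of the linearization has a large kernel coming from Hamiltonian vector fields), and we must compose with a suitable family of gauge transformations to obtain a genuinely strictly parabolic system to which standard short-time existence theory applies. First I would compute explicitly the gradient $\nabla\phi$ with respect to the HyperKähler metric $G$ on $\operatorname{Diff}_0(\T^4)$ described in Theorem~\ref{thm15}; by the moment-map formalism the differential of $\phi$ at $f$ in a direction $X$ is $\sum_{\bullet}\LR{\mu_\bullet(f)}{d\mu_\bullet(X)}$, and using that $d\mu_\bullet$ is (up to the complex structure $\bullet$) the infinitesimal action of $Ham(\T^4,\omega)$, one gets $\nabla\phi(f) = \sum_{\bullet} \bullet\, P_f(\operatorname{grad}\mu_\bullet(f))$ for the appropriate second-order operator, exhibiting the second-order part as a Laplace-type operator on $f$ but with a kernel along the gauge orbit.

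Next, I would introduce the DeTurck vector field: define a time-dependent Hamiltonian (or, more generally, a time-dependent vector field on $\T^4$) $W_t = W(f_t)$ — built from the difference between the connection/Laplacian induced by $f_t^*$ and a fixed background one, exactly mirroring the construction for Ricci flow — and replace the flow by
\begin{equation*}
\begin{cases}
\partial_t f_t = -\nabla\phi(f_t) + df_t(W_t),\\
f_0 = f.
\end{cases}
\end{equation*}
The point is to choose $W_t$ so that the extra term $df_t(W_t)$ cancels precisely the degenerate directions in the symbol; then I would verify that the symbol of the right-hand side, linearized at any $f$, is a negative-definite multiple of the identity on the full tangent space, i.e. the modified system is strictly parabolic (indeed a quasilinear parabolic system of the form $\partial_t f_t = a^{ij}(f_t)\partial_i\partial_j f_t + (\text{lower order})$ with $(a^{ij})$ positive definite). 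Applying the standard short-time existence and uniqueness theorem for quasilinear strictly parabolic systems on the compact manifold $\T^4$ (e.g. in Hölder spaces $\mathcal{C}^{k,\alpha}$, with the regularity bootstrap giving smoothness in $t$ for $t>0$ and propagating the regularity of $f$) yields a unique solution $\tilde f_t$ of the DeTurck-modified flow on some $[0,\epsilon(f))$.

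Finally, I would undo the DeTurck trick: solve the ODE $\partial_t \psi_t = -W_t\circ\psi_t$ with $\psi_0 = \operatorname{Id}$ for the generated family of diffeomorphisms of $\T^4$ — and here one must check that $\psi_t$ actually stays in $Ham(\T^4,\omega)$, or at least in $\operatorname{Diff}_0(\T^4)$, which is where the specific structure of $W_t$ (chosen to be symplectic/Hamiltonian for the relevant form) matters — and set $f_t := \tilde f_t\circ\psi_t$. A direct computation using the chain rule shows $f_t$ solves the original gradient flow; uniqueness follows by running the argument in reverse, since any solution of the original flow can be gauge-transformed into a solution of the strictly parabolic DeTurck flow, where uniqueness holds. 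I expect the main obstacle to be the second step: producing a DeTurck vector field whose correction term exactly neutralizes the gauge degeneracy while keeping the flow inside $\operatorname{Diff}_0(\T^4)$ (the analogue of checking that the DeTurck diffeomorphisms preserve the relevant geometric category), together with the bookkeeping needed to confirm that the resulting principal symbol is genuinely elliptic in all components rather than merely in the directions transverse to the orbit — the HyperKähler triple $(\mu_I,\mu_J,\mu_K)$ making the symbol computation more delicate than in the two-dimensional case of Theorem~\ref{thm13}.
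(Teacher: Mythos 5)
Your roadmap coincides with the paper's: add a DeTurck term $df_t(W_t)$ to kill the gauge degeneracy, invoke short-time existence for the resulting strictly parabolic system in H\"older spaces, then undo the gauge by integrating $-W_t$ and composing. However, the two items you defer as ``the main obstacle'' are precisely where the substance of the proof lies, and as written the proposal does not supply them. First, the paper does not linearize the triple $(\mu_I,\mu_J,\mu_K)$ directly: since $\{\omega_I,\omega_J,\omega_K\}$ is an orthogonal basis of $\Omega^2_-(\T^4)$ (Lemma~\ref{lem37}), the triple is replaced by the single map $\Tilde{\mu}(f)=\left((f^{-1})^*\omega\right)^-$ with the same zero locus, which yields the closed-form gradient $\nabla\phi(f)=-f_*H_f^2\,\sharp_\omega f^*d^*\left((f^{-1})^*\omega\right)^-$ (Proposition~\ref{prop311}); without this repackaging the ``delicacy'' you worry about in the hyperK\"ahler symbol computation does not resolve itself. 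Second, the DeTurck field must be exhibited concretely: the paper takes $W(f)=-\sharp_\omega f^*dd^*\iota_{F^{-1}}\omega$, where $F$ is a lift of $f$ to $\R^4$ regarded as a vector field via the trivialization of $T\T^4$ (Definition~\ref{def312}). This choice is Hamiltonian, so the compensating isotopy stays in $Ham(\T^4,\omega)$ (answering your concern about remaining in the right category), and its linearization contributes exactly the missing second-order block $-\sharp_\omega f^*dd^*\iota_Y(f^{-1})^*\omega$, so that the principal part becomes $-f_*\sharp_\omega f^*\left(H_f^2d^*d^-+dd^*\right)\iota_Y(f^{-1})^*\omega$.

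One further correction: the principal symbol of $H_f^2d^*d^-+dd^*$ is \emph{not} a negative-definite multiple of the identity on $T^*_p\T^4$, as you predict; it is a genuinely non-diagonal symmetric matrix whose invertibility must be checked by hand --- the paper computes $\det\sigma_L(\xi)$ explicitly and finds a positive multiple of $|\xi|^8$ (Theorem~\ref{thm316}). Because $d^-$ projects onto only half of $\Omega^2(\T^4)$, it is not a priori obvious that adding $dd^*$ restores ellipticity in all four components; this verification cannot be waved through as ``bookkeeping.'' So: right strategy, but the explicit gauge field and the symbol determinant are the proof, and both are absent.
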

\noindent
A more precise formulation of this result can be found at the end of the second part of this paper \ref{thmB}.

\section{Case of $\T^2$}
\subsection{Setting and properties}\label{settT2}
\noindent
In this section, we fix $\T^2=\C/\Lambda$ and $\sigma\overset{\text{loc}}{=}\frac{i}{2}dz\wedge d\Bar{z}$. Consider the Hölder topologie on the group of diffeomorphisms over a compact manifold. Endow the following spaces with this topology:
\begin{definition}
\label{def21}
Denote by $\mathcal{M}_{k,\alpha}$ the space of $\mathcal{C}^{k,\alpha}$ diffeomorphisms of $\T^2$ preserving the orientation and $\mathcal{G}_{k,\alpha}:=Ham^{(k,\alpha)}(\T^2,\sigma)\subset\SympA^{(k,\alpha)}$ the subgroup of $\mathcal{C}^{k,\alpha}$ symplectic Hamiltonian transformations. This latter subgroup acts on $\mathcal{M}_{k,\alpha}$ by precomposition (as long as $\SympA^{(k,\alpha)}$). Those spaces can be respectively considered as a $\mathcal{C}^{k,\alpha}$ Banach manifold and a $\mathcal{C}^{k,\alpha}$ Banach Lie group (cf \cite{AT22}). The tangent space to an element $f\in\mathcal{M}_{k,\alpha}$ is the vector space of global $\mathcal{C}^{k,\alpha}$ section of the pullback bundle $f^*T\T^2$, i.e. the $\mathcal{C}^{k,\alpha}$ applications $u$ that make the following diagram commute:
$$
\xymatrix{
    & T\T^2=\T^2\times\R^2 \ar[d] \\
    \T^2 \ar[ur]^u \ar[r]_f & \T^2
    }
$$
\noindent
For $u,v\in T_f\mathcal{M}=\Gamma(f^*T\T^2)$, we also define two additional structures:
$$
G(u,v)=\int_{\T^2}g(u,v)\sigma,
$$
\noindent
and :
$$
\Omega(u,v)=\int_{\T^2}\sigma(u,v)\sigma=\int_{\T^2}g(Iu,v)\sigma=G(Iu,v),
$$
\noindent
where the action of the complex structure $I$ on $f^*T\T^2$ is given point-wise by: $(Iu)_x=I_{f(x)}u_x$.
\end{definition}
\begin{remark}
\noindent
\begin{enumerate}
    \item We note that the sections of $f^*T\T^2$ can be written in a unique way in the form $f_*X$, where $X$ is a honest vector field (i.e. a global section of $T\T^2$). In the sequel of this paper, we will always use this notation: $f_*X\in T_f\mathcal{M}$.
    \item The complex structure $I$ also acts on the vector fields:
    $$
    \forall X\in T\T^2,\;\forall x\in\T^2,\;(IX)_x=I_xX_x.
    $$
    \item The formulae given to define $G$ can be used to get a scalar product on any tensor above $\T^2$. We shall therefore use this notation for any type of tensors. Moreover, in order to simplify the notations a little, we'll drop the indices $k$ and $\alpha$ when there is no need to specify them.
\end{enumerate}
\end{remark}
\begin{proposition}\label{prop23}
$\Omega$ is a symplectic form on the Banach manifold $\mathcal{M}$, and $G$ is a Riemannian structure on it. Moreover, both are $\SympA$-invariant.
\end{proposition}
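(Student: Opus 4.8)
The plan is to verify four properties separately: that $\Omega$ is a smooth, closed, weakly non-degenerate $2$-form (the three conditions defining a symplectic form on a Banach manifold), that $G$ is a smooth, symmetric, positive-definite $(0,2)$-tensor, and finally that both are invariant under precomposition by $\SympA$. Smoothness in the Banach-manifold sense is essentially formal: $G$ and $\Omega$ are defined by integrating a pointwise algebraic pairing of the tangent vectors against the fixed volume form $\sigma$, with the only $f$-dependence entering through the metric $g$ and the complex structure $I$ evaluated at $f(x)$; since $g$ and $I$ are smooth tensors on the compact manifold $\T^2$, the maps $f\mapsto g_{f(\cdot)}$ and $f\mapsto I_{f(\cdot)}$ are smooth as maps into the appropriate $\mathcal{C}^{k,\alpha}$ section spaces, and bilinear continuous operations composed with integration preserve smoothness. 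Symmetry of $G$ and skew-symmetry of $\Omega$ are immediate from the symmetry of $g$ and the skew-symmetry of $\sigma$ fiberwise.

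For positive-definiteness of $G$: if $G(f_*X,f_*X)=\int_{\T^2}g(f_*X,f_*X)\,\sigma=0$, then since the integrand is a continuous non-negative function it vanishes identically, hence $f_*X\equiv 0$, hence $X\equiv 0$ (as $f$ is a diffeomorphism); this gives a weak Riemannian metric, which is all one expects on a Banach manifold. For the closedness of $\Omega$, I would argue that $\Omega$ is in fact \emph{translation-invariant} in a suitable sense, or better, exploit that it descends from a natural pairing that does not depend on the point $f$ in a way that produces a nonzero exterior derivative. Concretely, the cleanest route is to show $\Omega=d\lambda$ for an explicit primitive, or alternatively to compute $d\Omega$ directly using the formula for the exterior derivative on a Banach manifold in terms of directional derivatives and Lie brackets of vector fields on $\mathcal{M}$, checking that the $f$-derivatives of the pointwise pairing $g(I\cdot,\cdot)$ assemble antisymmetrically. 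I expect this to be the main obstacle: the na\"ive computation involves the variation of $I_{f(x)}$ along a path of diffeomorphisms, and one must show the resulting terms cancel in the alternating sum; here it helps that $I$ is parallel for the flat metric $g$, so that $\nabla I=0$ kills precisely the terms that would otherwise obstruct closedness.

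Finally, for $\SympA$-invariance: let $h\in\SympA$ and let $R_h:\mathcal{M}\to\mathcal{M}$, $f\mapsto f\circ h$, be the right translation. Its differential sends $f_*X\in T_f\mathcal{M}$ to $(f\circ h)_*(h^{-1}_*X)\in T_{f\circ h}\mathcal{M}$, i.e. to the section $x\mapsto X_{h(x)}$ after the obvious identification. Then $G(R_h^*u, R_h^*v)$ at $f\circ h$ equals $\int_{\T^2} g\big(u_{h(x)},v_{h(x)}\big)\,\sigma(x)$, and performing the change of variables $y=h(x)$ together with $h^*\sigma=\sigma$ (which holds because $h$ is a symplectomorphism) returns $\int_{\T^2} g(u_y,v_y)\,\sigma(y)=G(u,v)$; the same computation with $\sigma(\cdot,\cdot)$ in place of $g(\cdot,\cdot)$ in the integrand — again using $h^*\sigma=\sigma$ both in the integrand and in the volume form — gives $\Omega$-invariance. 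Thus the only external inputs are: the Banach-manifold structure from \cite{AT22}, parallelism of $I$ for the flat Kähler structure, and the defining property $h^*\sigma=\sigma$ of symplectomorphisms; I would present the invariance first since it is the shortest, then non-degeneracy, and devote the bulk of the argument to closedness.
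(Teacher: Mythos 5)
Your proposal is correct, and on the invariance claim --- the only part the paper works out in full --- it is the same argument: the differential of right translation by $\varphi\in\SympA$ is $u\mapsto u\circ\varphi$, and the change of variables together with $\varphi^*\sigma=\sigma$ (used both in the integrand and in the volume form) gives invariance of $G$ and $\Omega$ simultaneously, since $\Omega(u,v)=G(Iu,v)$. Where you genuinely diverge is closedness. The paper disposes of it in one line by commuting the exterior differential with the integral over $\T^2$ (in effect, fibre integration of $\mathrm{ev}^*\sigma$ along the compact fibre of $\mathcal{M}\times\T^2\to\mathcal{M}$), so that $d\Omega=\int_{\T^2}d\sigma(\cdot,\cdot)\,\sigma=0$; this uses only $d\sigma=0$ and would work for any closed $2$-form on any target manifold. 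You instead propose the intrinsic formula for $d\Omega$ via directional derivatives and Lie brackets of vector fields on $\mathcal{M}$, killing the $f$-derivative of the pointwise pairing by parallelism of $I$ --- really by the constancy of the coefficients of $\sigma$ in the global trivialization $T\T^2\cong\T^2\times\R^2$, which is what lets you work with constant vector fields whose brackets vanish. That computation does close up on the flat torus, so your route is valid, but it is longer and strictly less general; it is worth realizing that closedness here is a soft consequence of $d\sigma=0$ alone. Two minor points: you announce weak non-degeneracy of $\Omega$ as one of the properties to check but never verify it (it is immediate from $\Omega(u,v)=G(Iu,v)$, the positivity of $G$ you do establish, and the invertibility of $I$); and the identification of $D_fR_h(f_*X)$ with ``the section $x\mapsto X_{h(x)}$'' should read $x\mapsto u_{h(x)}$ with $u=f_*X$ --- your subsequent integral uses the correct expression, so this is only a notational slip.
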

\begin{proof}
Recall the definitions of differential forms, exterior differential, and Lie differential on a Banach manifold in \cite{AT22} paragraph 1.9. Then, it immediatly follows that the exterior differential on $\Omega$ commutes with the integral and acts on the form $\sigma$ exactly as the exterior differential of $\T^2$: $d\Omega=d\left(\int_{\T^2}\sigma(.,.)\sigma\right)=\int_{\T^2}d\sigma(.,.)\sigma=0$ (because $d\sigma=0$). Similarly, we see that $\Omega$ is non-degenerated. The non-degeneratedness of $G$ is also derived quite easly. Let us now show that both are $\SympA$-invariant. For $\varphi\in\SympA$, we define:
$$
\psi_\varphi:\left\{\begin{array}{ll}\mathcal{M}\rightarrow\mathcal{M}\\f\mapsto f\circ\varphi\end{array}\right.,
$$
\noindent
and compute its differential:
$$
D_f\psi_\varphi:\left\{\begin{array}{ll}T_f\mathcal{M}\rightarrow T_{f\circ\varphi}\mathcal{M}\\v\mapsto v\circ\varphi\end{array}\right..
$$
\noindent
The pullback of $\Omega$ by this action is:
\begin{align*}
\left(\psi_\varphi^*\Omega\right)(u,v)&=\Omega(D_f\psi_\varphi(u),D_f\psi_\varphi(v))=\int_{\T^2}\sigma(u\circ\varphi,v\circ\varphi)\sigma\\&=\int_{\T^2}\varphi^*\sigma(u,v)\sigma=\int_{\T^2}\varphi^*\sigma(u,v)\varphi^*\sigma\\
&=\int_{\T^2}\sigma(u,v)\sigma=\Omega(u,v),
\end{align*}
\noindent
where $\sigma(u,v)$ is viewed as a function on $\T^2$, and we used that $\varphi^*\sigma=\sigma$. The computations for $G$ are identical given that $(Iu)\circ\varphi=I(u\circ\varphi)$ and $G(Iu,v)=\Omega(u,v)$.
\end{proof}
\noindent
We thus defined a (weak) symplectic Banach manifolds $(\mathcal{M},\Omega)$ that we'll now use to produce the desired moment map.

\subsection{Moment map and gradient flow}
\noindent
\begin{theorem}\label{thm24}
The following map is a moment map for the action of $\mathcal{G}$ on $\mathcal{M}$ (\ref{def21}):
$$
\mu:\left\{\begin{array}{ll}\mathcal{M}\rightarrow\mathcal{C}_0^{k,\alpha}(\T^2)\\f\mapsto -H_f+1\end{array}\right.,
$$
\noindent
where $H_f$ is the unique function such that $f^*\sigma=H_f\sigma$ (we can also write $H_f=\frac{f^*\sigma}{\sigma}$). Moreover, its zeros are exactly the symplectomorphisms.
\end{theorem}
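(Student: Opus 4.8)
The plan is to verify the two defining properties of a moment map: $\mathrm{Sp}(1)$--equivariance (here trivial since $\mathcal{G}=Ham(\T^2,\sigma)$ is abelian-acting on the abelian Lie algebra $\mathcal{C}^\infty_0(\T^2)$ in the sense that the coadjoint action is trivial) and the \emph{moment map equation}: for every $\xi$ in the Lie algebra $\mathfrak{g}=\mathcal{C}^{k+1,\alpha}_0(\T^2)$ of $\mathcal{G}$, with associated fundamental vector field $\underline{\xi}$ on $\mathcal{M}$, one has $d\langle\mu,\xi\rangle = \iota_{\underline{\xi}}\Omega$, where $\langle\mu(f),\xi\rangle = \int_{\T^2}(-H_f+1)\,\xi\,\sigma$. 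So the core of the proof is a computation of both sides at a point $f\in\mathcal{M}$, evaluated against an arbitrary tangent vector $f_*Y\in T_f\mathcal{M}$.

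First I would make the fundamental vector field explicit. An element $\xi\in\mathcal{C}^\infty_0(\T^2)$ generates the Hamiltonian vector field $X_\xi$ defined by $\iota_{X_\xi}\sigma = d\xi$; its flow $\varphi^\xi_t$ lies in $Ham(\T^2,\sigma)$, and the action on $\mathcal{M}$ is $f\mapsto f\circ\varphi^\xi_t$, so the fundamental vector field at $f$ is $\underline{\xi}_f = \pdt f\circ\varphi^\xi_t = f_*X_\xi \in T_f\mathcal{M}$ (using $f\circ\varphi^\xi_t = f\circ(\mathrm{Id}+tX_\xi+o(t))$ and the identification $T_f\mathcal{M}=\Gamma(f^*T\T^2)$, writing sections as $f_*(\text{vector field})$ per the Remark). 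Then the right-hand side is, by the definition of $\Omega$ in \ref{def21},
$$
\big(\iota_{\underline{\xi}}\Omega\big)(f_*Y) = \Omega(f_*X_\xi, f_*Y) = \int_{\T^2}\sigma(X_\xi, Y)\,\sigma,
$$
where I am using $\sigma(f_*X_\xi,f_*Y)$ as a function on $\T^2$ and the fact that pulling back through $f$ and integrating against $f^*\sigma=H_f\sigma$ versus pushing forward — here one must be careful: the cleaner route is to observe that $\Omega(f_*X_\xi,f_*Y)=\int_{\T^2}f^*\big(\sigma(X_\xi,Y)\big)\,\sigma$ is \emph{not} obviously $\int\sigma(X_\xi,Y)\sigma$, so instead I would change variables by $f$ and write everything downstairs, or equivalently exploit the $\SympA$-invariance differently; the honest computation uses $\sigma(X_\xi,Y) = d\xi(Y) = Y(\xi)$ pointwise and a change of variables to move the integral to the source.

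Next I would compute the left-hand side. Fix the tangent vector $f_*Y$ and a curve $f_t$ in $\mathcal{M}$ with $f_0=f$, $\pdt f_t = f_*Y$, i.e. $f_t = f\circ(\mathrm{Id}+tY+o(t))$ or $f_t = (\mathrm{Id}+t\,\tilde Y+o(t))\circ f$ with $\tilde Y = f_*Y$. Then $f_t^*\sigma = H_{f_t}\sigma$ and I need $\pdt H_{f_t}$. Writing $f_t^*\sigma = f^*\sigma + t\,\mathcal{L}_{\text{(the generating field)}}(\cdots) + o(t)$ and using Cartan's formula together with $d\sigma=0$, one gets $\pdt f_t^*\sigma = d\iota_{Z}(f^*\sigma)$ for the appropriate field $Z$ on $\T^2$ pulled back through $f$; since $f^*\sigma = H_f\sigma$ this becomes $d\big(H_f\,\iota_Z\sigma\big)$, an exact $2$-form on the surface, hence equals $(\text{something})\cdot\sigma$, and dividing by $\sigma$ identifies $\pdt H_{f_t}$. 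Therefore
$$
d\langle\mu,\xi\rangle(f_*Y) = \pdt\int_{\T^2}(-H_{f_t}+1)\,\xi\,\sigma = -\int_{\T^2}\Big(\pdt H_{f_t}\Big)\,\xi\,\sigma,
$$
and an integration by parts on $\T^2$ (legitimate since $\T^2$ is closed) moves the derivative off $\xi$; matching this against the expression for $\iota_{\underline\xi}\Omega(f_*Y)$ obtained above is the crux. \textbf{The main obstacle} I anticipate is precisely this bookkeeping of pullbacks: keeping straight whether each field and form lives on the source or target copy of $\T^2$, correctly computing $\pdt H_{f_t}$ as a divergence-type term, and arranging the two integrations by parts so that the boundary-free manifold makes the two sides agree identically in $\xi$ and $Y$. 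Once the moment map equation is checked, equivariance is immediate (the $Ham$-action on the center-valued moment map is trivial because $\mathcal{C}^\infty_0(\T^2)$ carries no nontrivial coadjoint action here, or more precisely because $\mu$ is manifestly $Ham$-invariant: $H_{f\circ\varphi} = (f\circ\varphi)^*\sigma/\sigma = \varphi^*(f^*\sigma)/\sigma = \varphi^*(H_f\sigma)/\sigma = H_f\circ\varphi$ and one checks the required compatibility). Finally, the statement that the zero locus of $\mu$ is $\SympA$ is immediate: $\mu(f)=0 \iff H_f\equiv 1 \iff f^*\sigma=\sigma \iff f\in\SympA$.
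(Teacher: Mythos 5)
Your overall strategy --- verifying the moment map identity by pairing both sides against an arbitrary $f_*Y\in T_f\mathcal{M}$, then checking equivariance and identifying the zero locus --- is the same as the paper's, and your treatment of equivariance (via $H_{f\circ\varphi}=H_f\circ\varphi$) and of the zero locus ($\mu(f)=0\iff f^*\sigma=\sigma$) is correct. But the central computation has a genuine gap, which you yourself flag as ``the main obstacle'' without resolving. First, your displayed formula $\Omega(f_*X_\xi,f_*Y)=\int_{\T^2}\sigma(X_\xi,Y)\,\sigma$ is not what Definition \ref{def21} gives: for sections $u=f_*X_\xi$, $v=f_*Y$ of $f^*T\T^2$, the integrand $\sigma(u,v)$ at $x$ is $\sigma_{f(x)}\bigl((f_*X_\xi)_x,(f_*Y)_x\bigr)=(f^*\sigma)_x(X_\xi,Y)$, so the correct right-hand side is $\int_{\T^2}(f^*\sigma)(X_\xi,Y)\,\sigma$. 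The pullback $f^*\sigma=H_f\sigma$ is not a nuisance to be changed away: it is the entire source of the $H_f$ appearing in $\mu$, and your proposed fix (using $\sigma(X_\xi,Y)=d\xi(Y)$ pointwise and moving the integral ``to the source'') would erase it and produce an answer that does not see the geometry of $f$.

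Second, the step that actually closes the argument is missing. The paper converts $\int_{\T^2}(\iota_{X_h}\iota_Y f^*\sigma)\,\sigma$ into a Stokes-integrable expression by using that there are no nonzero $3$-forms on a surface:
$$
0=\iota_{X_h}\bigl(\iota_Yf^*\sigma\wedge\sigma\bigr)=(\iota_{X_h}\iota_Yf^*\sigma)\,\sigma-\iota_Yf^*\sigma\wedge\iota_{X_h}\sigma,
$$
whence the integrand becomes $\iota_Yf^*\sigma\wedge\iota_{X_h}\sigma=-\iota_Yf^*\sigma\wedge dh$ (the paper's convention is $\iota_{X_h}\sigma=-dh$), and one integration by parts yields $\int_{\T^2}h\,d\iota_Yf^*\sigma$. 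Matching this with $D_f\mu(f_*Y)=-\frac{d\iota_Yf^*\sigma}{\sigma}$ --- which your Cartan-formula computation of $\pdt H_{f_t}$ essentially does give --- completes the identity in the paper's convention $\Omega(f_*X_h,f_*Y)=-G(D_f\mu(f_*Y),h)$. Without this dimension-count trick, or an equivalent explicit pointwise identity relating $(f^*\sigma)(X_h,Y)$ to $dh$ and $\iota_Yf^*\sigma$, the ``bookkeeping of pullbacks'' you defer is precisely where the proof lives, so as written the argument is incomplete.
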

\begin{proof}
The infinitesimal action of $\mathcal{G}$ on $\mathcal{M}$ is given by the pushforwards of Hamiltonian vector fields with respect to $\sigma$. Let $f\in\mathcal{M}$ and $\{\varphi_t\}$ be a Hamiltonian isotopy such that $\left.\frac{d}{ds}\right|_{s=0}\varphi_s=X_h\in\Gamma_{ham}(T\T^2)$. We then have:
$$
\left.\frac{\partial}{\partial s}\right|_{s=0}\left(f\circ\varphi_s\right)=f_*\left(\left.\frac{\partial}{\partial s}\right|_{s=0}\varphi_s\right)=f_*X_h.
$$
\noindent
Now, let $X\in\Gamma_{symp}(T\T^2)$ and $f_*Y\in T_f\mathcal{M}$. We get:
\begin{align*}
\Omega(f_*X_h\,,\,f_*Y)&=\int_{\T^2}f^*\sigma(X_h\,,\,Y)\sigma=-\int_{\T^2}(\iota_{X_h}\iota_Yf^*\sigma)\sigma\\&=-\int_{\T^2}\iota_Yf^*\sigma\wedge\iota_{X_h}\sigma=\int_{\T^2}\iota_Yf^*\sigma\wedge dh\\
&=-\int_{\T^2}dh\wedge\iota_Yf^*\sigma=\int_{\T^2}hd\iota_Yf^*\sigma\\
&=\int_{\T^2}h\left(\frac{d\iota_Yf^*\sigma}{\sigma}\right)\sigma=-G\left(-\frac{d\iota_Yf^*\sigma}{\sigma},h\right),
\end{align*}
\noindent
where we used that: the interior product and the exterior differential are anti-derivations (in particular, at the second line: $\iota_{X_h}(\iota_Yf^*\sigma\wedge\sigma)=\iota_{X_h}\iota_Yf^*\sigma\wedge\sigma-\iota_Yf^*\sigma\wedge\iota_{X_h}\sigma$) ; that the space of $3$-forms on a surface is reduced to $\{0\}$ ; and Stokes theorem. We then show that the differential of $\mu$ at $f\in\mathcal{M}$ is equal to $-\frac{d\iota_Yf^*\sigma}{\sigma}$. But this comes from the Cartan formulae and the closedness of $\sigma$:
$$
D_f\mu(f_*Y)=\left.\frac{\partial}{\partial  s}\right|_{s=0}\left(-\frac{f_s^*\sigma}{\sigma}+1\right)=-\frac{\left.\frac{\partial}{\partial  s}\right|_{s=0}\left(f_s^*\sigma\right)}{\sigma}=-\frac{\mathcal{L}_Yf^*\sigma}{\sigma}=-\frac{d\iota_Yf^*\sigma}{\sigma}
$$
\noindent
The only remaining thing to prove is that $\mu$ is $\mathcal{G}$-equivariant. We show something even stronger: that it is $\SympA$-equivariant. Let $\varphi\in\SympA$ and $f\in\mathcal{M}$, we have:
\begin{align*}
    \mu(f\circ\varphi)&=-\frac{(f\circ\varphi)^*\sigma}{\sigma}+1=-\frac{\varphi^*f^*\sigma}{\varphi^*\sigma}+1\\&=-\varphi^*\left(\frac{f^*\sigma}{\sigma}\right)+1=\varphi^*\left(-\frac{f^*\sigma}{\sigma}+1\right)=\mu(f)\circ\varphi.
\end{align*}
\end{proof}
\begin{definition}\label{def25}
We intoduce the following norm function:
$$
\phi:\left\{\begin{array}{ll}\mathcal{M}\rightarrow \R\\f\mapsto\frac{1}{2}\|\mu(f)\|^2:=\frac{1}{2}G(\mu(f),\mu(f))=\frac{1}{2}\int_{\T^2}\mu(f)^2\sigma\end{array}\right.
$$
\noindent
as well as its associated gradient flow:
\begin{equation}\label{FG}
\frac{\partial \Tilde{f_t}}{\partial t}=-\nabla\phi(\Tilde{f_t}),
\end{equation}
\noindent
where the gradient is taken with respect to the Riemannian metric $G$:
$$
D_f\phi(f_*Y)=G(\nabla\phi(f),f_*Y),
$$
\noindent
for all $f\in\mathcal{M}$ and $f_*Y\in T_f\mathcal{M}$.
\end{definition}
\noindent
We enumerate some of the properties of that flow:
\begin{proposition}\label{prop26} Using the same notations as before:
\begin{enumerate}
    \item $\phi$ is $\SympA$-invariant ;
    \item $\nabla\phi(f)=f_*\sharp_\sigma (H_f)^2d^*f^*\sigma$ (where $H_f$ is defined in \ref{thm24} and the operator $\sharp_\sigma$ is the musical isomorphism with respect to $\sigma$ that sends $1$-forms to vector fields) ;
    \item Crit($\phi$)=$\phi^{-1}(0)$ ;
    \item $\nabla\phi$ is $\SympA$-equivariant ;
    \item $D_f\nabla\phi(f_*Y)=\left(\left.\frac{\partial}{\partial s}\right|_{s=0}(f_s)_*\sharp_\sigma (H_f)^2d^*f^*\sigma\right)+f_*\sharp_\sigma 2\frac{d\iota_Yf^*\sigma}{\sigma}H_fd^*f^*\sigma+f_*\sharp_\sigma (H_f)^2d^*d\iota_Yf^*\sigma$.
\end{enumerate}
\end{proposition}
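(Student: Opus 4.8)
The plan is to dispatch the five items in the order (1), (4), (2), (3), (5): item (4) is formal once (1) is in hand, item (3) can be obtained independently of the explicit gradient formula, and item (5) is just the Leibniz rule applied to the three--factor expression produced in (2).

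\emph{Items (1) and (4).} For (1) I would combine the $\SympA$-equivariance of $\mu$ (Theorem \ref{thm24}) with $\varphi^*\sigma=\sigma$: since $\varphi$ is an orientation-preserving diffeomorphism, $\tfrac12\int_{\T^2}\mu(f\circ\varphi)^2\sigma=\tfrac12\int_{\T^2}\varphi^*(\mu(f)^2)\,\varphi^*\sigma=\tfrac12\int_{\T^2}\varphi^*\!\big(\mu(f)^2\sigma\big)=\phi(f)$. For (4) I would avoid the explicit formula and argue abstractly: by Proposition \ref{prop23} the right-translation $\psi_\varphi:f\mapsto f\circ\varphi$ preserves $G$ and has differential $v\mapsto v\circ\varphi$, and by (1) it preserves $\phi$; differentiating $\phi\circ\psi_\varphi=\phi$ and using that $D_f\psi_\varphi$ is a linear $G$-isometry $T_f\mathcal{M}\to T_{f\circ\varphi}\mathcal{M}$ gives $\nabla\phi(f\circ\varphi)=D_f\psi_\varphi(\nabla\phi(f))=(\nabla\phi(f))\circ\varphi$. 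This uses only the $\SympA$-invariance of $G$, not that $\varphi$ be a Riemannian isometry.

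\emph{Item (2).} This is the core computation. By the chain rule and the identity $D_f\mu(f_*Y)=-\frac{d\iota_Y f^*\sigma}{\sigma}$ extracted from the proof of Theorem \ref{thm24}, $D_f\phi(f_*Y)=G\big(\mu(f),D_f\mu(f_*Y)\big)=-\int_{\T^2}\mu(f)\,d\iota_Y f^*\sigma$. I would then integrate by parts (Stokes on the closed surface $\T^2$), rewrite $d\mu(f)=-dH_f$, substitute $f^*\sigma=H_f\sigma$ to obtain $-\int_{\T^2}H_f\,dH_f\wedge\iota_Y\sigma$, and repackage the integrand using the surface identities linking the Hodge codifferential, the symplectic musical isomorphism and the Riemannian gradient (in dimension two, up to sign, $\sharp_\sigma\,d^*(h\sigma)=\nabla^g h$, since $d^*(h\sigma)=\pm{*}dh$ and $\sharp_\sigma\circ{*}=\sharp_g$). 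Matching this against the definition of $G$ on $T_f\mathcal{M}$ and reading off the unique vector field $W$ with $\nabla\phi(f)=f_*W$ — legitimate because $f$ is a diffeomorphism, so $f_*$ is a bundle isomorphism — should produce $\nabla\phi(f)=f_*\sharp_\sigma(H_f)^2 d^*f^*\sigma$. The step I expect to be the main obstacle is precisely the bookkeeping of the Jacobian $H_f$: it enters once from $f^*\sigma=H_f\sigma$ and again through the $f$-dependence of the metric $G$ on $T_f\mathcal{M}$ (equivalently, through the change of variables $y=f(x)$), and these contributions must be combined correctly to land on the power $(H_f)^2$.

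\emph{Items (3) and (5).} For (3), $\phi^{-1}(0)\subseteq\mathrm{Crit}(\phi)$ is immediate since $\phi\ge0$. For $\mathrm{Crit}(\phi)\subseteq\phi^{-1}(0)$ I would argue directly from $D_f\phi(f_*Y)=-\int_{\T^2}\mu(f)\,d\iota_Y f^*\sigma$: as $Y$ runs over all vector fields, $\iota_Y f^*\sigma=H_f\iota_Y\sigma$ runs over all $1$-forms (because $H_f>0$ and $\iota_{(\cdot)}\sigma$ is an isomorphism), so $d\iota_Y f^*\sigma$ runs over all exact $2$-forms, i.e.\ all $2$-forms of zero total integral; hence $D_f\phi=0$ forces $\mu(f)$ to be $L^2$-orthogonal to every mean-zero function, i.e.\ $\mu(f)$ constant, and since $\mu(f)\in\mathcal{C}_0^{k,\alpha}(\T^2)$ has zero mean (as $\int_{\T^2}f^*\sigma=\int_{\T^2}\sigma$ for the degree-one map $f$), we get $\mu(f)=0$, so $f\in\SympA$ and $\phi(f)=0$; with Theorem \ref{thm24} this also yields $\phi^{-1}(0)=\SympA$. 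Finally, (5) follows by differentiating $\nabla\phi(f)=(f)_*\circ\sharp_\sigma\big((H_f)^2\,d^*f^*\sigma\big)$ along $f_*Y$ by the Leibniz rule over its three $f$-dependent factors: the frame $(f)_*$ (the first, intentionally unevaluated, term), the function $(H_f)^2$ (with derivative $2H_f\cdot\frac{d\iota_Y f^*\sigma}{\sigma}$, since $H_f=1-\mu(f)$ and $D_f\mu(f_*Y)=-\frac{d\iota_Y f^*\sigma}{\sigma}$), and the $2$-form $f^*\sigma$ inside $d^*$ (with derivative $\mathcal{L}_Y f^*\sigma=d\iota_Y f^*\sigma$ by the Cartan formula and $df^*\sigma=0$), while $\sharp_\sigma$ and $d^*$ are pulled through as the fixed linear operators they are.
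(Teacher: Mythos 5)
Your proposal is correct, and items (1), (2), (4) and (5) follow essentially the same route as the paper: (1) by pulling back $\mu(f)^2\sigma$, (4) by the abstract argument that the gradient of an invariant function under a $G$-isometric action is equivariant, and (5) by the Leibniz rule over the three $f$-dependent factors with $\partial_s H_{f_s}=\frac{d\iota_Yf^*\sigma}{\sigma}$. For (2) you reach the paper's intermediate expression $G(Y,\sharp_\sigma H_f d^*f^*\sigma)$ through the explicit surface identity $\sharp_\sigma d^*(h\sigma)=\pm\nabla^g h$ rather than through the adjointness $G(d\beta,\gamma)=G(\beta,d^*\gamma)$, and you correctly single out the remaining change of variables $G(Y,W)=G\bigl(f_*Y,f_*(H_fW)\bigr)$ as the source of the second factor of $H_f$ --- this is exactly what the paper does in the last three lines of its computation, so no gap there. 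The one genuinely different step is (3): the paper deduces from $\nabla\phi(f)=0$ that $d^*f^*\sigma=0$, writes $f^*\sigma-\sigma=d\alpha$ using $f^*[\sigma]=[\sigma]$, and kills $d\alpha$ as an exact coclosed form via $\|d\alpha\|^2=\int g(\alpha,d^*d\alpha)\sigma=0$; you instead observe that $Y\mapsto d\iota_Yf^*\sigma$ surjects onto the exact (equivalently, mean-zero) $2$-forms, so $D_f\phi=0$ forces $\mu(f)$ to be constant, and the normalization $\int_{\T^2}f^*\sigma=\int_{\T^2}\sigma$ makes that constant zero. Both arguments consume the same cohomological input (that $f$ preserves orientation and has degree one), but yours bypasses the explicit gradient formula entirely, and the inclusion $\phi^{-1}(0)\subset\mathrm{Crit}(\phi)$ becomes the trivial remark that a nonnegative function is critical at its zeros; the paper's version has the small advantage of exhibiting the critical equation $d^*f^*\sigma=0$ in the form that is reused later in the DeTurck discussion.
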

\begin{proof}
\noindent
\begin{enumerate}
    \item $2\phi(f\circ\varphi)=\int_{\T^2}(\mu(f\circ\varphi))^2\sigma=\int_{\T^2}(\mu(f)\circ\varphi)^2\varphi^*\sigma=\int_{\T^2}\varphi^*\left((\mu(f))^2\sigma\right)=2\phi(f)$ ;
    \item We compiute the differential of $\phi$ at an element $f\in\mathcal{M}$:
    \begin{align*}    D_f\phi(f_*Y)&=G\left(D_f\mu(f_*Y),\mu(f)\right)=G\left(d\iota_Yf^*\sigma,f^*\sigma\right)\\    &=G\left(\iota_Yf^*\sigma,d^*f^*\sigma\right)=G\left(H_f\iota_Y\sigma,d^*f^*\sigma\right)=G\left(Y,\sharp_\sigma H_fd^*f^*\sigma\right)\\
    &=\int_{\T^2}g(Y\,,\,\sharp_\sigma H_fd^*f^*\sigma)\sigma=\int_{\T^2}f^*\left(g(Y\,,\,\sharp_\sigma H_fd^*f^*\sigma)\sigma\right)\\
    &=\int_{\T^2}f^*g(Y\,,\,\sharp_\sigma H_fd^*f^*\sigma)f^*\sigma=\int_{\T^2}f^*g(Y\,,\,H_f\sharp_\sigma H_fd^*f^*\sigma)\sigma\\
    &=G(f_*Y,f_*\sharp_\sigma (H_f)^2d^*f^*\sigma),
    \end{align*}
    \noindent
    where we used at the second line:
    \begin{itemize}
    \item $\iota_Yf^*\sigma=\iota_YH_f\sigma=H_f\iota_Y\sigma$ ;
    \item $\flat_\sigma(Y)=\iota_Y\sigma=\iota_{iY}g=\flat_g(iY)=i\flat_g(Y)$ ;
    \item $\sharp_\sigma=-i\sharp_g$ ;
    \item the fact that $\flat_g$ and $\sharp_g$ are dual operators for the metric $G$ ;
    \item the fact that $G$ is $I$-invariant: $G\left(iY,\sharp_g H_fd^*f^*\sigma\right)=G\left(Y,\sharp_\sigma H_fd^*f^*\sigma\right)$.
    \end{itemize}
    \noindent
    We deduce the expression for its gradient: $\nabla\phi(f)=f_*\sharp_\sigma (H_f)^2d^*f^*\sigma$.
    \item The differential of $\phi$ was computed in the previous point:
    $$
    D_f\phi:\left\{\begin{array}{ll}T_f\mathcal{M}\rightarrow\R\\f_*Y\mapsto G(D_f\mu(f_*Y),\mu(f))=G(f_*Y,f_*\sharp_\sigma(H_f)^2d^*f^*\sigma)\end{array}\right..
    $$
    \noindent
    Therefore, since $G$ is a Riemannian metric:
    $$
    \forall\,f_*Y\in T_f\mathcal{M},\;D_f\phi(f_*Y)=0\iff f_*\sharp_\sigma(H_f)^2d^*f^*\sigma=0\iff d^*f^*\sigma=0.
    $$
    \noindent
    The last equivalence come from the fact that all terms in front of $d^*f^*\sigma$ are invertible linear operators. Thus, $f^*\sigma$ is harmonic. Moreover, since $f$ preserves the orientation of $\T^2$, it verifies
    $$
    [f^*\sigma]=f^*[\sigma]=[\sigma],
    $$
    \noindent
    i.e. $f^*\sigma-\sigma=d\alpha$ for some $1$-form $\alpha$. We then see that if $d^*d\alpha=0$ we must have $d\alpha=0$ on $\T^2$:
    $$  \|d\alpha\|^2=\int_{\T^2}g(d\alpha,d\alpha)\sigma=\int_{\T^2}g(\alpha,d^*d\alpha)\sigma=0.
    $$
    Hence $f\in\SympA=\phi^{-1}(0)$. On the other hand, if $f\in\SympA$, we get $d^*f^*\sigma=d^*\sigma=0$, because $\sigma$ is harmonic (it is an area form), hence $D_f\phi=0$, i.e. $f\in$ Crit($\phi$).
    \item Let $f\in\mathcal{M}$ and $\varphi\in\SympA$. Starting from the $\SympA$-invariance of $\phi$, we get:
\begin{align*}
\phi(f\circ\varphi)=\phi(f)&\implies\forall\,v\in T_f\mathcal{M},\; D_{f\circ\varphi}\phi(v\circ\varphi)=D_f\phi(v)\\
&\implies\forall\,v\in T_f\mathcal{M},\;G\left(\nabla\phi(f\circ\varphi),v\circ\varphi\right)=G\left(\nabla\phi(f),v\right)\\
&\implies\forall\,v\in T_f\mathcal{M},\;G\left(\nabla\phi(f\circ\varphi),v\circ\varphi\right)=G\left(\nabla\phi(f)\circ\varphi,v\circ\varphi\right)\\
&\implies\forall\,w\in T_{f\circ\varphi}\mathcal{M},\;G\left(\nabla\phi(f\circ\varphi),w\right)=G\left(\nabla\phi(f)\circ\varphi,w\right)\\
&\implies\nabla\phi(f\circ\varphi)=\nabla\phi(f)\circ\varphi,
\end{align*}
\noindent
where we used at the third line that $G\left(\cdot,\cdot\right)$ is $\SympA$-invariante.
\item It is a direct computation using the Leibniz rule:
\begin{align*}
D_f\nabla\phi(f_*Y)&=\left.\frac{\partial}{\partial s}\right|_{s=0}\left(\nabla\phi(f_s)\right)\\
&=\left.\frac{\partial}{\partial s}\right|_{s=0}\left({f_s}_*\sharp_\sigma (H_{f_s})^2d^*{f_s}^*\sigma\right)\\
&=\left(\left.\frac{\partial}{\partial s}\right|_{s=0}(f_s)_*\sharp_\sigma (H_f)^2d^*f^*\sigma\right)+f_*\sharp_\sigma 2\frac{d\iota_Yf^*\sigma}{\sigma}H_fd^*f^*\sigma+f_*\sharp_\sigma (H_f)^2d^*d\iota_Yf^*\sigma,
\end{align*}
\noindent
where, for the last line, we used that:
\begin{align*}
\left.\frac{\partial}{\partial s}\right|_{s=0}H_{f_s}&=\left.\frac{\partial}{\partial s}\right|_{s=0}\frac{f_s^*\sigma}{\sigma}=\frac{\left.\frac{\partial}{\partial s}\right|_{s=0}f_s^*\sigma}{\sigma}=\frac{\mathcal{L}_Yf^*\sigma}{\sigma}=\frac{d\iota_Yf^*\sigma}{\sigma}.
\end{align*}
\end{enumerate}
\end{proof}
\begin{remark}
Note that for all $f\in\mathcal{M}$ the highest order term of the linear differential operator $D_f\nabla\phi$ is the last one. Hence it is not elliptic since its kernel contains a vector space isomorphic to the space of 1-forms. The kernel hence is infinite dimensional, which isn't that surprising given the invariance of the operator under the gauge group. But we shall show that a DeTurck type of argument (using this invariance) produces a second equation that is parabolic and whose solutions are solutions of the initial problem.
\end{remark}

\subsection{DeTurck Trick}
\noindent
In this subsection, we shall explore the trick discovered by DeTurck to produce out of an invariant non-parabolic equation a parabolic one whose solutions are solutions of the first one. In this part, we shall use the notation $\partial_t=\frac{\partial}{\partial t}$.
\begin{lemma}\label{lem28}
Let $\{\Tilde{f_t}\}$ be a solution of \eqref{FG} and $\{\varphi_t\}$ be a one-parameter family of Hamiltonian symplectomorphsisms. Then:
$$
\partial_t\left(\Tilde{f_t}\circ\varphi_t\right)=-\nabla\left(\Tilde{f_t}\circ\varphi_t\right)+(\Tilde{f_t})_*(\partial_t\varphi_t).
$$
\end{lemma}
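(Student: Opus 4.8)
The plan is to differentiate the curve $t\mapsto \Tilde{f_t}\circ\varphi_t$ in $\mathcal{M}$ by the product rule, treating separately the $t$-dependence carried by $\Tilde f$ itself and the $t$-dependence carried by the reparametrisation $\varphi_t$, and then to identify the two resulting terms using the flow equation \eqref{FG} together with the equivariance properties established in Proposition \ref{prop26}.

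Concretely, I would fix $x\in\T^2$, regard $\Tilde f$ as a map $(t,y)\mapsto\Tilde{f_t}(y)$, and apply the ordinary (finite-dimensional) chain rule to $t\mapsto\Tilde{f_t}(\varphi_t(x))$:
$$
\frac{d}{dt}\big(\Tilde{f_t}(\varphi_t(x))\big)\;=\;(\partial_t\Tilde{f_t})(\varphi_t(x))\;+\;D_{\varphi_t(x)}\Tilde{f_t}\big((\partial_t\varphi_t)(x)\big),
$$
an identity of tangent vectors in $T_{\Tilde{f_t}(\varphi_t(x))}\T^2$. Letting $x$ vary, the left-hand side is by definition $\partial_t(\Tilde{f_t}\circ\varphi_t)\in T_{\Tilde{f_t}\circ\varphi_t}\mathcal{M}=\Gamma\big((\Tilde{f_t}\circ\varphi_t)^*T\T^2\big)$, and the right-hand side is a sum of two sections of the same bundle which I treat in turn.

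For the first term, the flow equation \eqref{FG} gives $\partial_t\Tilde{f_t}=-\nabla\phi(\Tilde{f_t})$, hence $(\partial_t\Tilde{f_t})\circ\varphi_t=-\,\nabla\phi(\Tilde{f_t})\circ\varphi_t$. Since a one-parameter family of Hamiltonian symplectomorphisms consists of elements of $\SympA$, Proposition \ref{prop26}(4) applies and yields $\nabla\phi(\Tilde{f_t})\circ\varphi_t=\nabla\phi(\Tilde{f_t}\circ\varphi_t)$, so the first term equals $-\nabla\phi(\Tilde{f_t}\circ\varphi_t)$. For the second term, the section $\partial_t\varphi_t$ of $\varphi_t^*T\T^2$ post-composed with the differential of $\Tilde{f_t}$ is precisely what the notational convention introduced after Definition \ref{def21} denotes by $(\Tilde{f_t})_*(\partial_t\varphi_t)$: writing $\partial_t\varphi_t=(\varphi_t)_*W_t$ for the vector field $W_t$, one has $(\Tilde{f_t})_*(\partial_t\varphi_t)=(\Tilde{f_t}\circ\varphi_t)_*W_t$, whose value at $x$ is $D_{\varphi_t(x)}\Tilde{f_t}\big((\partial_t\varphi_t)(x)\big)$. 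Adding the two contributions gives exactly the asserted identity.

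The one point that needs care — and the only place where something could genuinely go wrong — is the legitimacy of the product rule at the level of the Banach manifold $\mathcal{M}$: one must ensure that $t\mapsto\Tilde{f_t}$ and $t\mapsto\varphi_t$ are $\mathcal{C}^1$ in the relevant Hölder topologies and that composition is suitably differentiable there (the usual spot where a mild loss of derivatives could intervene), and one must check that $(\Tilde{f_t})_*(\partial_t\varphi_t)$ indeed lies in $T_{\Tilde{f_t}\circ\varphi_t}\mathcal{M}$. All of this is routine within the Banach-manifold calculus recalled from \cite{AT22}, so in the final write-up I would simply state the two-variable chain rule, carry out the two identifications above, and conclude.
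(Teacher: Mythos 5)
Your proof is correct and follows essentially the same route as the paper's: apply the chain rule to $t\mapsto\Tilde{f_t}(\varphi_t(x))$ to split the derivative into $(\partial_t\Tilde{f_t})\circ\varphi_t+(\Tilde{f_t})_*(\partial_t\varphi_t)$, substitute the flow equation \eqref{FG}, and invoke the $\SympA$-equivariance of $\nabla\phi$ from Proposition \ref{prop26}(4). The paper presents this in a single displayed line, while you spell out the pointwise chain rule and the identification of the second term; the content is identical.
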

\noindent
\begin{proof}
Using the $\SympA$-equivariance of $\nabla\phi$ exhibited in \ref{prop26}, we get:
$$
\partial_t(\Tilde{f_t}\circ\varphi_t)=(\partial_t\Tilde{f_t})\circ\varphi_t+(\Tilde{f_t})_*(\partial_t\varphi_t)=-\nabla\phi(\Tilde{f_t})\circ\varphi_t+(\Tilde{f_t})_*(\partial_t\varphi_t)=-\nabla(\Tilde{f_t}\circ\varphi_t)+(\Tilde{f_t})_*(\partial_t\varphi_t).
$$
\end{proof}
\noindent
Hence, we see that one can act on the flow by mean of a one-parameter family of Hamiltonian symplectomorphisms in order to absorb a term of the form $(\Tilde{f_t})_*W_t$, where $\{W_t\}$ is a one-parameter family of Hamiltonian vector field. We then introduce a family of equations, depending on such a family of Hamiltonian vector field, among which we shall exhibit a parabolic equation.
\begin{definition}\label{def29}
We call the following equation:
\begin{equation}\label{presqFGm}
\partial_tf_t=-\nabla\phi(f_t)+(f_t)_*W_t,
\end{equation}
\noindent
the gradient flow associated to the family $\{W_t\}$ of Hamiltonian vector field.
\end{definition}
\begin{proposition}\label{prop210}
Let $\{f_t\}$ be a solution of \eqref{presqFGm}.
\noindent
\begin{enumerate}
    \item 
    Let $\{\varphi_t\}$ be a one-parameter family of symplectomorphisms. Then:
    $$
    \partial_t(f_t\circ\varphi_t)=-\nabla\phi(f_t\circ\varphi_t)+(f_t)_*(W_t\circ\varphi_t+\partial_t\varphi_t)\;;
    $$
    \item The ordinary differential equation
    \begin{equation}\label{ODE1}
    \begin{cases}
    W_s\circ\varphi_s+\partial_s\varphi_s=0\\
    \varphi_0=Id
    \end{cases}
    \end{equation}
    \noindent
    admits a unique solution $\{\varphi_s\}$ (constituted of symplectomorphisms). Moreover, such a solution induces a solution to \eqref{FG}.
\end{enumerate}
\end{proposition}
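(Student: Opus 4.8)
The plan is to deduce part (1) from a one-line chain-rule computation combined with the $\SympA$-equivariance of $\nabla\phi$ recorded in Proposition \ref{prop26}, and then to get part (2) by reading \eqref{ODE1} as the flow equation of a time-dependent Hamiltonian vector field on the compact manifold $\T^2$.

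\emph{Part (1).} Exactly as in the proof of Lemma \ref{lem28}, the chain rule on $\mathcal{M}$ gives
$$
\partial_t(f_t\circ\varphi_t) = (\partial_t f_t)\circ\varphi_t + (f_t)_*(\partial_t\varphi_t),
$$
where $(\partial_t f_t)\circ\varphi_t$ denotes the section $x\mapsto(\partial_t f_t)_{\varphi_t(x)}$ of $(f_t\circ\varphi_t)^*T\T^2$. Substituting \eqref{presqFGm} into the first summand rewrites it as $-\nabla\phi(f_t)\circ\varphi_t + \big((f_t)_*W_t\big)\circ\varphi_t$. Now two elementary facts finish the job: the $\SympA$-equivariance of $\nabla\phi$ from Proposition \ref{prop26} gives $\nabla\phi(f_t)\circ\varphi_t = \nabla\phi(f_t\circ\varphi_t)$ (the $\varphi_t$ being symplectomorphisms), while the definition of the pushforward of sections gives $\big((f_t)_*W_t\big)\circ\varphi_t = (f_t)_*(W_t\circ\varphi_t)$. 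Collecting the $(f_t)_*$ terms produces the asserted identity; Lemma \ref{lem28} is the case $W_t\equiv 0$.

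\emph{Part (2).} Rewrite \eqref{ODE1} as $\partial_s\varphi_s(x) = -(W_s)_{\varphi_s(x)}$, i.e. the non-autonomous flow equation of the vector field $-W_s$ with $\varphi_0 = \mathrm{Id}$. As $\T^2$ is compact and $\{W_s\}$ is $\mathcal{C}^{k,\alpha}$ in space (and regular in $s$), Picard--Lindel\"of yields a unique solution $\{\varphi_s\}$ on a time interval about $0$, each $\varphi_s$ a $\mathcal{C}^{k,\alpha}$ diffeomorphism of $\T^2$. Since each $W_s$ is Hamiltonian, $\mathcal{L}_{W_s}\sigma = d\iota_{W_s}\sigma = 0$, so $\frac{d}{ds}\varphi_s^*\sigma = \varphi_s^*\mathcal{L}_{-W_s}\sigma = 0$ and hence $\varphi_s^*\sigma = \varphi_0^*\sigma = \sigma$; thus the $\varphi_s$ are symplectomorphisms (the isotopy is in fact Hamiltonian). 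Feeding this $\{\varphi_t\}$ into part (1) and using $W_t\circ\varphi_t + \partial_t\varphi_t = 0$ kills the last term, leaving $\partial_t(f_t\circ\varphi_t) = -\nabla\phi(f_t\circ\varphi_t)$ with $f_0\circ\varphi_0 = f_0$, so $\{f_t\circ\varphi_t\}$ solves \eqref{FG}.

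Everything here is formal once the equivariance of Proposition \ref{prop26} is in hand; the one point needing genuine care --- and the main (if routine) obstacle --- is the functional-analytic bookkeeping for \eqref{ODE1}: checking that the flow of a merely $\mathcal{C}^{k,\alpha}$ Hamiltonian vector field remains a curve of $\mathcal{C}^{k,\alpha}$ symplectomorphisms with no loss of derivatives and depends regularly enough on $t$ for $f_t\circ\varphi_t$ to be an admissible solution of \eqref{FG}. This is precisely where the Hölder setup of Definition \ref{def21} and the regularity eventually imposed on $\{W_t\}$ in the DeTurck construction come into play.
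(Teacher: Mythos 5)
Your proof is correct and follows essentially the same route as the paper: the chain rule plus the $\SympA$-equivariance of $\nabla\phi$ for part (1), and reading \eqref{ODE1} as the (inverse) flow of the time-dependent vector field $W_s$ on the compact torus for part (2). The only difference is that you explicitly verify that the flow consists of symplectomorphisms via $\mathcal{L}_{W_s}\sigma=0$, a point the paper leaves implicit, which is a welcome addition.
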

\begin{proof}
\noindent
\begin{enumerate}
\item If $\{f_t\}$ is a solution of \eqref{presqFGm}, the $\SympA$-equivariance of $\nabla\phi$ gives:
\begin{align*}
\partial_t(f_t\circ\varphi_t)&=(\partial_tf_t)\circ\varphi_t+(f_t)_*(\partial_t\varphi_t)\\
&=(-\nabla\phi(f_t)+(f_t)_*W_t)\circ\varphi_t+(f_t)_*(\partial_t\varphi_t)\\
&=-\nabla\phi(f_t\circ\varphi_t)+(f_t)_*(W_t\circ\varphi_t+\partial_t\varphi_t).
\end{align*}
\item The inverse of the flow o $W_s$ is the unique solution to this ODE (more details about the existence and the uniqueness of such flows can be found at the paragraph 3.1 of the third chapter of \cite{CK04}). This solution hence exists in arbitrary long times since $\T^2$ is compact. The last statement of the proposition immediately follows from the first point: just take $\Tilde{f_t}=f_t\circ\varphi_t$. We note that $\{\Tilde{f_t}\}$ exists as soon as $\{f_t\}$ does.
\end{enumerate}
\end{proof}
\noindent
It remains to find the family $\{W_t\}$ that will make the problem \eqref{presqFGm} parabolic. Here is a good anzats.
\begin{definition}\label{def134} Let $f\in\mathcal{M}$. Let
$$
W(f):=-\sharp_\sigma dd^*\iota_{F}\sigma_0,
$$
\noindent
where $F:\R^2\rightarrow\R^2$ is the lift of $f:\T^2\rightarrow\T^2$ (unique up to the action by translation by the lattice), and $\sigma_0$ is the standard sympletic form on $\R^2$. We thus get the modified equation
\begin{equation}\label{FGm}
    \partial_tf_t=-\nabla\phi(f_t)+(f_t)_*W(f_t)
\end{equation}
\end{definition}
\begin{remark}
Note that the form $\sigma$ is induced by $\sigma_0$ (cf \ref{settT2}). Therefore, modulo the point-wise standard identification $T_p\T^2\cong\R^2$ for $p\in\T^2$, we can view vector fields on $\T^2$ as applications $\R^2\rightarrow\R^2$.
\end{remark}
\begin{proposition}\label{prop213}
\noindent
\begin{enumerate}
\item For all diffeomorphism preserving the orientation $f$, $W(f)$ is a Hamiltonian vector field ;
\item $\forall\,f_*Y\in T_f\mathcal{M},\;D_fW(f_*Y)=-\sharp_\sigma dd^*\iota_Yf^*\sigma$ ;
\item The linearisation of \eqref{FGm} is $-D_f\nabla\phi(f_*Y)+\partial_s\left((f_s)_*W(f_0)\right)+f_*\partial_sW(f_s)$, and its highest order term is: 
$$
-f_*\sharp_\sigma((H_f)^2d^*d+dd^*)\iota_Yf^*\sigma\;;
$$
\item The linearisation of the right hand-side of \eqref{FGm} is elliptic, hence the problem is parabolic.
 \end{enumerate}
\end{proposition}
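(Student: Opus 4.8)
The plan is to establish the four assertions in turn, (1)--(2) being short manipulations of the defining expression for $W$, and (3)--(4) forming a single principal-symbol computation.

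For (1), the key remark is that, although the lift $F\colon\R^2\to\R^2$ of $f$ is not $\Lambda$-periodic, its differential is: since $f$ descends to $\T^2$ one has $F(x+\lambda)-F(x)\in\Lambda$ for all $\lambda\in\Lambda$, and by continuity this difference is independent of $x$, so $dF$ is $\Lambda$-periodic. Hence $d^*\iota_F\sigma_0$, being a first-order constant-coefficient differential expression in $F$, descends to a smooth function $h_f$ on $\T^2$ (and $dd^*\iota_F\sigma_0$, hence $W(f)$, is independent of the chosen lift, since changing $F$ by a constant vector changes $\iota_F\sigma_0$ by a constant-coefficient, hence harmonic, $1$-form). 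Then $W(f)=-\sharp_\sigma dh_f$, so $\iota_{W(f)}\sigma=-dh_f$ is exact and $W(f)$ is Hamiltonian. For (2), I would pick a path $\{f_s\}\subset\mathcal{M}$ with $f_0=f$ and $\partial_s|_0f_s=f_*Y$, with lifts $\{F_s\}$; as $\sharp_\sigma$, $d$, $d^*$ and $V\mapsto\iota_V\sigma_0$ are linear and $s$-independent, differentiating $W(f_s)=-\sharp_\sigma dd^*\iota_{F_s}\sigma_0$ gives $D_fW(f_*Y)=-\sharp_\sigma dd^*\iota_{\partial_s|_0F_s}\sigma_0$; identifying $\partial_s|_0F_s$ with the lift of the tangent vector $f_*Y$ and using that $\sigma$ is induced by $\sigma_0$, together with the pointwise identifications between vector fields, sections of $f^*T\T^2$ and $\R^2$-valued maps, one reads off $\iota_{\partial_s|_0F_s}\sigma_0=\iota_Yf^*\sigma$.

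For (3), I would linearise the right-hand side of \eqref{FGm} at $f$ and apply the Leibniz rule to $g\mapsto g_*W(g)$, obtaining $-D_f\nabla\phi(f_*Y)+\partial_s|_0\big((f_s)_*W(f_0)\big)+f_*\,\partial_s|_0W(f_s)$, and then isolate the second-order part in $Y$. By Proposition~\ref{prop26}(5), the first two summands of $D_f\nabla\phi(f_*Y)$ carry at most one derivative of $Y$ (one through $\mathcal{L}_Yf^*\sigma=d\iota_Yf^*\sigma$, the other through the variation of $(f_s)_*=df_s$ applied to a fixed field), while $f_*\sharp_\sigma(H_f)^2d^*d\,\iota_Yf^*\sigma$ is of order two; likewise $\partial_s|_0\big((f_s)_*W(f_0)\big)$ is the variation of $df_s$ applied to the fixed field $W(f)$, hence of order one; and by (2), $f_*\,\partial_s|_0W(f_s)=-f_*\sharp_\sigma dd^*\iota_Yf^*\sigma$ is of order two. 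Collecting the two second-order contributions gives the highest-order term $-f_*\sharp_\sigma\big((H_f)^2d^*d+dd^*\big)\iota_Yf^*\sigma$ of (3); note that the DeTurck term $f_*W$ supplies precisely the $dd^*$ piece missing from $-D_f\nabla\phi$, whose second-order part is degenerate along the gauge directions.

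For (4), I would compute the principal symbol of $Y\mapsto-f_*\sharp_\sigma\big((H_f)^2d^*d+dd^*\big)\iota_Yf^*\sigma$ at $x\in\T^2$ and a covector $\xi\neq0$. Since $f$ preserves orientation, $H_f>0$, so the pointwise maps $f_*$, $\sharp_\sigma$ and $Y\mapsto\iota_Yf^*\sigma=H_f\,\iota_Y\sigma$ are isomorphisms and the symbol is similar to that of $-\big((H_f)^2d^*d+dd^*\big)$ on $1$-forms, with the positive scalar $H_f$ inserted. Splitting $T_x^*\T^2=\R\xi\oplus\xi^\perp$ and recalling that, for $\Delta=dd^*+d^*d$, the symbol of $dd^*$ at $\xi$ is $|\xi|^2\Pi_{\R\xi}$ and that of $d^*d$ is $|\xi|^2\Pi_{\xi^\perp}$ (orthogonal projections onto $\R\xi$ and $\xi^\perp$), the operator $(H_f)^2d^*d+dd^*$ has symbol $|\xi|^2\big((H_f(x))^2\Pi_{\xi^\perp}+\Pi_{\R\xi}\big)$, which is positive definite; hence the principal symbol of the linearised right-hand side of \eqref{FGm} is similar to $-H_f(x)\,|\xi|^2\big((H_f(x))^2\Pi_{\xi^\perp}+\Pi_{\R\xi}\big)$, with eigenvalues $-H_f(x)^3|\xi|^2$ and $-H_f(x)|\xi|^2$, both real and $\leq-c|\xi|^2$ for $c=\min_{\T^2}\min(H_f,H_f^3)>0$. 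So the linearisation is strongly elliptic with negative definite principal symbol, i.e.\ \eqref{FGm} is a strictly parabolic quasilinear equation.

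The step I expect to be the real obstacle is this last one: going from ``the symbol is elliptic'' to the one-sided spectral bound needed for forward parabolicity. The operator is not self-adjoint, because of the factor $H_f$ sandwiched between $d^*d$, $dd^*$ and $\iota_Y\sigma$ and the conjugation by $f_*$, so one must genuinely track that its symbol stays diagonalisable with spectrum in a fixed left half-plane; the similarity reduction to $-H_f|\xi|^2\big((H_f)^2\Pi_{\xi^\perp}+\Pi_{\R\xi}\big)$ above is the clean way to do this, and compactness of $\T^2$ then yields the uniform constant $c$. A minor auxiliary point is to make the order count in (3) rigorous — that no second derivative of $Y$ hides in $\partial_s|_0(f_s)_*$ or in $\partial_s|_0H_{f_s}$ — but this is exactly the sort of Leibniz-rule identity already recorded in Proposition~\ref{prop26}.
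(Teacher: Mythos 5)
Your proposal is correct, and for parts (1)--(3) it follows essentially the same route as the paper: the identity $\iota_{W(f)}\sigma=-dd^*\iota_F\sigma$ exhibiting the Hamiltonian $d^*\iota_F\sigma$, differentiation under the ($s$-independent) linear operators $\sharp_\sigma dd^*$ for (2), and the Leibniz/order-counting argument for (3). You add one point the paper passes over in silence: that the lift $F$ is only defined up to a lattice translation and is not itself periodic, so one must check that $dF$ is $\Lambda$-periodic and that $d^*\iota_F\sigma_0$ descends to $\T^2$ independently of the choice of lift; this is a genuine (if small) gap in the paper's write-up that your argument closes. Where you diverge substantially is part (4). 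The paper writes out $dd^*\alpha$ and $d^*d\alpha$ in coordinates, assembles the $2\times2$ matrix of $\sigma_L(\xi)$, and checks $\det\sigma_L(\xi)=H_f^2(\xi_x^2+\xi_y^2)^2\neq0$; you instead use the symbol identities $\sigma(dd^*)(\xi)=|\xi|^2\Pi_{\R\xi}$ and $\sigma(d^*d)(\xi)=|\xi|^2\Pi_{\xi^\perp}$ to diagonalise $\sigma_L(\xi)=|\xi|^2\bigl(H_f^2\Pi_{\xi^\perp}+\Pi_{\R\xi}\bigr)$ directly. This buys you two things: the computation is coordinate-free and generalises verbatim to higher dimensions (it is essentially what is needed again in Theorem \ref{thm316}), and, more importantly, it gives the one-sided spectral bound $\operatorname{Re}\lambda\leq-c|\xi|^2$ rather than mere invertibility of the symbol. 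The paper only asserts strong ellipticity via the remark that ellipticity and strong ellipticity coincide on compact manifolds, which is not true for general second-order systems (e.g.\ a symbol with eigenvalues of both signs is elliptic but not strongly so); your argument supplies the definiteness that the local-existence theory for parabolic systems actually requires, and is therefore the more complete justification of the final claim.
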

\begin{remark}
Add a comment about (strong) parabolicity for differential equations on a compact manifold and its consequences. $+$ No need to specify that the ellipticity is strong since on a compact manifold, both the ellipticity and the strong ellipticity are equivalent.
\end{remark}
\begin{proof}
\noindent
\begin{enumerate}
\item By definition of $W$, and using that $\flat_\sigma$ is the inverse isomorphism of $\sharp_\sigma$, we get:
$$
\iota_{W(f)}\sigma=\flat_\sigma(W(f))=\flat_\sigma(-\sharp_\sigma dd^*\iota_{F}\sigma)=-dd^*\iota_{F}\sigma.
$$
\noindent
Hence $W(f)$  is the Hamiltonian vector field associated to the Hamiltonian function $d^*\iota_{F}\sigma$.
\item $$
\partial_sW(f_s)=-\partial_s(\sharp_\sigma dd^*\iota_{F_s}\sigma)=-\sharp_\sigma dd^*\partial_s(\iota_{F_s}\sigma)=-\sharp_\sigma dd^*\iota_Yf^*\sigma\text{ où }\partial_sF_s=f_*Y.
$$
\item Recall \eqref{FGm}:
$$
\partial_tf_t=-\nabla\phi(f_t)+(f_t)_*W(f_t).
$$
\noindent
The linearisation of this equation is obtained through the computation of the following quantity:
$$
\left.\frac{\partial}{\partial s}\right|_{s=0}\left(-\nabla\phi(f_s)+(f_s)_*W(f_s)\right),
$$
\noindent
where $\{f_s\}$ is thought of as a perturbation of the equation by a one-parameter family of elements in $\mathcal{M}$ (i.e. $\partial_sf_s=f_*Y$). The formulae follows from the Leibniz rule. Now, since the highest order term of $-\partial_s\nabla\phi(f_s)$ is $-f_*\sharp_\sigma(H_f)^2d^*d\iota_Yf^*\sigma$, and that that of $f_*\partial_sW(f_s)$ is $-f_*\sharp_\sigma dd^*\iota_Yf^*\sigma$ (both are degree 2 terms), we obtain:
$$
-f_*\sharp_\sigma(H_f)^2d^*d\iota_Yf^*\sigma-f_*\sharp_\sigma dd^*\iota_Yf^*\sigma=-f_*\sharp_\sigma((H_f)^2d^*d+dd^*)\iota_Yf^*\sigma.
$$
\noindent
Those expressions must be read point-wise.
\item Whenever $f$ is a diffeomorphism preserving the identity, we have: $H_f>0$. We show that the operator $L:=(H_f)^2d^*d+dd^*$ is elliptic using its principal symbol ; the operator $-f_*\sharp_\sigma$ being invertible, it plays no role. Let $\alpha\in\Omega^1(\T^2)$. Choose a local chart of $\T^2$ and write: 
$$
\alpha=\alpha_xdx+\alpha_ydy,
$$
\noindent
where $\{dx,dy\}$ is an orthonormal basis of $T^*\T^2$ (since $T^*\T^2\cong\T^2\times\R^2$, we can pick the canonical basis of $\R^2$), and $\alpha_x$ and $\alpha_y$ are smooth functions of $\T^2$. We now describe the action of $L$ on $\alpha$.\\
\noindent
\begin{enumerate}
\item
\begin{align*}
dd^*\alpha&=d*d*\alpha=d*d(\alpha_xdy-\alpha_ydx)=d*[(\partial_x\alpha_x+\partial_y\alpha_y)dx\wedge dy]\\
&=d(\partial_x\alpha_x+\partial_y\alpha_y)=(\partial_x^2\alpha_x+\partial_x\partial_y\alpha_y)dx+(\partial_y\partial_x\alpha_x+\partial_y^2\alpha_y)dy.
\end{align*}
\item
\begin{align*}
d^*d\alpha&=*d*d\alpha=*d*[(-\partial_y\alpha_x+\partial_x\alpha_y)dx\wedge dy]=*d(-\partial_y\alpha_x+\partial_x\alpha_y)\\
&=*[(\partial_x^2\alpha_y-\partial_x\partial_y\alpha_x)dx+(\partial_y\partial_x\alpha_y-\partial_y^2\alpha_x)dy]\\
&=(\partial_x^2\alpha_y-\partial_x\partial_y\alpha_x)dy-(\partial_y\partial_x\alpha_y-\partial_y^2\alpha_x)dx\\
&=(\partial_y^2\alpha_x-\partial_y\partial_x\alpha_y)dx+(\partial_x^2\alpha_y-\partial_x\partial_y\alpha_x)dy.
\end{align*}
\item
\begin{align*}
(H_f^2d^*d+dd^*)\alpha&=H_f^2(\partial_y^2\alpha_x-\partial_y\partial_x\alpha_y)dx+H_f^2(\partial_x^2\alpha_y-\partial_x\partial_y\alpha_x)dy\\
&+(\partial_x^2\alpha_x+\partial_x\partial_y\alpha_y)dx+(\partial_y\partial_x\alpha_x+\partial_y^2\alpha_y)dy\\
&=(\partial_x^2\alpha_x+H_f^2\partial_y^2\alpha_x+(1-H_f^2)\partial_y\partial_x\alpha_y)dx\\
&+(\partial_y^2\alpha_y+H_f^2\partial_x^2\alpha_y+(1-H_f^2)\partial_y\partial_x\alpha_x)dy,
\end{align*}
\noindent
where we used Schwarz's lemme on the two functions $\alpha_x$ and $\alpha_y$ ($\partial_x\partial_y=\partial_y\partial_x$).
\end{enumerate}
\noindent
We can now write the principal symbol of $L=H_f^2d^*d+dd^*$:
$$
\sigma(L)(\xi)=\left(\begin{array}{cc}\xi_x^2+H_f^2\xi_y^2&(1-H_f^2)\xi_x\xi_y\\(1-H_f^2)\xi_y\xi_x&H_f^2\xi_y^2+\xi_x^2
\end{array}\right),
$$
\noindent
where $\xi=(\xi_x,\xi_y)\in T^*_p\T^2\cong\R^2$. The operator is elliptic since the principal symbol is invertible:
\begin{align*}
\operatorname{det}\left(\begin{array}{cc}\xi_x^2+H_f^2\xi_y^2&(1-H_f^2)\xi_x\xi_y\\(1-H_f^2)\xi_y\xi_x&H_f^2\xi_y^2+\xi_x^2
\end{array}\right)&=(\xi_x^2+H_f^2\xi_y^2)(H_f^2\xi_y^2+\xi_x^2)-(1-H_f^2)^2\xi_x^2\xi_y^2\\
&=H_f^2\xi_x^4+(1+H_f^4)\xi_x^2\xi_y^2+H_f^2\xi_y^4-(1-2H_f^2+H_f^4)\xi_x^2\xi_y^2\\
&=H_f^2(\xi_x^2+\xi_y^2)^2,
\end{align*}
\noindent
which is non-zero for all non-zero $\xi=(\xi_x,\xi_y)\neq(0,0)$ since $H_f^2>0$.
\end{enumerate}
\end{proof}

\subsection{Theorem for $\T^2$}\label{thmA}
\begin{theorem}
Let $k\in\mathbb{N}$ and $\alpha\in(0,1)$. For all $f\in\mathcal{M}^{k,\alpha}$, there exists $\epsilon(f)>0$ and a unique one-parameter family $\{\Tilde{f_t}\}$ of $\mathcal{M}^{k,\alpha}$ satisfying $\Tilde{f_0}=f$ and that solves the problem \ref{FG} for all $t\in[0,\epsilon)$. Moreover any $\Tilde{f_t}$ is smooth.
\end{theorem}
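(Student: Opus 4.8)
The plan is to run the DeTurck reduction prepared in the previous subsections. One replaces the degenerate flow \eqref{FG} by the modified flow \eqref{FGm}, solves the latter by parabolic PDE theory, and then converts its solution into a solution of \eqref{FG} through the gauge correspondence of Proposition \ref{prop210}, finally checking uniqueness. For the parabolic part: by Proposition \ref{prop213}(3)--(4) the right-hand side of \eqref{FGm} is a quasilinear second-order operator whose linearization at an orientation-preserving diffeomorphism $f$ has principal part $-f_*\sharp_\sigma\bigl((H_f)^2d^*d+dd^*\bigr)\iota_\bullet f^*\sigma$; after stripping off the invertible zero-order bundle maps $-f_*\sharp_\sigma$ and $\iota_\bullet f^*\sigma$ the surviving operator $(H_f)^2d^*d+dd^*$ has positive-definite principal symbol (determinant $H_f^2|\xi|^4$, with $H_f>0$ since $f$ preserves orientation), so \eqref{FGm} is a uniformly strongly parabolic quasilinear system on the compact surface $\T^2$ near $f$. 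I would then invoke the classical short-time theory for such systems in the anisotropic (parabolic) Hölder scale --- linearizing and running an inverse-function-theorem argument between $\mathcal{C}^{k,\alpha}(\T^2)$ and the corresponding parabolic Hölder spaces on $[0,\epsilon]\times\T^2$, in the spirit of the treatment of the Ricci--DeTurck flow in \cite{CK04} --- to get, for every $f\in\mathcal{M}^{k,\alpha}$, an $\epsilon(f)>0$ and a unique $\{f_t\}_{t\in[0,\epsilon)}$ with $f_0=f$ solving \eqref{FGm}, continuous into $\mathcal{M}^{k,\alpha}$. In particular $f_t$ stays an orientation-preserving diffeomorphism with $H_{f_t}$ bounded below for $t$ small (so the equation stays parabolic along the solution), and parabolic Schauder bootstrapping makes $f_t$ jointly smooth in $(t,x)$ for all $t\in(0,\epsilon)$.

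Next I would transfer and prove uniqueness. Given the DeTurck solution $\{f_t\}$, set $W_t:=W(f_t)$; these are Hamiltonian vector fields by Proposition \ref{prop213}(1), so by Proposition \ref{prop210}(2) the ODE \eqref{ODE1} has a unique solution $\{\varphi_t\}$ consisting of Hamiltonian symplectomorphisms and defined on all of $[0,\epsilon)$ (it is the inverse flow of the time-dependent field $W_t$, global in time since $\T^2$ is compact), and by Proposition \ref{prop210}(1) the family $\widetilde{f_t}:=f_t\circ\varphi_t$ solves \eqref{FG} with $\widetilde{f_0}=f$. For uniqueness I would run the usual DeTurck argument: if $\{\widetilde{g_t}\}$ is any solution of \eqref{FG} with $\widetilde{g_0}=f$, Lemma \ref{lem28} shows that a family $\{\psi_t\}$ of Hamiltonian symplectomorphisms with $\psi_0=\mathrm{Id}$ makes $\widetilde{g_t}\circ\psi_t$ solve \eqref{FGm} precisely when $(\widetilde{g_t})_*(\partial_t\psi_t)=(\widetilde{g_t}\circ\psi_t)_*W(\widetilde{g_t}\circ\psi_t)$, i.e. when $\{\psi_t\}$ solves a first-order ODE in the Banach Lie group of Hamiltonian symplectomorphisms with locally Lipschitz right-hand side, hence short-time solvable; then $\widetilde{g_t}\circ\psi_t$ and $f_t$ both solve \eqref{FGm} with datum $f$, so they coincide by the uniqueness just established, and substituting $\widetilde{g_t}\circ\psi_t=f_t$ back shows that $\psi_t^{-1}$ satisfies \eqref{ODE1} with the same $W_s=W(f_s)$ and initial condition as $\varphi_t$. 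Hence $\psi_t^{-1}=\varphi_t$, so $\widetilde{g_t}=f_t\circ\varphi_t=\widetilde{f_t}$.

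The main obstacle is twofold. On the technical side one must genuinely import the quasilinear parabolic short-time existence and Schauder theory into the Banach-manifold/Hölder framework of \S\ref{settT2}, checking in particular that the mild, uniform-in-time degeneracy carried by the coefficient $(H_{f_t})^2$ is harmless because $H_{f_t}$ stays bounded away from $0$ for small $t$. The subtler point is the regularity of the transferred family $\widetilde{f_t}$: the DeTurck field $W(\cdot)$ is a genuine second-order differential operator in its argument (equivalently, $W(f)$ is the Hamiltonian vector field of the first-order quantity $d^*\iota_F\sigma$), so a priori the gauge transformations $\varphi_t$, being flows of $W_t$, are two derivatives less regular than $f_t$; one has to exploit the parabolic smoothing of $\{f_t\}$ --- which makes $W_t$ and hence $\varphi_t$ smooth on every interval $[t_1,\epsilon)$ with $t_1>0$ --- to conclude that $\widetilde{f_t}=f_t\circ\varphi_t$ does land in $\mathcal{M}^{k,\alpha}$ and is smooth for all $t\in(0,\epsilon)$. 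Setting up carefully the gauge-fixing ODE used in the uniqueness step, together with its short-time solvability in the infinite-dimensional group, belongs to the same circle of technical issues.
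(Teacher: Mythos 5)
Your existence argument is exactly the paper's: solve the DeTurck-modified flow \eqref{FGm} by strongly parabolic short-time theory (Proposition \ref{prop213} giving the symbol $H_f^2|\xi|^4$), integrate the honest ODE \eqref{ODE1} for the now-known time-dependent field $W(f_t)$, and compose to get a solution of \eqref{FG}, with smoothness by parabolic bootstrapping. Your observation that $W(f_t)$ is two derivatives less regular than $f_t$, so that $\varphi_t$ and hence $\widetilde{f_t}=f_t\circ\varphi_t$ a priori drop out of $\mathcal{C}^{k,\alpha}$ unless one invokes instantaneous parabolic smoothing, is a genuine point that the paper's proof glosses over (it simply asserts that $\varphi_t$ is ``as regular as $f_t$''); your fix via smoothing for $t>0$ is the right one.

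The gap is in your uniqueness step, which goes beyond what the paper actually proves (the paper only establishes uniqueness for \eqref{FGm}). You claim that the gauge-fixing condition $\partial_t\psi_t=(\psi_t)_*W(\widetilde{g_t}\circ\psi_t)$ is ``a first-order ODE in the Banach Lie group of Hamiltonian symplectomorphisms with locally Lipschitz right-hand side.'' It is not: by Definition \ref{def134}, $W(g)=-\sharp_\sigma dd^*\iota_G\sigma_0$ differentiates its argument twice, so $W(\widetilde{g_t}\circ\psi_t)$ contains second spatial derivatives of the unknown $\psi_t$. The right-hand side is therefore an unbounded operator on any fixed H\"older space, not Lipschitz, and the equation is a second-order PDE for $\psi_t$ --- this is precisely the point where the Ricci--DeTurck uniqueness argument requires the harmonic map heat flow rather than an ODE. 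Worse, the principal part here is built from $dd^*$ alone, which is degenerate on $1$-forms; to salvage the step you would have to restrict to the Hamiltonian potential (write $\partial_t\psi_t=(\psi_t)_*X_{h_t}$ and derive the induced scalar evolution for $h_t$, whose leading term is a weighted Laplacian of the form $d^*(H\,dh_t)$ with $H>0$, hence parabolic) and prove short-time solvability of that scalar parabolic equation. As written, the sentence ``hence short-time solvable'' is unjustified, and uniqueness of \eqref{FG} does not follow. Note this does not put you behind the paper --- its own proof never addresses uniqueness of \eqref{FG} at all --- but the step as you state it would fail.
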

\begin{proof}
The proof is decomposed into 4 pieces.\\
\noindent
\begin{enumerate}
\item The equation \eqref{FGm} is parabolic by proposition \ref{prop213} so the result is certainly true for this equation with initial condition $f_0=f\in\mathcal{M}^{k,\alpha}$: a unique solution $\{f_t\}$ of \eqref{FGm} exists on a time interval $[0,\epsilon(f))$ (usual local existence theorem for strongly parabolic flow cf \cite{AL00} chapter 7 and 8).
\item We now solve the ODE \eqref{ODE1} for the family $\{W(f_t)\}$. It was explained in \ref{prop210} that since $\T^2$ is compact, a solution of this equation is unique and exists for as long as $\{f_t\}$ exists. Moreover, it is as regular as $\{f_t\}$.
\item Hence, the proposition \ref{prop213} gives a solution of \eqref{FG}: $\{\Tilde{f_t}:=f_t\circ\varphi_t\}$. This solution exists for all $t\in[0,\epsilon(f))$ and must be as regular as the least regular of the two applications of the composition (hence as regular as $\{f_t\}$).
\item We end the discussion with the regularity. The elliptic regularity for compact manifolds implies that the family $\{f_t\}$ is smooth by bootstrapping, and since the solution of \eqref{ODE1} is as regular as the family $\{f_t\}$, it follows that $\{\Tilde{f_t}\}$ is smooth as a composition of smooth functions.
\end{enumerate}
\end{proof}

\section{Case of $\T^4$}
\noindent
The case of $\T^4$ follows exactly the same pattern as that of $\T^2$. We shall see however that the moment map picture comes from a different structure: the HyperKähler structure on $\T^4$. 
\subsection{Setting and properties}\label{settT4}
\noindent
Let us describe the setting we will work with throughout this section. Recall that we equip $\T^4$ with forms $\omega$, $\omega_I$, $\omega_J$ and $\omega_K$ (cf \ref{nota}).
\begin{definition}\label{def31}
We equip $T\T^4$ with the following scalar product:
$$
G(\cdot,\cdot)=\int_{\T^4}g(\cdot,\cdot)\text{vol},
$$
\noindent
where $\text{vol}=-\frac{1}{2}\,\omega\wedge\omega$ is the usual volume form associated to the symplectic form $\omega$.
\end{definition}
\begin{remark}
This definition can be extended to any tensors, just like in the case of $\T^2$.
\end{remark}
\begin{definition}\label{def33}
The set of interest to construct the moment map here will be:
$$
\mathcal{M}:=\{f\in\text{Diff}(\T^4)\;|\;f\sim Id\}
$$
\noindent
where $f\sim Id$ means that $f$ is homotopic to the identity. The tangent space to $\mathcal{M}$ at an application $f$ is the space of parallel vector fields $u$ making the following diagram commute:
$$
\xymatrix {
    & T\T^4=\T^4\times\mathbb{H} \ar[d]^\pi \\
    \T^4 \ar[ur]^u \ar[r]_f & \T^4
  }
$$
\noindent
We can describe such parallel vector fields using a curve $\{f_t\}$ of $\mathcal{M}$ such that $\left\{\begin{array}{ll}\pdt{f_t}=u\\f_0=f\end{array}\right.$. In particular, we see that each $u\in T_f\mathcal{M}$ can be written in a unique way as $f_*X$, where $X$ is a (usual) vector field. The infinite dimensional Lie group $\mathcal{G}=Ham(\T^4,\omega)$ acts on $\mathcal{M}$ by precomposition.
\end{definition}
\begin{remark}
Note that we didn't specify in the above definition the regularity of the applications. We when ask all of them to be of class $\mathcal{C}^{k,\alpha}$ with $k\in\mathbb{N}$ and $\alpha\in(0,1)$, all spaces of sections and diffeomorphisms above are Banach manifolds (cf \cite{AT22} paragraph 1.7 page 14).
\end{remark}
\begin{proposition}\label{prop35}
The action of each complex structure can be extended on all vector fields point-wise. For example, the left multiplication by $i$ on $\mathbb{H}$ produces a complex structure $I$ acting on $T\T^4$ as: 
$$
\forall\;X\in\Gamma(T\T^4),\;\forall\;x\in\T^4,\;(IX)_x=I_xX_x=iX_x\,.
$$
\noindent
Let $\bullet\in\{I,J,K\}$, and define $\Omega_\bullet$ by:
$$
\Omega_\bullet(\cdot,\cdot)=\int_{\T^4}\omega_\bullet(\cdot,\cdot)vol=\int_{\T^4}g(\bullet\cdot,\cdot)vol=G(\bullet\cdot,\cdot).
$$
\noindent
Then $(\mathcal{M}\,,\,\Omega_{I}\,,\,\Omega_{J}\,,\,\Omega_{K})$ is a HyperKähler Banach manifold. Moreover, each $\Omega_\bullet$ is $\SympB$-invariant.
\end{proposition}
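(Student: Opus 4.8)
The plan is to transcribe, nearly verbatim, the proof of Proposition~\ref{prop23}; the only genuinely new feature is that we now carry three Kähler forms linked by the quaternionic relations rather than a single symplectic form. First I would record the pointwise linear algebra and observe that $\mathcal{M}$ is, in effect, flat. Since $T\T^4=\T^4\times\mathbb{H}$, lifting a diffeomorphism $f\sim\operatorname{Id}$ to $\mathbb{H}$ identifies $\mathcal{M}$ with an open subset of the affine Banach space $\operatorname{Id}+\mathcal{C}^{k,\alpha}(\T^4,\mathbb{H})$, so the tangent space of Definition~\ref{def33} becomes $T_f\mathcal{M}\cong\mathcal{C}^{k,\alpha}(\T^4,\mathbb{H})$ \emph{uniformly in $f$}, and under this identification the fibrewise operators $I,J,K$ (quaternionic multiplication) and the bilinear form $G$ do not depend on the base point $f$. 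The relations $I^2=J^2=K^2=IJK=-\operatorname{Id}$ (up to the harmless sign produced by left- versus right-multiplication, cf.~\ref{nota}) and the $g$-orthogonality of $I,J,K$ hold fibrewise on $\T^4$ by the HyperKähler structure recalled in \ref{nota}; integrating $g(\bullet u,\bullet v)=g(u,v)$ against $\text{vol}>0$ carries them over to $\mathcal{M}$, giving $G(\bullet u,\bullet v)=G(u,v)$ and hence $\Omega_\bullet(u,v)=G(\bullet u,v)=-G(u,\bullet v)$. Because all these tensors are constant in the affine chart, $I,J,K$ are parallel for $G$ (so, a fortiori, integrable) and the forms $\Omega_\bullet$ are parallel, in particular closed; thus $(\mathcal{M},G,I,J,K)$ is genuinely HyperKähler.

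For completeness --- and in the spirit of Proposition~\ref{prop23} --- I would also derive closedness directly: by the Banach-manifold calculus of differential forms (\cite{AT22}, \S1.9) the exterior differential commutes with $\int_{\T^4}$ and sees only the factor $\omega_\bullet(\cdot,\cdot)$, so $d\Omega_\bullet=\int_{\T^4}d\omega_\bullet(\cdot,\cdot)\,\text{vol}=0$ since each $\omega_\bullet$ is closed on $\T^4$. Positivity of $G$ and weak non-degeneracy of $\Omega_\bullet$ are checked fibrewise exactly as for $\T^2$: if $u\neq0$ then $G(u,u)=\int_{\T^4}g(u,u)\,\text{vol}>0$ because $\text{vol}>0$ and $g$ is positive definite, and for the same reason $\Omega_\bullet(u,\bullet u)=G(\bullet u,\bullet u)=G(u,u)>0$, so $u\mapsto\Omega_\bullet(u,\cdot)$ is injective.

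It remains to prove the $\SympB$-invariance. For $\varphi\in\SympB$ (in particular for any $\varphi\in Ham(\T^4,\omega)$, the group that actually acts), put $\psi_\varphi(f)=f\circ\varphi$, with differential $D_f\psi_\varphi(u)=u\circ\varphi$. Viewing $\omega_\bullet(u,v)$ as a function on $\T^4$, one has the pointwise identity $\omega_\bullet(u\circ\varphi,v\circ\varphi)=\varphi^*\big(\omega_\bullet(u,v)\big)$, whence
\[
(\psi_\varphi^*\Omega_\bullet)(u,v)=\int_{\T^4}\varphi^*\big(\omega_\bullet(u,v)\big)\,\text{vol}=\int_{\T^4}\varphi^*\big(\omega_\bullet(u,v)\big)\,\varphi^*\text{vol}=\int_{\T^4}\omega_\bullet(u,v)\,\text{vol}=\Omega_\bullet(u,v),
\]
where I used $\varphi^*\text{vol}=\text{vol}$ --- a consequence of $\varphi^*\omega=\omega$ --- together with the change-of-variables formula, $\varphi$ being an orientation-preserving diffeomorphism; note that $\varphi^*\omega_\bullet=\omega_\bullet$ is \emph{not} needed. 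Running the identical computation with $g(u,v)$ in place of $\omega_\bullet(u,v)$ gives the $\SympB$-invariance of $G$. I do not expect a real obstacle, the whole argument being the faithful analogue of the two-dimensional case; the one point deserving a moment's care --- bookkeeping rather than an obstruction --- is justifying the word ``HyperKähler'', which, as explained above, reduces to the flatness of $\T^4$, and the circumstance that $\text{vol}$ is adapted to $\omega$ while the $\omega_\bullet$ induce the opposite orientation plays no role, since $G$ and the compatibility relations involve only the genuine metric $g$ and the measure $|\text{vol}|$.
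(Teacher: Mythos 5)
Your proof is correct and follows essentially the same route as the paper, whose own proof of this proposition is just two sentences: repeat the argument of Proposition~\ref{prop23} verbatim for each $\Omega_\bullet$ (closedness by commuting $d$ with $\int_{\T^4}$, non-degeneracy from positivity of $g$, invariance from $\varphi^*\mathrm{vol}=\mathrm{vol}$ and change of variables) and invoke the fibrewise quaternionic relations for the HyperKähler claim. If anything you are more careful than the paper, which does not spell out the affine-chart/parallelism justification of ``HyperKähler'', the left-versus-right multiplication sign, or the observation that $\varphi^*\omega_\bullet=\omega_\bullet$ is never needed.
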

\begin{proof}
The proof that each $\Omega_\bullet$ is $\SympB$-invariant and produces a Kähler structure on $\mathcal{M}$ is exactly the same as in the case of $\T^2$ (cf \ref{prop23}). The fact that $\Omega_{I}$, $\Omega_{J}$ and $\Omega_{K}$ give a HyperKähler structure on $\mathcal{M}$ directly follows from the quaternionic relations between $I$, $J$ and $K$ on each tangent space.
\end{proof}
\noindent
Recall that $Lie(\mathcal{G})$ (the Lie algebra of $\mathcal{G}=Ham(\T^4,\omega)$) is isomorphic to ${\mathcal{C}}_0^\infty(\T^4)$. In order to determine the moment map associated to this action for one of the forms $\Omega_{\bullet}$, we shall find the unique solution (up to a constant) of the problem:
$$
\Omega_{\bullet}(f_*X_h\,,\,f_*Y)=-\LD{D_f\mu(f_*Y)}{h}
$$
where $h\in\mathcal{C}^\infty_0(\T^4)$, $f\in\mathcal{M}$, $f_*Y\in T_f\mathcal{M}$, and $X_h$ is the unique vector field such that $\iota_{X_h}\omega=-dh$ (the infinitesimal action of $\mathcal{G}$ on $\mathcal{M}$ is given by $f_*X_h$, for $f\in\mathcal{M}$ and $X_h\in\Gamma_{ham}(T\T^4,\omega)$).

\subsection{Moment map and gradient flow}
\noindent
\begin{theorem}\label{thm36}
The moment map associated to $\Omega_\bullet$ for the action of $\mathcal{G}$ on $\mathcal{M}$ (where $\bullet\in\{I,J,K\}$) is given by:
$$
\mu_\mathcal{\bullet}(f):=-\frac{1}{\text{vol}}(f^*\omega_{\bullet}\wedge\omega).
$$
\noindent
Together, they form a so-called HyperKähler moment map: $\underline{\mu}=(\mu_{I},\mu_{J},\mu_{K})$.
\end{theorem}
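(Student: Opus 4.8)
The plan is to follow the template of the proof of Theorem~\ref{thm24}, the only genuinely new feature being that the interior-product and Stokes manipulations now take place on a four-manifold. Fix $\bullet\in\{I,J,K\}$. Three points must be checked: (i) $\mu_\bullet$ has mean zero, hence is a well-defined map into the model $\mathcal{C}_0^{k-1,\alpha}(\T^4)$ for $Lie(\mathcal{G})$; (ii) the infinitesimal moment-map identity $\Omega_\bullet(f_*X_h,f_*Y)=-\LD{D_f\mu_\bullet(f_*Y)}{h}$ for all $f\in\mathcal{M}$, $f_*Y\in T_f\mathcal{M}$, $h\in\mathcal{C}^\infty_0(\T^4)$; and (iii) the $\mathcal{G}$-equivariance of $\mu_\bullet$. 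Of these, (i) and (iii) are short and closely parallel the $\T^2$ case; (ii) is the computational core. Once each $\mu_\bullet$ is the moment map for the corresponding Kähler form $\Omega_\bullet$ of the hyperKähler structure of Proposition~\ref{prop35}, the assertion that $\underline{\mu}=(\mu_I,\mu_J,\mu_K)$ is a hyperKähler moment map is just this bundled statement: no compatibility along the $2$-sphere of complex structures needs to be checked here, since $\mathcal{G}$ fixes $I$, $J$ and $K$ pointwise.

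For (ii) I would compute the left-hand side directly. By the definitions of $\Omega_\bullet$ and of the pushforward, $\Omega_\bullet(f_*X_h,f_*Y)=\int_{\T^4}(f^*\omega_\bullet)(X_h,Y)\,\text{vol}=-\int_{\T^4}(\iota_{X_h}\iota_Y f^*\omega_\bullet)\,\text{vol}$. Contracting the identically vanishing five-form $(\iota_Y f^*\omega_\bullet)\wedge\text{vol}$ with $X_h$ gives $(\iota_{X_h}\iota_Y f^*\omega_\bullet)\,\text{vol}=(\iota_Y f^*\omega_\bullet)\wedge\iota_{X_h}\text{vol}$; and since $\text{vol}=-\tfrac12\,\omega\wedge\omega$, $\iota_{X_h}\omega=-dh$, and a one-form wedge-commutes with a two-form, we get $\iota_{X_h}\text{vol}=-(\iota_{X_h}\omega)\wedge\omega=dh\wedge\omega$. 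Reordering one-forms, the left-hand side becomes $\int_{\T^4}dh\wedge\iota_Y f^*\omega_\bullet\wedge\omega$, and a single integration by parts (Stokes, using $d\omega=0$ and $d(f^*\omega_\bullet)=0$) turns it into $-\int_{\T^4}h\,(d\iota_Y f^*\omega_\bullet)\wedge\omega$. On the other hand, differentiating $\mu_\bullet(f)\,\text{vol}=-f^*\omega_\bullet\wedge\omega$ along a curve with $\partial_sf_s=f_*Y$ and applying Cartan's formula together with $d(f^*\omega_\bullet)=0$ yields $\big(D_f\mu_\bullet(f_*Y)\big)\text{vol}=-(\mathcal{L}_Yf^*\omega_\bullet)\wedge\omega=-(d\iota_Y f^*\omega_\bullet)\wedge\omega$; matching the two expressions through the $L^2$-pairing $\LD{\cdot}{\cdot}$ gives the stated identity. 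The sign bookkeeping parallels Theorem~\ref{thm24}, with the one proviso that $\text{vol}=-\tfrac12\,\omega\wedge\omega$ contributes an extra sign at the step $\iota_{X_h}\text{vol}=dh\wedge\omega$ compared with the surface case, where the vanishing of all three-forms on a surface did this more cheaply.

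Point (i) is where the hyperKähler geometry enters: since $f\sim\mathrm{Id}$ we have $[f^*\omega_\bullet]=[\omega_\bullet]$ in $H^2(\T^4)$, hence $\int_{\T^4}\mu_\bullet(f)\,\text{vol}=-\int_{\T^4}f^*\omega_\bullet\wedge\omega=-\int_{\T^4}\omega_\bullet\wedge\omega$, and this last integral vanishes because $\omega_\bullet$ and $\omega$ are respectively self-dual and anti-self-dual for the orientation determined by $\text{vol}$ --- indeed, reading off the local expressions of \S\ref{nota} one sees $\omega_\bullet\wedge\omega=0$ pointwise. Point (iii) is, word for word, the $\T^2$ argument: for $\varphi\in\mathcal{G}\subset\SympB$ one has $\varphi^*\omega=\omega$, so $\varphi^*\text{vol}=\text{vol}$, and therefore $\mu_\bullet(f\circ\varphi)=-\tfrac{1}{\text{vol}}\big(\varphi^*f^*\omega_\bullet\wedge\omega\big)=-\tfrac{1}{\varphi^*\text{vol}}\,\varphi^*\big(f^*\omega_\bullet\wedge\omega\big)=\varphi^*\mu_\bullet(f)=\mu_\bullet(f)\circ\varphi$. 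I expect the main obstacle to be bookkeeping rather than anything conceptual: pushing the cascade of contraction, sign and Stokes identities of Step (ii) through cleanly on $\T^4$ --- in particular the $\iota_{X_h}\text{vol}=dh\wedge\omega$ computation --- and verifying that $\mu_\bullet$ and its differential are smooth maps of the relevant Hölder--Banach manifolds so that both $\LD{\cdot}{\cdot}$ and $D_f\mu_\bullet$ are legitimate; this last point is routine given \cite{AT22} but should be recorded.
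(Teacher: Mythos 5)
Your proposal is correct and follows essentially the same route as the paper: the core computation (contracting the vanishing five-form $\iota_Yf^*\omega_\bullet\wedge\mathrm{vol}$ with $X_h$, evaluating $\iota_{X_h}\mathrm{vol}$ via $\iota_{X_h}\omega=-dh$, one application of Stokes, and identifying $D_f\mu_\bullet(f_*Y)$ through the Cartan formula) is exactly the paper's argument. The only differences are organizational --- the paper defers equivariance to Proposition~\ref{prop39} and does not record the mean-zero check, both of which you include --- plus your more careful tracking of the sign coming from $\mathrm{vol}=-\tfrac12\,\omega\wedge\omega$ (the paper's own proof silently uses $+\tfrac12\,\omega\wedge\omega$ at that step).
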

\begin{proof}
We detail the computations for $\mu_{I}$ ; those of $\mu_J$ and $\mu_K$ are identical.
$$
\Omega_{I}(f_*X_h\,,\,f_*Y)=\int_{\T^4}(f^*\omega_I)(X_h\,,\,Y)\text{vol}=-\int_{\T^4}(\iota_{X_h}\iota_Yf^*\omega_I)\text{vol}.
$$
\noindent
Since the contraction is an anti-derivation, we have:
$$
0=\iota_{X_h}(\iota_Yf^*\omega_I\wedge \text{vol})=(\iota_{X_h}\iota_Yf^*\omega_I)\text{vol}-(\iota_Yf^*\omega_I)\wedge(\iota_{X_h}\text{vol}).
$$
\noindent
On the other hand: $\iota_{X_h}\text{vol}=\iota_{X_h}\,(\frac{1}{2}\,\omega\wedge\omega)=\frac{1}{2}((\iota_{X_h}\omega)\wedge\omega+\omega\wedge(\iota_{X_h}\omega))=(\iota_{X_h}\omega)\wedge\omega$, so that:
$$
\Omega_{I}(f_*X_h\,,\,f_*Y)=-\int_{\T^4}(\iota_Yf^*\omega_I)\wedge\iota_{X_h}\omega\wedge\omega=\int_{\T^4}(\iota_Yf^*\omega_I)\wedge dh\wedge\omega=-\int_{\T^4}dh\wedge(\iota_Yf^*\omega_I)\wedge\omega.
$$
\noindent
We then apply Stokes theorem (with $\partial \T^4=\emptyset\implies\int_{\partial \T^4}=0$) to the form:
$$
d(h\iota_Yf^*\omega_I\wedge\omega)=dh\wedge(\iota_Yf^*\omega_I)\wedge\omega + hd(\iota_Yf^*\omega_I)\wedge\omega,
$$
\noindent
and we get:
$$
\Omega_{I}(f_*X_h\,,\,f_*Y)=\int_{\T^4} hd(\iota_Yf^*\omega_I)\wedge\omega=\int_{\T^4} h\frac{d(\iota_Yf^*\omega_I)\wedge\omega}{\text{vol}}\text{vol}=-\LD{-\frac{d(\iota_Yf^*\omega_I)\wedge\omega}{\text{vol}}}{h}.
$$
\noindent
The remain thing to be highlighted to finish this proof is: 
\begin{align*}
\partial_t(f_t^*\omega_I\wedge\omega)&=\partial_t((f\circ f^{-1}\circ f_t)^*\omega_I)\wedge\omega=\partial_t((f^{-1}\circ f_t)^*(f^*\omega_I))\wedge\omega\\
&=\mathcal{L}_Y(f^*\omega_I)\wedge\omega=(d\iota_Y+\iota_Yd)(f^*\omega_I)\wedge\omega=d(\iota_Yf^*\omega_I)\wedge\omega.
\end{align*}
\end{proof}
\noindent
We now show that the zero locus of $\underline{\mu}$ is exactly $\SympB$. To achieve this we recall some aspects of Hodge theory in real dimension four. Recall the star operator $*$ that acts on differential forms:
$$
\forall\;k\in\{1\,,\,2\,,\,3\,,\,4\},\;\forall\;\alpha\in\Omega^k(\T^4),\;\forall\;\beta\in\Omega^{k}(\T^4),\;\alpha\wedge*\beta=g(\alpha\,,\,\beta)vol.
$$
\noindent
The square of this operator satisfies:
$$
\forall\;k\in\{1\,,\,2\,,\,3\,,\,4\},\;\forall\;\alpha\in\Omega^k(\T^4),\;**\alpha=(-1)^{k(4-k)}\alpha.
$$
\noindent
For $k=2$, we get: $\left.*\right|_{\Omega^2(\T^4)}^2=Id$, which produces a $g$-orthogonal decomposition of $\Omega^2(\T^4)$ in eigen-spaces:
$$
\Omega^2(\T^4)=\Omega^2_+(\T^4)\oplus^{\perp_g}\Omega^2_-(\T^4)
$$
\noindent
where $\Omega^2_\pm(\T^4):=\{\alpha\in\Omega^2(\T^4)\;|\;*\alpha=\pm\alpha\}$. We call the elements of $\Omega^2_-(\T^4)$ anti-self-dual 2-forms, and those of $\Omega^2_+(\T^4)$ self-dual 2-forms.
\begin{lemma}\label{lem37}
The family of 2-forms $\{\omega_\bullet\}_{\bullet\in\{I,J,K\}}$ is a basis for $\Omega^2_-(\T^4)$.
\end{lemma}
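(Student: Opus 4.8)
The plan is to reduce the lemma to a pointwise linear-algebra computation. The forms $\omega_I,\omega_J,\omega_K$ and the bundle $\Lambda^2_-T^*\T^4$ are all parallel for the flat structure, so it is enough to argue fiberwise at one point $x\in\T^4$, identifying $\Lambda^2T^*_x\T^4$ with $\Lambda^2(\R^4)^*$ via the coframe $\{dx_1,dx_2,dx_3,dx_4\}$. Everything then comes down to two things: the dimension of the anti-self-dual part, and explicit expressions for the $\omega_\bullet$.

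First I would record the $\pm1$-eigenspaces of $*$ on $\Lambda^2(\R^4)^*$ with respect to the orientation $\operatorname{vol}=-\tfrac12\,\omega\wedge\omega=dx_1\wedge dx_2\wedge dx_3\wedge dx_4$. Inspecting the action of $*$ on the six basic $2$-forms shows that the fiber $\Lambda^2_-T^*_x\T^4$ is the $3$-dimensional span of $dx_1\wedge dx_2-dx_3\wedge dx_4$, $dx_1\wedge dx_3+dx_2\wedge dx_4$ and $dx_1\wedge dx_4-dx_2\wedge dx_3$ (and $\Lambda^2_+T^*_x\T^4$ the span of the three corresponding sum-combinations). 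Since $\dim_\R\Lambda^2_-T^*_x\T^4=3$, it then suffices to check that $\omega_I,\omega_J,\omega_K$ are anti-self-dual and linearly independent.

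Next I would compute the $\omega_\bullet$ from their definition $\omega_\bullet(X,Y)=g(\bullet X,Y)$, where $I,J,K$ act on tangent vectors by right multiplication by $i,j,k$ on $\mathbb{H}$ (write these $R_i,R_j,R_k$). Expanding $R_i,R_j,R_k$ on the basis $(1,i,j,k)=(\partial_{x_1},\partial_{x_2},\partial_{x_3},\partial_{x_4})$ using the quaternion relations and reading off the corresponding $2$-forms gives $\omega_I=dx_1\wedge dx_2-dx_3\wedge dx_4$, $\omega_J=dx_1\wedge dx_3+dx_2\wedge dx_4$ and $\omega_K=dx_1\wedge dx_4-dx_2\wedge dx_3$ — that is, precisely the three generators of $\Lambda^2_-T^*_x\T^4$ found above. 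Comparing with the first step, each $\omega_\bullet$ satisfies $*\omega_\bullet=-\omega_\bullet$ and the three are visibly a basis of $\Lambda^2_-T^*_x\T^4$ at every point, which is the lemma. (A coordinate-free variant: each $(g,\bullet,\omega_\bullet)$ is a Kähler triple, so $\omega_\bullet$ is self-dual for the orientation it induces; for a right-multiplication complex structure on $\mathbb{H}$ this is the orientation opposite to the one defining $*$, hence $\omega_\bullet\in\Omega^2_-(\T^4)$, and linear independence follows from that of $R_i,R_j,R_k$ as endomorphisms.)

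There is no genuine obstacle here, the argument being a finite computation; the one thing that demands care is the bookkeeping of signs — the sign in $*$ produced by the normalization $\operatorname{vol}=-\tfrac12\,\omega\wedge\omega$ (rather than $+\tfrac12\,\omega\wedge\omega$), and the signs arising when $R_i,R_j,R_k$ are turned into $2$-forms. A convenient way to pin down both anti-self-duality and independence simultaneously is the identity $\omega_\bullet\wedge\omega_{\bullet'}=-2\,\delta_{\bullet\bullet'}\operatorname{vol}$: the negative diagonal entries force $*\omega_\bullet=-\omega_\bullet$ (a Kähler form is always a $*$-eigenform, and a self-dual one would give $\omega_\bullet\wedge*\omega_\bullet=g(\omega_\bullet,\omega_\bullet)\operatorname{vol}>0$), while the Gram matrix $-2\,\mathrm{Id}$ forces the three forms to be linearly independent.
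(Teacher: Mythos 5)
Your argument is correct and is essentially the paper's proof: both reduce to a pointwise computation, use the rank-$3$ eigenspace decomposition of $*$ on the six basic forms $dx_i\wedge dx_j$, and identify $\omega_I,\omega_J,\omega_K$ with the three anti-self-dual generators $d_{12}-d_{34}$, $d_{13}+d_{24}$, $d_{14}-d_{23}$. You merely supply two details the paper leaves implicit — the explicit derivation of the $\omega_\bullet$ from the quaternion relations, and the alternative check via $\omega_\bullet\wedge\omega_{\bullet'}=-2\delta_{\bullet\bullet'}\operatorname{vol}$ — both of which are consistent with the paper's conventions.
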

\begin{proof}
Start by noticing that the degree two cohomology of $\T^4$ has dimension $\binom{4}{2}=6$, in particular: $\dim(\Omega^2_-(\T^4))=\dim(\Omega^2_+(\T^4))=3$. In local coordinates, $\Omega^2(\T^4)$ is generated by an orthonormal basis of the form $\{d_{ij}\}_{1\leq i<j\leq 4}$ (where $d_{ij}:=dx_i\wedge dx_j$). Given that
$$
\begin{array}{ccc}
*d_{12}=d_{34} & *d_{13}=-d_{24} & *d_{14}=d_{23}\\
*d_{23}=d_{14} & *d_{24}=-d_{13} & *d_{34}=d_{12},
\end{array}
$$
\noindent
we can construct an orthogonal basis that preserves the decomposition $\Omega^2(\T^4)=\Omega^2_+(\T^4)\oplus\Omega^2_-(\T^4)$:
$$
\{d_{12}+d_{34},d_{14}+d_{23},d_{13}-d_{24},d_{12}-d_{34},d_{14}-d_{23},d_{13}+d_{24}\},
$$
\noindent
where $\operatorname{Span}\{d_{12}+d_{34},d_{14}+d_{23},d_{13}-d_{24}\}=\Omega^2_+(\T^4)$ and $\operatorname{Span}\{d_{12}-d_{34},d_{14}-d_{23},d_{13}+d_{24}\}=\Omega^2_-(\T^4)$. It remains to note that locally the families $\{\omega_I,\omega_J,\omega_K\}$ and $\{d_{12}-d_{34},d_{14}-d_{23},d_{13}+d_{24}\}$ coincide.
\end{proof}
\noindent
We now have all the necessary tools to state and show the following theorem:
\begin{theorem}\label{thm38}
Using the above notations:
$$
\underline{\mu}^{-1}(\{0\})=\text{Symp}(\T^4\,,\,\omega).
$$
\end{theorem}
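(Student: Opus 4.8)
\emph{Plan.} The statement is a set equality $\underline{\mu}^{-1}(\{0\})=\SympB$, and I would prove the two inclusions separately: $\SympB\subseteq\underline{\mu}^{-1}(\{0\})$ by a short formal computation, and the converse — the substantive half — by a Hodge‑theoretic argument after a change of variable. For the easy inclusion, let $f\in\SympB$, so $f^*\omega=\omega$. Then for each $\bullet\in\{I,J,K\}$
$$
f^*\omega_\bullet\wedge\omega=f^*\omega_\bullet\wedge f^*\omega=f^*(\omega_\bullet\wedge\omega),
$$
so it suffices to see that $\omega_\bullet\wedge\omega=0$. By Lemma \ref{lem37} the forms $\omega_I,\omega_J,\omega_K$ constitute a pointwise basis of $\Omega^2_-(\T^4)$, whereas $\omega$ is a parallel form inducing the opposite orientation (cf. the remark in \S\ref{nota}), hence is self‑dual and lies in the orthogonal eigenspace $\Omega^2_+(\T^4)$; since $\Omega^2_+$ and $\Omega^2_-$ are $g$‑orthogonal, $\omega_\bullet\wedge\omega=\omega_\bullet\wedge{*}\omega=g(\omega_\bullet,\omega)\,\mathrm{vol}=0$. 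Therefore $\mu_\bullet(f)=-\tfrac{1}{\mathrm{vol}}f^*(0)=0$ for every $\bullet$, i.e. $\underline{\mu}(f)=0$. (If one adopts the dual orientation convention, so that the $\omega_\bullet$ are the self‑dual forms, simply exchange $\Omega^2_+$ and $\Omega^2_-$ throughout; the logic is unchanged.)

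\emph{Converse inclusion.} Suppose $\underline{\mu}(f)=0$, i.e. $f^*\omega_\bullet\wedge\omega=0$ for $\bullet=I,J,K$. The key move is to transport this into a condition on the single $2$‑form $(f^{-1})^*\omega$. Since $f$ is a diffeomorphism, $(f^{-1})^*$ is an isomorphism of the exterior algebra and $(f^{-1})^*\circ f^*=\mathrm{id}$; applying $(f^{-1})^*$ to the vanishing $4$‑forms $f^*\omega_\bullet\wedge\omega$ gives
$$
0=(f^{-1})^*\bigl(f^*\omega_\bullet\wedge\omega\bigr)=\omega_\bullet\wedge(f^{-1})^*\omega\qquad(\bullet=I,J,K).
$$
Because $\omega_I,\omega_J,\omega_K$ span $\Omega^2_-(\T^4)$ pointwise (Lemma \ref{lem37}), a $2$‑form $\alpha$ with $\alpha\wedge\omega_\bullet=0$ for all $\bullet$ must have vanishing anti‑self‑dual part: writing $\alpha=\alpha_++\alpha_-$ and using $\Omega^2_+\perp\Omega^2_-$ one computes $\alpha\wedge\beta=-g(\alpha_-,\beta)\,\mathrm{vol}$ for every $\beta\in\Omega^2_-$, which forces $\alpha_-=0$. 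Hence $\beta:=(f^{-1})^*\omega$ is pointwise self‑dual. It is also closed, being the pullback of the closed form $\omega$; a closed self‑dual $2$‑form on a compact oriented Riemannian $4$‑manifold is automatically coclosed (${*}d{*}\beta={*}d\beta=0$), hence harmonic. Finally $f^{-1}\sim\mathrm{Id}$ because $f\sim\mathrm{Id}$, so $(f^{-1})^*$ acts as the identity on $H^2(\T^4)$ and $[\beta]=[\omega]$; as $\omega$ is itself harmonic (being parallel on the flat torus), uniqueness of the harmonic representative in a de Rham class yields $\beta=\omega$. Thus $(f^{-1})^*\omega=\omega$, so $f^{-1}$ — and therefore $f$ — lies in $\SympB$. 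This gives $\underline{\mu}^{-1}(\{0\})\subseteq\SympB$ and completes the proof.

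\emph{Main obstacle.} The argument is short once the right viewpoint is in place; the only genuine insight is the reformulation in the converse step, namely that the defining system $f^*\omega_\bullet\wedge\omega=0$ is, after pushing forward by $f$, equivalent to the single geometric statement that $(f^{-1})^*\omega$ be pointwise self‑dual. Everything afterwards is standard four‑dimensional Hodge theory (closed $+$ self‑dual $\Rightarrow$ harmonic; a harmonic form is the unique harmonic representative of its de Rham class). The one bit of bookkeeping to keep straight is which of $\omega$ and the $\omega_\bullet$ is self‑dual, which is pinned down by Lemma \ref{lem37} together with the orientation remark in \S\ref{nota} and does not affect the structure of the proof.
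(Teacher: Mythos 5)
Your proof is correct and follows essentially the same route as the paper: both reduce $\underline{\mu}(f)=0$, via pushing forward by $f$ and Lemma \ref{lem37}, to the vanishing of one of the $\pm$-components of $(f^{-1})^*\omega$, and then use $f\sim\mathrm{Id}$ together with four-dimensional Hodge theory to conclude $(f^{-1})^*\omega=\omega$. The only (cosmetic) difference is in the last step, where the paper shows directly by Stokes that an exact (anti-)self-dual $2$-form vanishes, while you invoke the equivalent fact that a closed self-dual form is harmonic and appeal to uniqueness of the harmonic representative; your explicit handling of the orientation/eigenspace convention is a welcome addition, since the paper's labelling of $\Omega^2_\pm$ is not entirely consistent.
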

\begin{proof}
We have:
\begin{align*}
\underline{\mu}(f)=0\;&\iff\;\forall\;\bullet\in\{I,J,K\},\;\mu_\bullet(f)=0\;\iff\;\forall\;\bullet\in\{I,J,K\},\;f^*\omega_\bullet\wedge\omega=0\\
&\iff\;\forall\;\bullet\in\{I,J,K\},\;\omega_\bullet\wedge(f^{-1})^*\omega=0\;\iff\;((f^{-1})^*\omega)^-=0,
\end{align*}
\noindent
where we used \ref{lem37} for the last equivalence. Now, we have also assumed that $f$ is homotopic to the identity. Therefore, there exists a 1-form $\alpha\in\Omega^1(\T^4)$ such that $f^*\omega-\omega=d\alpha$. Since the LHS form is self-dual, its remains to show that any exact 2-form that is self-dual is zero. Let $d\alpha\in\Omega^2_+(\T^4)$. We have: $* d\alpha=d\alpha$. By definition of the operator $*$ and using Stokes theorem, we get: 
$$
0=\int_{\T^4} d(\alpha\wedge d\alpha)=\int_{\T^4} d\alpha\wedge d\alpha=\int_{\T^4}d\alpha\wedge* d\alpha=\int_{\T^4}g(d\alpha\,,\,d\alpha)vol=G(d\alpha,d\alpha).
$$
\noindent
Which implies that $d\alpha=0$ since $G(.,.)$ is a norm.
\end{proof}
\noindent
For the rest of this section, we will study the application
$$
\Tilde{\mu}(f)=\left((f^{-1})^*\omega\right)^-
$$
\noindent
instead of $\underline{\mu}$ because its differential is simpler and its zero locus is the same (cf \ref{lem37}): $\Tilde{\mu}^{-1}(\{0\})=\SympB$.
\begin{proposition}\label{prop39} $\underline{\mu}$ is $\SympB$-equivariant and $\Tilde{\mu}$ is $\SympB$-invariant.
\end{proposition}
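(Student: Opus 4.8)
The plan is to reproduce, almost verbatim, the pullback manipulations behind Theorem~\ref{thm24} and Proposition~\ref{prop26} in the $\T^2$ setting. The only genuinely new feature is that an element $\varphi\in\SympB$ preserves $\omega$ — hence the volume form $\text{vol}=-\tfrac12\,\omega\wedge\omega$, since $\varphi^*\text{vol}=-\tfrac12(\varphi^*\omega)\wedge(\varphi^*\omega)=\text{vol}$ — but in general does \emph{not} preserve the individual Kähler forms $\omega_\bullet$, so one small manoeuvre is needed to keep everything under a single pullback.

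First I would fix $\varphi\in\SympB$ and $\bullet\in\{I,J,K\}$, and compute directly from the definition of $\mu_\bullet$: writing $(f\circ\varphi)^*\omega_\bullet=\varphi^*f^*\omega_\bullet$ and then rewriting the trailing factor $\omega$ as $\varphi^*\omega$, the numerator $f^*\omega_\bullet\wedge\omega$ becomes $\varphi^*(f^*\omega_\bullet\wedge\omega)$; dividing this top form by $\text{vol}=\varphi^*\text{vol}$ is the same as pulling back the quotient function, so $\mu_\bullet(f\circ\varphi)=\varphi^*\mu_\bullet(f)=\mu_\bullet(f)\circ\varphi$. To read this as equivariance I would then note that, under the identification $Lie(\mathcal{G})^*\cong\mathcal{C}^\infty_0(\T^4)$ furnished by the pairing $\LD{\cdot}{\cdot}$, the coadjoint action of $\varphi$ is precisely $h\mapsto h\circ\varphi$ — one more change of variables using $\varphi^*\text{vol}=\text{vol}$ — so that the three identities together say exactly that $\underline{\mu}=(\mu_I,\mu_J,\mu_K)$ is $\SympB$-equivariant.

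For $\Tilde{\mu}(f)=((f^{-1})^*\omega)^-$ the argument is shorter still: from $\varphi^*\omega=\omega$ one gets $(\varphi^{-1})^*\omega=\omega$, hence $((f\circ\varphi)^{-1})^*\omega=(f^{-1})^*(\varphi^{-1})^*\omega=(f^{-1})^*\omega$, and applying the anti-self-dual projection to both sides gives $\Tilde{\mu}(f\circ\varphi)=\Tilde{\mu}(f)$. I do not expect any real obstacle; the entire statement is a pullback chase. The one point that genuinely needs care is the asymmetry already flagged — $\varphi$ fixes $\omega$ and $\text{vol}$ but neither the metric $g$ nor the $\omega_\bullet$ — which is exactly what forces the ``absorb $\omega$ into $\varphi^*$'' step for $\underline{\mu}$, and which also warns that one must \emph{not} try to commute the projection $(\cdot)^-$ with $\varphi^*$ when handling $\Tilde{\mu}$: there is no need to, since only $\omega$ (not the $\omega_\bullet$) appears there and the $2$-form inside the projection is literally unchanged.
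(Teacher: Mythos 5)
Your proposal is correct and follows essentially the same pullback computation as the paper: absorb the trailing $\omega$ and the volume form into $\varphi^*$ to get $\mu_\bullet(f\circ\varphi)=\mu_\bullet(f)\circ\varphi$, and use $(\varphi^{-1})^*\omega=\omega$ for $\Tilde{\mu}$. Your extra remarks — identifying $h\mapsto h\circ\varphi$ with the coadjoint action, and noting that $\varphi$ need not preserve $g$ or the $\omega_\bullet$ individually — are accurate refinements that the paper leaves implicit.
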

\begin{proof}
Let $\bullet\in\{I,J,K\}$, $f\in\mathcal{M}$, and $\varphi\in\SympB$. We have:
\begin{align*}
&\mu_\bullet(f\circ\varphi)=-\frac{(f\circ\varphi)^*\omega_{I}\wedge\omega}{\text{vol}}=-\frac{\varphi^*f^*\omega_{I}\wedge\varphi^*\omega}{\varphi^*\text{vol}}=\varphi^*\left(-\frac{f^*\omega_{I}\wedge\omega}{\text{vol}}\right)=\mu_\bullet(f)\circ\varphi,\\
&\text{ and:}\\
&\Tilde{\mu}(f\circ\varphi)=\left(((f\circ\varphi)^{-1})^*\omega\right)^-=\left((f^{-1})^*(\varphi^{-1})^*\omega\right)^-=\left((f^{-1})^*\omega\right)^-=\Tilde{\mu}(f).
\end{align*}
\end{proof}

\subsection{DeTurck Trick}
In this subsection, we shall generalize the arguments written in the case of $\T^2$. Recall the two scalar products on $T_f\mathcal{M}$:
\begin{align*}
&G(f_*X\,,\,f_*Y)=\int_{\T^4}(f^*g)(X\,,\,Y)\text{vol}\\
&\Omega_\bullet(f_*X\,,\,f_*Y)=\int_{\T^4}(f^*\omega_\bullet)(X\,,\,Y)\text{vol}.
\end{align*}.
\begin{definition}\label{def310}
Consider the following norm map:
$$
\phi(f)=\frac{1}{2}\|\Tilde{\mu}(f)\|^2=\frac{1}{2}G(\Tilde{\mu}(f),\Tilde{\mu}(f)),
$$
\noindent
and its associated gradient flow that we wish to study:
\begin{equation}\label{eqFGT4}
\partial_t\Tilde{f_t}:=\frac{\partial}{\partial t}\Tilde{f_t}=-\nabla\phi(\Tilde{f_t}).
\end{equation}
\end{definition}
\noindent
We exhibit some properties of this flow:
\begin{proposition}\label{prop311}
\noindent
\begin{enumerate}
    \item $\nabla\phi(f)=-f_*H_f^2i\sharp_gf^*d^*((f^{-1})^*\omega)^-=f_*H_f^2\sharp_\omega f^*d^*((f^{-1})^*\omega)^-$ ;
    \item $\text{Crit}(\phi)=\SympB=\phi^{-1}(0)$ ;
    \item The highest order term of $D_f\nabla\phi(f_*Y)$ is $-f_*H_f^2i\sharp_gf^*d^*d^-\iota_Y(f^{-1})^*\omega$ and the operator is thus not elliptic ;
    \item $\nabla\phi$ is $\SympB$-equivariant.
\end{enumerate}
\end{proposition}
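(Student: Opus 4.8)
The plan is to reproduce, step by step, the proof of Proposition~\ref{prop26} from the two-dimensional case, inserting the extra ingredients forced by the self-dual/anti-self-dual splitting of $\Omega^2(\T^4)$. For (1) I would differentiate $\phi(f)=\frac{1}{2}G(\Tilde{\mu}(f),\Tilde{\mu}(f))$, getting $D_f\phi(f_*Y)=G\bigl(D_f\Tilde{\mu}(f_*Y),\Tilde{\mu}(f)\bigr)$, where $D_f\Tilde{\mu}(f_*Y)=d^-\iota_Y(f^{-1})^*\omega$ (the anti-self-dual part of $d\iota_Y(f^{-1})^*\omega$) is obtained from $d\omega=0$ and Cartan's formula exactly as in the proof of Theorem~\ref{thm36}. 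Since $\Tilde{\mu}(f)$ is anti-self-dual, pairing it against $d^-\iota_Y(f^{-1})^*\omega$ agrees with pairing against the full $d\iota_Y(f^{-1})^*\omega$ (the self-dual part drops by $g$-orthogonality of the splitting), so the $L^2$-adjunction of $d$ and $d^*$ gives $D_f\phi(f_*Y)=G\bigl(\iota_Y(f^{-1})^*\omega,\,d^*((f^{-1})^*\omega)^-\bigr)$. The remaining task is to rewrite this as $G(f_*Y,\cdot)$ for the metric on $T_f\mathcal{M}$, and this is where $H_f^2$ and the complex structure appear: one uses the pointwise identity $f^*\bigl((f^{-1})^*\omega\bigr)=\omega$ (the analogue of $f^*\sigma=H_f\sigma$ on the surface), the change of variables $\int_{\T^4}\beta=\int_{\T^4}f^*\beta$ on $4$-forms (valid as $f$ is orientation preserving), the Jacobian relation $f^*\text{vol}=H_f\,\text{vol}$ with $H_f>0$, the Kähler identity $\sharp_\omega=-i\sharp_g$, and the invariance of $G$ under the complex structure; tracking the Jacobian factors generated when $f^*$ meets the Hodge star and the volume form produces exactly $\nabla\phi(f)=-f_*H_f^2\,i\sharp_g f^*d^*((f^{-1})^*\omega)^-$, which equals $f_*H_f^2\sharp_\omega f^*d^*((f^{-1})^*\omega)^-$ by that Kähler identity.

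For (2): by (1), $\nabla\phi(f)=0$ if and only if $d^*((f^{-1})^*\omega)^-=0$, since $f_*$, $H_f^2$, $i\sharp_g$, $f^*$ are pointwise invertible. The form $\Tilde{\mu}(f)=((f^{-1})^*\omega)^-$ is anti-self-dual, and on a closed oriented $4$-manifold an anti-self-dual $2$-form $\beta$ with $d^*\beta=0$ is automatically closed (on $2$-forms $d^*=-{*}d{*}$, so $*\beta=-\beta$ gives $d^*\beta={*}d\beta$); hence $\Tilde{\mu}(f)$ is harmonic. Writing $(f^{-1})^*\omega=\omega+d\xi$ with $\omega$ the harmonic representative of the common cohomology class (legitimate since $f^{-1}\sim\operatorname{Id}$) and using that $\omega$ is self-dual (cf.\ the proof of Theorem~\ref{thm38}), we get $\Tilde{\mu}(f)=(d\xi)^-$. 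As $(d\xi)^-$ is closed, so is $(d\xi)^+$, and a closed self-dual $2$-form is automatically coclosed, hence harmonic; so $d\xi=(d\xi)^++(d\xi)^-$ is harmonic and exact, which forces $d\xi=0$, i.e.\ $(f^{-1})^*\omega=\omega$ and $f\in\SympB$. This yields $\text{Crit}(\phi)\subseteq\SympB$; conversely, if $f\in\SympB$ then $\Tilde{\mu}(f)=0$, so $\phi(f)=0$ and $\nabla\phi(f)=0$. Hence $\text{Crit}(\phi)=\phi^{-1}(0)=\SympB$ (the identity $\phi^{-1}(0)=\SympB$ is also immediate from Theorem~\ref{thm38}).

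For (3) I would differentiate the formula of (1) by the Leibniz rule: the contributions from $\partial_s(f_s)_*$, from $\partial_s H_{f_s}^2$ (with $\partial_s H_{f_s}=\frac{d\iota_Y f^*\text{vol}}{\text{vol}}$), and from $\partial_s f_s^*$ are all of order $\le 1$ in $Y$, whereas $\partial_s\bigl(d^*((f_s^{-1})^*\omega)^-\bigr)=d^*d^-\iota_Y(f^{-1})^*\omega$ is of order $2$; so the principal part of $D_f\nabla\phi(f_*Y)$ is $-f_*H_f^2\,i\sharp_g f^*d^*d^-\iota_Y(f^{-1})^*\omega$. This operator is not elliptic: at any $\xi\ne 0$ the symbol of $d^-$ maps the $4$-dimensional fibre of $1$-forms onto $\Omega^2_-(\T^4)$, which has dimension $3$, so it already has a nontrivial kernel, and precomposing with the bundle isomorphism $Y\mapsto\iota_Y(f^{-1})^*\omega$ and postcomposing with $d^*$ and the invertible $-f_*H_f^2 i\sharp_g f^*$ cannot remove it. Equivalently, the kernel of the operator contains the image under that isomorphism of every closed $1$-form, hence is infinite dimensional, as forced by the gauge invariance. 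Assertion (4) is then verbatim the argument of Proposition~\ref{prop26}(4): $\phi$ is $\SympB$-invariant because $\Tilde{\mu}$ is (Proposition~\ref{prop39}) and $G$ on forms over $\T^4$ is a fixed inner product; differentiating $\phi(f\circ\varphi)=\phi(f)$ along $v\in T_f\mathcal{M}$ and using the $\SympB$-invariance of the metric on the tangent spaces of $\mathcal{M}$ (the computation of Proposition~\ref{prop23}) gives $G(\nabla\phi(f\circ\varphi),v\circ\varphi)=G(\nabla\phi(f)\circ\varphi,v\circ\varphi)$ for every $v$, and since $v\mapsto v\circ\varphi$ is onto $T_{f\circ\varphi}\mathcal{M}$ we conclude $\nabla\phi(f\circ\varphi)=\nabla\phi(f)\circ\varphi$.

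The main obstacle I anticipate lies entirely in step (1): keeping the bookkeeping honest so that the interaction of $f^*$ with the Hodge star and with $\text{vol}$ yields precisely $H_f^2$ and no other Jacobian factor, and fixing the complex-structure conventions so that the two displayed formulas for $\nabla\phi(f)$ genuinely coincide. The self-dual/anti-self-dual linear algebra entering (2) and (3) is elementary, and (4) is a pure transcription of the surface case.
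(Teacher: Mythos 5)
Your plan follows the paper's proof almost step for step: differentiate $\phi$ to get $G(D_f\Tilde{\mu}(f_*Y),\Tilde{\mu}(f))$, drop the anti-self-dual projection by $g$-orthogonality, integrate by parts, and then chase Jacobian factors and musical isomorphisms through a change of variables by $f$ to produce $H_f^2$ and the $\sharp_\omega=-i\sharp_g$ identification; parts (3) and (4) are likewise the paper's arguments (the paper also concludes non-ellipticity from the infinite-dimensional kernel of closed $1$-forms; your pointwise symbol count for $d^-$ is a clean supplement). The one concrete slip is in $D_f\Tilde{\mu}$: since $\Tilde{\mu}$ involves $(f_s^{-1})^*\omega$ rather than $f_s^*\omega$, the variation is \emph{not} "exactly as in the proof of Theorem \ref{thm36}". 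Writing $f_s^{-1}=(f^{-1}\circ f_s)^{-1}\circ f^{-1}$ and using that the inverse of an isotopy with velocity $Y$ has velocity $-Y$, one gets $D_f\Tilde{\mu}(f_*Y)=-d^-\big((f^{-1})^*\iota_Y\omega\big)$: there is a minus sign you omitted, and the contraction sits inside the pullback (equivalently, $Y$ must be pushed forward by $f^{-1}$), which is precisely what generates the $f^*$ sitting in front of $d^*$ in the stated gradient. Carried through literally, your version would flip the sign of $\nabla\phi(f)$ and misplace the pullback; since you only assert the final bookkeeping rather than perform it, this is the step to repair.

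For part (2) you take a genuinely (if mildly) different route. The paper shows $(d\alpha)^-=0$ by the explicit identity $\|(d\alpha)^-\|^2=-\int_{\T^4}\alpha\wedge d(d\alpha)^-$ and then kills $(d\alpha)^+$ via $0=\int_{\T^4}d\alpha\wedge d\alpha=\|d^+\alpha\|^2-\|d^-\alpha\|^2$; you instead observe that a coclosed anti-self-dual form is closed, hence $(d\xi)^-$ and then $(d\xi)^+$ are harmonic, so $d\xi$ is exact and harmonic, hence zero. Both are correct; yours is shorter but leans on the Hodge decomposition, while the paper's stays entirely at the level of Stokes-type integral identities, which is more in keeping with its elementary style.
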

\begin{proof}
\noindent
\begin{enumerate}
\item We first compute $D_f\Tilde{\mu}$:
\begin{align*}
D_f\Tilde{\mu}(f_*Y)&=\partial_s\left((f_s^{-1})^*\omega\right)^-=\left(\partial_s((f_s^{-1})^*\omega)\right)^-\\&=\left(\partial_s((f_s^{-1}\circ f\circ f^{-1})^*\omega)\right)^-=\left(\partial_s((f^{-1})^*(f_s^{-1}\circ f)^*\omega)\right)^-\\&=\left((f^{-1})^*\partial_s(f_s^{-1}\circ f)^*\omega\right)^-=-\left((f^{-1})^*\mathcal{L}_Y\omega\right)^-\\&=-\left((f^{-1})^*(d\iota_Y+\iota_Yd)\omega\right)^-=-d^-(f^{-1})^*\iota_Y\omega,
\end{align*}
\noindent
where we used the definition of the Lie derivative, the fact that if $\partial_s g_s=Y$, then $\partial_s (g_s)^{-1}=-Y$ for all isotopy $\{g_s\}_s$, and Cartan formulae. Then, we compute $D_f\phi$:
\begin{align*}
D_f\phi(f_*Y)&\overset{1}{=}G(D_f\Tilde{\mu}(f_*Y),\Tilde{\mu}(f))=-G(d^-(f^{-1})^*\iota_Y\omega,\Tilde{\mu}(f))\\
&\overset{2}{=}-G(d(f^{-1})^*\iota_Y\omega,\Tilde{\mu}(f))=-G((f^{-1})^*\iota_Y\omega,d^*\Tilde{\mu}(f))\\
&\overset{3}{=}-\int_{\T^4}g((f^{-1})^*\iota_Y\omega,(f^{-1})^*f^*d^*\Tilde{\mu}(f))(f^{-1})^*f^*\text{vol}\\
&\overset{4}{=}-\int_{\T^4}(f^{-1})^*\left(g(\iota_Y\omega,f^*d^*\Tilde{\mu}(f))f^*\text{vol}\right)\\
&\overset{5}{=}-\int_{\T^4}g(\iota_Y\omega,f^*d^*\Tilde{\mu}(f))H_f\text{vol}=-\int_{\T^4}g(\iota_{(YJ)}g,H_ff^*d^*\Tilde{\mu}(f))\text{vol}\\
&\overset{6}{=}-\int_{\T^4}g(YJ,\sharp_gH_ff^*d^*\Tilde{\mu}(f))\text{vol}=\int_{\T^4}g(Y,\left(\sharp_gH_ff^*d^*\Tilde{\mu}(f)\right)J)\text{vol}\\
&\overset{7}{=}\int_{\T^4}f^*\left(g(Y,-\sharp_\omega H_ff^*d^*\Tilde{\mu}(f))\text{vol}\right)=\int_{\T^4}f^*g(Y,-\sharp_\omega H_ff^*d^*\Tilde{\mu}(f))f^*\text{vol}\\
&\overset{8}{=}\int_{\T^4}f^*g(Y,-H_f^2\sharp_\omega f^*d^*\Tilde{\mu}(f))\text{vol},
\end{align*}
\noindent
where we used: at the 2nd line that the two terms are anti-self-dual ; at the 5th line the fact that $f^{-1}$ is an orientation preserving diffeomorphism, the notation $H_f=\frac{f^*\text{vol}}{\text{vol}}$\label{notHF} and the definition of $\omega$ ; at the 6th line the fact that the operators $\flat_g$ and $\sharp_g$ are dual for the metric and the $J$-invariance of $G(\cdot,\cdot)$ ; and finally at the 7th line that $f$ is also an orientation preserving diffeomorphism and the formulae $(\sharp_gX)J=-\sharp_\omega X$. Thus: $\nabla\phi(f)=-f_*(H_f)^2\sharp_\omega f^*d^*((f^{-1})^*\omega)^-$.
\item Let $f\in\text{Crit}(\phi)$. We have $\nabla\phi(f)=0$, which implies that $d^*((f^{-1})^*\omega)^-=0$ since all operators in front of this expression are invertible. On an other hand, since $f\sim Id$, we have: $(f^{-1})^*\omega=\omega+d\alpha$ for a certain 1-form $\alpha$. The condition obtained above then becomes $d(d\alpha)^-=0$. Let us show that this implies that $d\alpha=0$. To do so, we compute:
\begin{align*}
G((d\alpha)^-,(d\alpha)^-)&=\int_{\T^4}(d\alpha)^-\wedge *(d\alpha)^-\text{vol}=-\int_{\T^4}(d\alpha)^-\wedge (d\alpha)^-\\
&=-\int_{\T^4}(d\alpha)\wedge (d\alpha)^-=-\int_{\T^4}\left(d(\alpha\wedge (d\alpha)^-)+\alpha\wedge d(d\alpha)^-\right)\\
&=-\int_{\T^4}d(\alpha\wedge (d\alpha)^-)-\int_{\T^4}\alpha\wedge d(d\alpha)^-=-\int_{\T^4}\alpha\wedge d(d\alpha)^-,
\end{align*}
\noindent
where we used the identity $d(\alpha\wedge (d\alpha)^-)=d\alpha\wedge(d\alpha)^--\alpha\wedge d(d\alpha)^-$ at the 2nd line, and Stokes theorem at the last line. Therefore:
$$
d(d\alpha)^-=0\implies\|(d\alpha)^-\|^2=0\implies (d\alpha)^-=0.
$$
\noindent
It remains to show that $(d\alpha)^-=0\implies d^+\alpha=0$, which will give $(f^{-1})^*\omega=\omega$. But this is clear given the following computation:
\begin{align*}
0=&\int_{\T^4}d(\alpha\wedge d\alpha)=\int_{\T^4}d\alpha\wedge d\alpha\\
&=\int_{\T^4}(d^+\alpha+d^-\alpha)\wedge (d^+\alpha+d^-\alpha)=\int_{\T^4}d^+\alpha\wedge d^+\alpha+\int_{\T^4}d^-\alpha\wedge d^-\alpha\\
&=\int_{\T^4}d^+\alpha\wedge *d^+\alpha-\int_{\T^4}d^-\alpha\wedge *d^-\alpha=G(d^+\alpha,d^+\alpha)-G(d^-\alpha,d^-\alpha),
\end{align*}
\noindent
where we used that the decomposition $d\alpha=d^+\alpha+d^-\alpha$ is $g$-orthogonal, and the identities $d^+\alpha=*d^+\alpha$ and $d^-\alpha=-*d^-\alpha$. On the other hand, we clearly have: $f\in\SympB\implies f\in\operatorname{Crit}(\phi)$ (cf the proof of \ref{prop26}).
\item We notice that the highest order term of the expression:
$$
-\partial_s\left((f_s)_*H_{f_s}^2\sharp_\omega f_s^*d^*((f_s^{-1})^*\omega)^-\right)
$$
\noindent
appears when, after applying successively the Leibniz rule, the differential reaches the term $(f_s^{-1})^*\omega$. This term is of order 2 (because of the $d^*$). The other terms are of order at most 1, hence the result. The operator $D_f\nabla\phi$ isn't elliptic since its kernel contains a space isomorphic to that of closed 1-forms (infinite dimensional).
\item The proof works exactely as in the case of $\T^2$, since $\phi$ is $\SympB$-invariant (cf \ref{prop26}).
\end{enumerate}
\end{proof}
\noindent
We can now mimic the arguments given in the case of $\T^2$ to obtain a parabolic flow from \ref{eqFGT4}: to any diffeomorphism of $\T^4$, we can associate a lift $F:\R^4\rightarrow\R^4$ unique modulo the action by translation of the lattice. Then, since the tangent bundle of the torus is trivial, we can see $F$ as a vector field.
\begin{definition}\label{def312}
We define the operator:
$$
W(f):=-\sharp_{\omega}f^*dd^*\iota_{F^{-1}}\omega,
$$
\noindent
for all $f\in\mathcal{M}$. And we study its associated modified gradient flow:
\begin{equation}\label{FGmT4}
\frac{\partial}{\partial t}f_t=-\nabla\phi(f_t)+(f_t)_*W(f_t)
\end{equation}
\end{definition}
\begin{proposition}\label{prop313}
Let $f\in\mathcal{M}$. Then, $W(f)$ is a Hamiltonian vector field. Moreover, for all $f_*Y\in T_f\mathcal{M}$, we have:
$$
D_fW(f_*Y)=-\sharp_{\omega}\left(\mathcal{L}_Yf^*dd^*\iota_{F^{-1}}\omega+f^*dd^*\iota_{Y}(f^{-1})^*\omega\right).
$$
\end{proposition}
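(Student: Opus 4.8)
The plan is to follow the template of Proposition~\ref{prop213}, establishing the two assertions in turn.

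\emph{That $W(f)$ is Hamiltonian.} First I would apply $\flat_\omega=(\sharp_\omega)^{-1}$ to the defining formula for $W(f)$, obtaining
$$
\iota_{W(f)}\omega=\flat_\omega\!\left(-\sharp_\omega f^*dd^*\iota_{F^{-1}}\omega\right)=-f^*dd^*\iota_{F^{-1}}\omega=-d\!\left(f^*d^*\iota_{F^{-1}}\omega\right),
$$
the last step using $dd^*\iota_{F^{-1}}\omega=d\big(d^*\iota_{F^{-1}}\omega\big)$ and the commutation of $f^*$ with $d$. Thus $\iota_{W(f)}\omega$ is exact and $W(f)$ is the Hamiltonian vector field of $f^*d^*\iota_{F^{-1}}\omega$. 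I would flag one point here: the lift $F^{-1}$ of $f^{-1}$ is well defined only up to translation by $\Lambda$ and is not $\Lambda$-periodic as a vector field on $\R^4$, but $d^*$ (like $d$) only involves the derivative $dF^{-1}$, which \emph{is} $\Lambda$-periodic, and the additive ambiguity drops out; hence $dd^*\iota_{F^{-1}}\omega$ descends to a genuine $1$-form on $\T^4$ and $W(f)$ is a well-defined (Hamiltonian) vector field.

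\emph{The linearisation.} Next I would take a curve $\{f_s\}\subset\mathcal{M}$ with $f_0=f$ and $\left.\partial_s\right|_{s=0}f_s=f_*Y$ — so that $Y$ is the velocity at $s=0$ of the isotopy $\{f^{-1}\circ f_s\}$ — and write $F_s^{-1}$ for the associated lift of $f_s^{-1}$. Since $\sharp_\omega$, $d$ and $d^*$ are independent of $s$,
$$
D_fW(f_*Y)=-\sharp_\omega\left.\partial_s\right|_{s=0}\!\left(f_s^*\,dd^*\iota_{F_s^{-1}}\omega\right),
$$
and the Leibniz rule splits the right-hand side in two. Differentiating the pullback with the form frozen contributes $-\sharp_\omega\,\mathcal{L}_Y\!\left(f^*dd^*\iota_{F^{-1}}\omega\right)$, because $\left.\partial_s\right|_{s=0}f_s^*\beta=\mathcal{L}_Y(f^*\beta)$ for a fixed form $\beta$ (the same identity used in the proof of Theorem~\ref{thm36}). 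Differentiating $\iota_{F_s^{-1}}\omega$ with the pullback frozen contributes $-\sharp_\omega f^*dd^*\!\left(\left.\partial_s\right|_{s=0}\iota_{F_s^{-1}}\omega\right)$; since $f^*$ is injective on forms, the proposition therefore reduces to the identity
$$
dd^*\!\left(\left.\partial_s\right|_{s=0}\iota_{F_s^{-1}}\omega\right)=dd^*\iota_Y(f^{-1})^*\omega\qquad\text{on }\T^4 .
$$

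\emph{The main obstacle.} Everything above is a formal consequence of the Leibniz rule; the only genuinely delicate point is this last identity, which is the $\T^4$ counterpart of $\left.\partial_s\right|_{s=0}\iota_{F_s}\sigma=\iota_Yf^*\sigma$ used in Proposition~\ref{prop213}(2). I would establish it in the same manner as the linearisation of $\Tilde{\mu}$ in Proposition~\ref{prop311}: from $f_s^{-1}=(f^{-1}\circ f_s)^{-1}\circ f^{-1}$ one gets $\left.\partial_s\right|_{s=0}(f^{-1}\circ f_s)^{-1}=-Y$, which expresses $\left.\partial_s\right|_{s=0}F_s^{-1}$ through $Y$ and the lift of $f^{-1}$; contracting with the flat form $\omega$ and comparing with $\iota_Y(f^{-1})^*\omega$, the two $1$-forms are seen to differ only by constant-coefficient (parallel, hence $dd^*$-annihilated) terms, so they coincide after $dd^*$. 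The real effort is purely the lift/periodicity bookkeeping — tracking which tangent space each vector-field-along-a-map lives in under the flat trivialisation $T\T^4=\T^4\times\mathbb{H}$, and checking that the $\Lambda$-ambiguity of the lift is killed by $dd^*$ — and no new analytic ingredient is required beyond what was already used in the $\T^2$ case.
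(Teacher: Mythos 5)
Your argument is essentially the paper's own proof: you contract $W(f)$ with $\omega$ to exhibit the Hamiltonian function $f^*d^*\iota_{F^{-1}}\omega$, and then apply the Leibniz rule to $-\sharp_\omega f_s^*dd^*\iota_{F_s^{-1}}\omega$ to obtain exactly the two stated terms, so the approach matches. The only place you go beyond the paper is in scrutinising the lift ambiguity and the identity $\left.\partial_s\right|_{s=0}\iota_{F_s^{-1}}\omega=\iota_Y(f^{-1})^*\omega$, which the paper simply asserts; note, however, that your claim that the discrepancy between the raw variation $-\iota_{Y\circ f^{-1}}\omega$ and $\iota_Y(f^{-1})^*\omega$ consists only of constant-coefficient (hence $dd^*$-annihilated) terms is not obviously true as stated, since the difference involves $Df^{-1}$ in the contracted slots and is not parallel for a general $f$ --- though this is a gap in the paper's own three-line computation at least as much as in your sketch.
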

\begin{proof}
We do have:
$$
\iota_{W(f)}\omega=-f^*dd^*\iota_{F^{-1}}\omega=-df^*d^*\iota_{F^{-1}}\omega,
$$
\noindent
which implies that $W(f)$ is the Hamiltonian vector field associated to the Hamiltonian function $f^*d^*\iota_{F^{-1}}\omega$. The computation of the variation is nothing but a direct application of the Leibniz rule:
\begin{align*}
D_fW(f_*Y)&=\left.\frac{\partial}{\partial t}\right|_{t=0}W(f_t)=\left.\frac{\partial}{\partial t}\right|_{t=0}-\sharp_{\omega}f_t^*dd^*\iota_{F_t^{-1}}\omega\\
&=-\sharp_{\omega}\left.\frac{\partial}{\partial t}\right|_{t=0}f_t^*dd^*\iota_{F_0^{-1}}\omega-\sharp_{\omega}f_0^*dd^*\left.\frac{\partial}{\partial t}\right|_{t=0}\iota_{F_t^{-1}}\omega\\
&=-\sharp_{\omega}\left(\mathcal{L}_Yf^*dd^*\iota_{F^{-1}}\omega+f^*dd^*\iota_{Y}(f^{-1})^*\omega\right).
\end{align*}
\end{proof}
\begin{remark}
We note that the term $-\sharp_{\omega}\mathcal{L}_Yf^*dd^*\iota_{F^{-1}}\omega$ is of order 1, whereas the term $-\sharp_{\omega}f^*dd^*\iota_{Y}(f^{-1})^*\omega$ has order 2. On the other hand, we see that, just like the $\T^2$ case, the Hamiltonian vector field $W(f)$ may be absorb by acting on the gradient flow equation by a one-parameter family of Hamiltonian symplectomorphisms (cf \ref{prop210} for the computations).
\end{remark}
\begin{proposition}\label{prop315}
The highest order term of the linearisation of the operator
$$
f\mapsto-\nabla\phi(f)+f_*W(f)
$$
\noindent
at a fixed $f\in\mathcal{M}$ is:
$$
-f_*\sharp_{\omega}f^*\left(H_f^2d^*d^-+dd^*\right)\iota_Y(f^{-1})^*\omega.
$$
\end{proposition}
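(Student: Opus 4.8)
The plan is to linearise the operator $f\mapsto-\nabla\phi(f)+f_*W(f)$ term by term by Leibniz's rule, discard everything that is not of differential order two in $Y$, and read off the surviving pieces from Propositions \ref{prop311} and \ref{prop313}. Concretely, I would fix a curve $\{f_s\}\subset\mathcal{M}$ with $f_0=f$ and $\left.\partial_s\right|_{s=0}f_s=f_*Y$, so that
$$
\left.\partial_s\right|_{s=0}\big(-\nabla\phi(f_s)+(f_s)_*W(f_s)\big)=-D_f\nabla\phi(f_*Y)+\left.\partial_s\right|_{s=0}\big((f_s)_*W(f)\big)+f_*D_fW(f_*Y).
$$
The middle term only differentiates the pushforward operation $(f_s)_*$ applied to the frozen vector field $W(f)$, so in the trivialisation $T\T^4=\T^4\times\mathbb{H}$ it involves at most one derivative of $Y$ and is therefore of order $\le1$, contributing nothing at order two.

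For the last term I would use Proposition \ref{prop313}, which gives $D_fW(f_*Y)=-\sharp_\omega\big(\mathcal{L}_Yf^*dd^*\iota_{F^{-1}}\omega+f^*dd^*\iota_Y(f^{-1})^*\omega\big)$: the first summand is a Lie derivative of a fixed form, hence of order one, and only the second is of order two, so the order-two part of $f_*D_fW(f_*Y)$ is $-f_*\sharp_\omega f^*dd^*\iota_Y(f^{-1})^*\omega$. Meanwhile, Proposition \ref{prop311}(3) says that the order-two part of $D_f\nabla\phi(f_*Y)$ is $-f_*H_f^2i\sharp_gf^*d^*d^-\iota_Y(f^{-1})^*\omega$, which by the identity relating $i\sharp_g$ to $\sharp_\omega$ already invoked when passing between the two forms of $\nabla\phi$ in Proposition \ref{prop311}(1) equals $f_*H_f^2\sharp_\omega f^*d^*d^-\iota_Y(f^{-1})^*\omega$; hence $-D_f\nabla\phi(f_*Y)$ contributes $-f_*H_f^2\sharp_\omega f^*d^*d^-\iota_Y(f^{-1})^*\omega$ at order two.

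Summing the two surviving contributions then yields
$$
-f_*H_f^2\sharp_\omega f^*d^*d^-\iota_Y(f^{-1})^*\omega-f_*\sharp_\omega f^*dd^*\iota_Y(f^{-1})^*\omega=-f_*\sharp_\omega f^*\big(H_f^2d^*d^-+dd^*\big)\iota_Y(f^{-1})^*\omega,
$$
the asserted highest order term (up to harmlessly relocating the positive factor $H_f^2$ across $\sharp_\omega$ and $f^*$, which does not affect the principal part). The main obstacle is not the assembly, which is immediate, but the order bookkeeping: one must be sure that the pushforward-variation term and the Lie-derivative term really do drop an order, so that no third order-two contribution is overlooked, and that pre- and post-composition with the invertible operators $f^*$, $\sharp_g$, $\sharp_\omega$ and multiplication by the positive function $H_f$ leave the principal symbol intact. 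Granting this, the statement reduces entirely to Propositions \ref{prop311} and \ref{prop313}; verifying that the resulting symbol $H_f^2d^*d^-+dd^*$ on $2$-forms is positive definite — which makes \eqref{FGmT4} parabolic, exactly as in the $\T^2$ case of Proposition \ref{prop213} — is the subject of the next step, not of this one.
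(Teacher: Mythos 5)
Your proposal is correct and follows exactly the route of the paper, whose own proof of this proposition is simply the one-line remark that it suffices to combine Propositions \ref{prop311} and \ref{prop313}; you carry out that combination explicitly, with the same order bookkeeping (the pushforward-variation and Lie-derivative terms dropping to order $\le 1$, and the sign conversion $-i\sharp_g=\sharp_\omega$) that the paper leaves implicit. Your closing caveat about relocating $H_f^2$ across $\sharp_\omega$ and $f^*$ is a fair observation of a small notational looseness in the paper's statement, and it is indeed harmless for the principal symbol.
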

\begin{proof}
It suffices to take the results of the propositions \ref{prop311} and \ref{prop313}.
\end{proof}
\begin{theorem}\label{thm316}
The order 2 differential operator $L:=H_f^2d^*d^-+dd^*$ acting on $\Omega^1(\T^4)$ is elliptic.
\end{theorem}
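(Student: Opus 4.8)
The plan is to pass to the principal symbol, first simplify $L$ on $1$-forms, and then diagonalise the symbol. Ellipticity of $L$ only sees its principal symbol, so the function $H_f$ — which is strictly positive, being $f^{*}\operatorname{vol}/\operatorname{vol}$ for an orientation-preserving diffeomorphism — enters merely as a positive scalar factor, and this is what will make the computation work out.

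The first step is a purely algebraic simplification of $d^{*}d^{-}$ on $\Omega^{1}(\T^{4})$. For $\alpha\in\Omega^{1}(\T^{4})$ the form $*\,d\alpha$ is again a $2$-form; since in real dimension four one has $**=\operatorname{Id}$ on $\Omega^{2}$ and $d^{*}=-{*}\,d\,{*}$ on $\Omega^{2}$, it follows that $d^{*}(*\,d\alpha)=-{*}\,d\,{*}(*\,d\alpha)=-{*}\,d(d\alpha)=0$. Writing the anti-self-dual projection as $\pi_{-}=\tfrac12(\operatorname{Id}-{*})$, with the Hodge-star conventions fixed around Lemma~\ref{lem37}, so that $d^{-}\alpha=\tfrac12(d\alpha-{*}\,d\alpha)$, this yields $d^{*}d^{-}=\tfrac12\,d^{*}d$ on $\Omega^{1}(\T^{4})$, hence
\[
L=H_f^{2}\,d^{*}d^{-}+dd^{*}=\tfrac12 H_f^{2}\,d^{*}d+dd^{*}\qquad\text{on }\Omega^{1}(\T^{4}).
\]

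The second step is the symbol computation, which I would carry out invariantly. For $\xi\in T^{*}_{p}\T^{4}\setminus\{0\}$ and $\alpha\in T^{*}_{p}\T^{4}$ the standard symbols give $\sigma(d^{*}d)(\xi)\alpha=\iota_{\xi^{\sharp}}(\xi\wedge\alpha)=|\xi|^{2}\alpha-\langle\xi,\alpha\rangle\,\xi$ and $\sigma(dd^{*})(\xi)\alpha=\xi\wedge\iota_{\xi^{\sharp}}\alpha=\langle\xi,\alpha\rangle\,\xi$ (whose sum recovers $\sigma(\Delta)(\xi)=|\xi|^{2}\operatorname{Id}$, a useful sanity check). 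Hence
\[
\sigma(L)(\xi)\alpha=\tfrac12 H_f^{2}|\xi|^{2}\,\alpha+\bigl(1-\tfrac12 H_f^{2}\bigr)\langle\xi,\alpha\rangle\,\xi .
\]
This endomorphism of $T^{*}_{p}\T^{4}$ is self-adjoint, and with respect to the orthogonal splitting $T^{*}_{p}\T^{4}=\R\xi\oplus\xi^{\perp}$ it equals $|\xi|^{2}$ on the line $\R\xi$ and $\tfrac12 H_f^{2}|\xi|^{2}$ on the $3$-dimensional subspace $\xi^{\perp}$. Consequently $\det\sigma(L)(\xi)=\tfrac18 H_f^{6}|\xi|^{8}$, which is nonzero for $\xi\neq0$ because $H_f>0$; equivalently, $\langle\sigma(L)(\xi)\alpha,\alpha\rangle=\tfrac12 H_f^{2}\bigl(|\xi|^{2}|\alpha|^{2}-\langle\xi,\alpha\rangle^{2}\bigr)+\langle\xi,\alpha\rangle^{2}>0$ for $\alpha\neq0$ by Cauchy--Schwarz. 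So $\sigma(L)(\xi)$ is invertible for every nonzero $\xi$, i.e.\ $L$ is elliptic.

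The only point that really requires attention is the sign bookkeeping in real dimension four — keeping the Hodge-star conventions consistent with Lemma~\ref{lem37}, and checking the three facts $**=\operatorname{Id}$ on $\Omega^{2}$, $d^{*}=-{*}\,d\,{*}$ on $\Omega^{2}$, and the resulting $d^{*}{*}\,d=0$ on $\Omega^{1}$ — after which everything reduces to the routine symbol calculus for $d$ and $d^{*}$ together with $H_f>0$. If one prefers to avoid even this, the alternative is to repeat the two-dimensional computation of Proposition~\ref{prop213} line by line: write $\alpha=\sum_{i}\alpha_{i}\,dx_i$ in a local orthonormal coframe, expand $dd^{*}\alpha$ and $d^{*}d^{-}\alpha$, form the $4\times4$ principal-symbol matrix and compute its determinant directly; this is more laborious but entirely mechanical, and it recovers the same answer $\tfrac18 H_f^{6}|\xi|^{8}$.
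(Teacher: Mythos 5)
Your proof is correct, and it arrives at the same principal symbol and the same determinant as the paper, but by a genuinely different and slicker route. The paper proceeds entirely in local coordinates: it expands $d\alpha$, projects onto the anti-self-dual part using the explicit basis of Lemma~\ref{lem37}, applies $d^*$, assembles the $4\times4$ symbol matrix, and evaluates its degree-$8$ determinant by brute force. You instead isolate the operator identity $d^*d^-=\tfrac12\,d^*d$ on $\Omega^1(\T^4)$ --- valid because $d^*({*}\,d\alpha)=\pm\,{*}\,d\,{*}{*}\,d\alpha=\pm\,{*}\,dd\alpha=0$, a conclusion independent of the sign convention chosen for $d^*$ on $\Omega^2$ --- which reduces $L$ to $\tfrac12H_f^{2}\,d^*d+dd^*$ and hence to the standard symbol calculus for the Hodge Laplacian; the orthogonal splitting $T_p^*\T^4=\R\xi\oplus\xi^{\perp}$ then diagonalises $\sigma(L)(\xi)$ with eigenvalues $|\xi|^2$ (on $\R\xi$) and $\tfrac12H_f^{2}|\xi|^2$ (on $\xi^{\perp}$). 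This buys two things: it eliminates the coordinate bookkeeping entirely, and your Cauchy--Schwarz estimate shows that $\sigma(L)(\xi)$ is positive definite rather than merely invertible, which is the form of the statement actually needed for strong parabolicity of the modified flow. Your determinant $\tfrac18H_f^{6}|\xi|^{8}$ agrees with the paper's $\bigl(\tfrac h2\bigr)^3|\xi|^{8}$ once one reads the paper's $h$ as $H_f^{2}$ rather than $H_f$ (the substitution $h:=H_f$ there appears to be a typo, since the operator carries the coefficient $H_f^{2}$).
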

\begin{proof}
Let $\alpha\in\Omega^1(\T^4)$. We choose a local chart and write:
$$
\alpha=\sum_{j=1}^4\alpha_jdx_j,
$$
\noindent
where $\{dx_j\}_{j=1}^4$ is a local orthonormal basis of $T^*\T^4$ (since $T^*\T^4\cong\T^4\times\R^4$ we can pick the standard canonical basis of $\R^4$), and the $\alpha_j$ are smooth functions of $\T^4$. We describe the action of $L$ on $\alpha$ step by step.\\
\noindent
\begin{enumerate}
\item We start by describing $d^*(d\alpha)^-$ is those local coordinates. Recall the basis introduced in the lemma \ref{lem37}:
$$
\Omega^2(\T^4)=\operatorname{Span}\{d_{12}+d_{34},d_{14}+d_{23},d_{13}-d_{24},d_{12}-d_{34},d_{14}-d_{23},d_{13}+d_{24}\},
$$
\noindent
where $d_{ij}:=dx_i\wedge dx_j$. It is an orthogonal basis preserving the decomposition $\Omega=\Omega^+\oplus\Omega^-$. And it allows us to describe the projections onto its self-dual and anti-self-dual parts of any $\beta\in\Omega^2(\T^4)$, written locally $\beta=\sum_{1\leq i<j\leq4}\beta_{ij}d_{ij}$:
\begin{align*}
&\beta^+=\frac{1}{2}\left\{(\beta_{12}+\beta_{34})(d_{12}+d_{34})+(\beta_{14}+\beta_{23})(d_{14}+d_{23})+(\beta_{13}-\beta_{24})(d_{13}-d_{24})\right\}\\
&\text{et}\\
&\beta^-=\frac{1}{2}\left\{(\beta_{12}-\beta_{34})(d_{12}-d_{34})+(\beta_{14}-\beta_{23})(d_{14}-d_{23})+(\beta_{13}+\beta_{24})(d_{13}+d_{24})\right\}.
\end{align*}
\noindent
Then, if $\alpha=\alpha_1d_1+\alpha_2d_2+\alpha_3d_3+\alpha_4d_4$ (where $d_i=dx_i$), we successively get:
\noindent
\begin{enumerate}
\item
\begin{align*}
d\alpha&=\left(\partial_1\alpha_2-\partial_2\alpha_1\right)d_{12}+\left(\partial_1\alpha_3-\partial_3\alpha_1\right)d_{13}\\
&+\left(\partial_1\alpha_4-\partial_4\alpha_1\right)d_{14}+\left(\partial_2\alpha_3-\partial_3\alpha_2\right)d_{23}\\
&+\left(\partial_2\alpha_4-\partial_4\alpha_2\right)d_{24}+\left(\partial_3\alpha_4-\partial_4\alpha_3\right)d_{34}\,;
\end{align*}
\item
\begin{align*}
(d\alpha)^-&=\frac{1}{2}\left(\partial_1\alpha_2-\partial_2\alpha_1+\partial_4\alpha_3-\partial_3\alpha_4\right)(d_{12}-d_{34})\\
&+\frac{1}{2}\left(\partial_1\alpha_4-\partial_4\alpha_1+\partial_3\alpha_2-\partial_2\alpha_3\right)(d_{14}-d_{23})\\
&+\frac{1}{2}\left(\partial_1\alpha_3-\partial_3\alpha_1+\partial_2\alpha_4-\partial_4\alpha_2\right)(d_{13}+d_{24})\,;
\end{align*}
\item
\begin{align*}
d^*(d\alpha)^-&=\frac{1}{2}\left((\partial_2^2+\partial_3^2+\partial_4^2)\alpha_1-\partial_2\partial_1\alpha_2-\partial_3\partial_1\alpha_3-\partial_4\partial_1\alpha_4\right)d_1\\
&+\frac{1}{2}\left((\partial_1^2+\partial_3^2+\partial_4^2)\alpha_2-\partial_1\partial_2\alpha_1-\partial_3\partial_2\alpha_3-\partial_4\partial_2\alpha_4\right)d_2\\
&+\frac{1}{2}\left((\partial_1^2+\partial_2^2+\partial_4^2)\alpha_3-\partial_1\partial_3\alpha_1-\partial_2\partial_3\alpha_2-\partial_4\partial_3\alpha_4\right)d_3\\
&+\frac{1}{2}\left((\partial_1^2+\partial_2^2+\partial_3^2)\alpha_4-\partial_1\partial_4\alpha_1-\partial_2\partial_4\alpha_2-\partial_3\partial_4\alpha_3\right)d_4.
\end{align*}
\item Next, we compute in local coordinates $dd^*\alpha$. Since $d^*\alpha$ is nothing but a divergence, we obtain: 
\begin{align*}
dd^*\alpha&=d(\sum_{i=1}^4\partial_i\alpha_i)=\sum_{j=1}^4\sum_{i=1}^4\partial_j\partial_i\alpha_id_j\\
&=\left(\partial_1^2\alpha_1+\partial_1\partial_2\alpha_2+\partial_1\partial_3\alpha_3+\partial_1\partial_4\alpha_4\right)d_1\\
&+\left(\partial_2\partial_1\alpha_1+\partial_2^2\alpha_2+\partial_2\partial_3\alpha_3+\partial_2\partial_4\alpha_4\right)d_2\\
&+\left(\partial_3\partial_1\alpha_1+\partial_3\partial_2\alpha_2+\partial_3^2\alpha_3+\partial_3\partial_4\alpha_4\right)d_3\\
&+\left(\partial_4\partial_1\alpha_1+\partial_4\partial_2\alpha_2+\partial_4\partial_3\alpha_3+\partial_4^2\alpha_4\right)d_4.
\end{align*}
\end{enumerate}
\item We can now extract the principal symbol $\sigma_L$ of the operator $L$ out of the two previous local computations. Let $\xi=(x,y,z,t)\in T^*_p\T^4\cong\R^4$ and $h:=H_f$. We have:
\begin{small}
\begin{align*}
&\sigma_L(\xi)=\left(\begin{array}{cccc}
x^2+\frac{h}{2}(y^2+z^2+t^2)&(1-\frac{h}{2})xy&(1-\frac{h}{2})xz&(1-\frac{h}{2})xt\\
(1-\frac{h}{2})xy&y^2+\frac{h}{2}(x^2+z^2+t^2)&(1-\frac{h}{2})yz&(1-\frac{h}{2})yt\\
(1-\frac{h}{2})xz&(1-\frac{h}{2})yz&z^2+\frac{h}{2}(x^2+y^2+t^2)&(1-\frac{h}{2})zt\\
(1-\frac{h}{2})xt&(1-\frac{h}{2})yt&(1-\frac{h}{2})zt&t^2+\frac{h}{2}(x^2+y^2+z^2)
\end{array}\right)
\end{align*}
\end{small}
\noindent
And we check that this symbol in invertible for each $\xi\neq0$:
\begin{align*}
\operatorname{det}(\sigma_L(\xi))&=\left(\frac{h}{2}\right)^3(t^8+x^8+4t^2x^6+6t^4x^4+4t^6x^2+y^8+4t^2y^6+4x^2y^6+6t^4y^4+6x^4y^4\\
&+12t^2x^2y^4+4t^6y^2+4x^6y^2+12t^2x^4y^2+12t^4x^2y^2+z^8+4t^2z^6+4x^2z^6\\
&+4y^2z^6+6t^4z^4+6x^4z^4+12t^2x^2z^4+6y^4z^4+12t^2y^2z^4+12x^2y^2z^4\\
&+4t^6z^2+4x^6z^2+12t^2x^4z^2+12t^4x^2z^2+4y^6z^2+12t^2y^4z^2\\
&+12x^2y^4z^2+12t^4y^2z^2+12x^4y^2z^2+24t^2x^2y^2z^2)\\
&=\left(\frac{h}{2}\right)^3(x^2+y^2+z^2+t^2)^4,
\end{align*}
\noindent
since $f\sim\operatorname{Id}$, it follows that $h=H_f\neq0$. Thus, $\operatorname{det}(\sigma_L(\xi))\neq0$ for all $\xi\neq0$. The operator $L=H_f^2d^*d^-+dd^*$ is elliptic.
\end{enumerate}
\end{proof}

\subsection{Theorem for $\T^4$}\label{thmB}
\begin{theorem}
Let $k\in\mathbb{N}$ and $\alpha\in(0,1)$. For all $f\in\mathcal{M}^{k,\alpha}$, there exists $\epsilon(f)>0$ and a unique one-parameter family $\{\Tilde{f_t}\}$ of $\mathcal{M}^{k,\alpha}$ satisfying $\Tilde{f_0}=f$ and that solves the problem \ref{eqFGT4} for all $t\in[0,\epsilon)$. Moreover, any $\Tilde{f_t}$ is smooth.
\end{theorem}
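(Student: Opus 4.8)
The plan is to reproduce, almost verbatim, the four-step argument used for $\T^2$ in the proof of Theorem~\ref{thmA}, now taking as the parabolic replacement the modified flow \eqref{FGmT4}.

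\emph{Step 1 (parabolicity and local existence of the modified flow).} By Proposition~\ref{prop315} the linearisation at a fixed $f\in\mathcal{M}$ of the right-hand side of \eqref{FGmT4} has highest-order part $-f_*\sharp_\omega f^* L\,\iota_Y(f^{-1})^*\omega$, where $L=H_f^2 d^*d^-+dd^*$. By Theorem~\ref{thm316} the operator $L$ is elliptic; the remaining factors $-f_*\sharp_\omega f^*$ are pointwise invertible bundle maps, and $Y\mapsto\iota_Y(f^{-1})^*\omega$ is an isomorphism because $(f^{-1})^*\omega$ is non-degenerate. Hence \eqref{FGmT4} is a strongly parabolic quasilinear system on the compact manifold $\T^4$. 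The standard short-time existence and uniqueness theory for such flows in Hölder spaces (cf.\ \cite{AL00}, chapters 7 and 8) then yields, for every initial datum $f_0=f\in\mathcal{M}^{k,\alpha}$, a unique solution $\{f_t\}\subset\mathcal{M}^{k,\alpha}$ of \eqref{FGmT4} on an interval $[0,\epsilon(f))$.

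\emph{Step 2 (absorbing the DeTurck term).} Exactly as in Proposition~\ref{prop210}, one uses the $\SympB$-equivariance of $\nabla\phi$ from Proposition~\ref{prop311}(4): if $\{f_t\}$ solves \eqref{FGmT4} and $\{\varphi_t\}$ is a path in $\SympB$, then
$$
\partial_t(f_t\circ\varphi_t)=-\nabla\phi(f_t\circ\varphi_t)+(f_t)_*\bigl(W(f_t)\circ\varphi_t+\partial_t\varphi_t\bigr).
$$
One then solves the ODE $W(f_t)\circ\varphi_t+\partial_t\varphi_t=0$, $\varphi_0=\operatorname{Id}$, whose solution is the inverse flow of the time-dependent vector field $W(f_t)$. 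Since each $W(f_t)$ is Hamiltonian (Proposition~\ref{prop313}) the family $\{\varphi_t\}$ consists of Hamiltonian symplectomorphisms, and since $\T^4$ is compact this flow exists for as long as $\{f_t\}$ does and is as regular as $\{f_t\}$.

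\emph{Step 3 (return to the original flow) and Step 4 (regularity).} Setting $\tilde f_t:=f_t\circ\varphi_t$ gives $\partial_t\tilde f_t=-\nabla\phi(\tilde f_t)$, i.e.\ a solution of \eqref{eqFGT4} on $[0,\epsilon(f))$ with $\tilde f_0=f$, lying in $\mathcal{M}^{k,\alpha}$ because it is the composition of a $\mathcal{C}^{k,\alpha}$ family with a family at least as regular. Uniqueness is transferred back: any solution of \eqref{eqFGT4} with the given initial datum, composed with the flow of $-W$, produces a solution of \eqref{FGmT4} with initial datum $f$, which is unique by Step~1; running the correspondence both ways gives uniqueness for \eqref{eqFGT4}. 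Finally, elliptic (parabolic) regularity on the compact manifold $\T^4$ upgrades $\{f_t\}$ to a smooth family by bootstrapping, and since the absorbing ODE has solution as smooth as $\{f_t\}$, the composition $\tilde f_t$ is smooth as well.

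The main obstacle is Step~1: one must check that the quoted strongly-parabolic theory really applies to \eqref{FGmT4}, which is only quasilinear (the coefficient $H_{f_t}$ and the pullback $f_t^*$ depend on the unknown), is a system on sections of $f_t^*T\T^4$ rather than a scalar equation, and has principal part the twisted operator $H_f^2 d^*d^-+dd^*$ rather than the plain Hodge Laplacian. In particular the strong-ellipticity constant computed in Theorem~\ref{thm316} degenerates as $H_f\to 0$, so one has to use that $\mathcal{M}$ consists of diffeomorphisms homotopic to the identity—forcing $H_f>0$—to keep this constant bounded below along the flow on the (short) time interval. The remaining steps are formal consequences of the $\SympB$-equivariance and the compactness of $\T^4$.
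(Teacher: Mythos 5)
Your proposal is correct and follows essentially the same four-step DeTurck argument as the paper's own proof: short-time existence for the parabolic modified flow \eqref{FGmT4} via Proposition~\ref{prop315} and Theorem~\ref{thm316}, absorption of the gauge term by solving the ODE for the inverse flow of $W(f_t)$, composition to recover a solution of \eqref{eqFGT4}, and bootstrapping for smoothness. Your added remarks on transferring uniqueness back through the gauge correspondence and on the positivity of $H_f$ keeping the ellipticity constant bounded below are useful clarifications of points the paper leaves implicit, but they do not change the route.
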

\begin{proof}
The proof is decomposed into 4 pieces.\\
\noindent
\begin{enumerate}
\item The equation \eqref{FGmT4} is parabolic by proposition \ref{thm316} so the result is certainly true for this equation with initial condition $f_0=f\in\mathcal{M}^{k,\alpha}$: a unique solution $\{f_t\}$ of \eqref{FGmT4} exists on a time interval $[0,\epsilon(f))$ (usual local existence theorem for strongly parabolic flow cf \cite{AL00} chapter 7 and 8).
\item We now solve the ODE associated to the flow of $W(f_t)$ point-wise (cf \ref{def312}). Since $\T^4$ is compact, the solution admits a unique solution that exists for as long as $\{f_t\}$ exists (cf \ref{prop213}). Moreover, it is as regular as $\{f_t\}$.
\item Hence, by letting $\{\Tilde{f_t}:=f_t\circ\varphi_t\}$, we obtain a solution of \eqref{eqFGT4}. This solution exists for all $t\in[0,\epsilon(f))$ and must be as regular as the least regular of the two applications of the composition (hence as regular as $\{f_t\}$).
\item We end the discussion with the regularity. The elliptic regularity for compact manifolds implies that the family $\{f_t\}$ is smooth by bootstrapping, and since the flow of $W(f_t)$ is as regular as the family $\{f_t\}$ (smooth), it follows that $\{\Tilde{f_t}\}$ is smooth as a composition of smooth functions.
\end{enumerate}
\end{proof}
\noindent
More details about the local existence theorem can be found in my thesis \cite{LPMT}.

\section{Conclusion and perspectives}
The short time existence of the gradient flow shows the local contractibility of the group of symplectomorphisms of the two and four dimensional tori. But this was expected. The natural question that should follow is whether this flow exists for all time or not. And if not, can we use some analytical tools to capture the obstruction. The work to be accomplished is to develop a continuity method adapted to the infinite dimensional setting (more specifically the Banach manifold setting) in order to test the long time existence of the flow, or to develop a perturbation theory for those equations and study their moduli in a more algebraic geometric approach.

\end{document}